\documentclass[11pt,letterpaper]{article}

\usepackage[margin=1in]{geometry}
\usepackage{amsmath,amssymb,amsthm,mathtools}
\usepackage{graphicx}
\usepackage{microtype}
\usepackage{array,booktabs,tabularx}
\usepackage{enumitem}
\usepackage{placeins}
\usepackage[hidelinks]{hyperref}
\hypersetup{
  pdftitle={The Spectral Region of Two-Layer Renewal Stochastic Matrices},
  pdfauthor={Brecht Verbeken and Vincent Ginis},
  pdfsubject={Spectral regions of structured stochastic matrices},
  pdfkeywords={stochastic matrix, eigenvalue region, renewal matrix, Farey sequence}
}
\graphicspath{{figures/}}

\setlist[enumerate]{leftmargin=2.2em,itemsep=0.35em,topsep=0.35em}
\setlist[itemize]{leftmargin=2.0em,itemsep=0.25em,topsep=0.25em}
\allowdisplaybreaks
\newtheorem{theorem}{Theorem}[section]
\newtheorem{proposition}[theorem]{Proposition}
\newtheorem{lemma}[theorem]{Lemma}
\newtheorem{corollary}[theorem]{Corollary}

\newcommand{\C}{\mathbb C}
\newcommand{\R}{\mathbb R}
\newcommand{\Z}{\mathbb Z}
\newcommand{\D}{\mathbb D}
\newcommand{\T}{\mathbb T}
\newcommand{\conv}{\operatorname{conv}}
\newcommand{\spec}{\operatorname{spec}}
\newcommand{\aff}{\operatorname{aff}}
\newcommand{\Arg}{\operatorname{Arg}}
\newcommand{\diag}{\operatorname{diag}}
\newcommand{\e}{\mathrm e}
\newcommand{\Int}{\operatorname{int}}
\newcommand{\F}{\mathcal F}
\newcommand{\Sq}{\mathcal S_q}
\newcommand{\eps}{\varepsilon}
\newcommand{\wt}[1]{\widetilde{#1}}

\title{The Spectral Region of Two-Layer Renewal Stochastic Matrices}
\author{Brecht Verbeken\thanks{Department of Business Technology and Operations, Data Analytics Laboratory, Vrije Universiteit Brussel (VUB), Pleinlaan 2, 1050 Brussels, Belgium; imec-SMIT, Vrije Universiteit Brussel, Pleinlaan 9, 1050 Brussels, Belgium. Email: \texttt{brecht.verbeken@vub.be}.}
\and Vincent Ginis\thanks{Department of Business Technology and Operations, Data Analytics Laboratory, Vrije Universiteit Brussel (VUB), Pleinlaan 2, 1050 Brussels, Belgium; imec-SMIT, Vrije Universiteit Brussel, Pleinlaan 9, 1050 Brussels, Belgium; School of Engineering and Applied Sciences, Harvard University, Cambridge, Massachusetts 02138, USA.\@ Email: \texttt{vincent.ginis@vub.be}.}}
\date{}

\begin{document}
\maketitle

\begin{abstract}
Fix $q\ge2$ and consider the $2q\times2q$ row-stochastic matrices formed from two deterministic $q$-step paths, with randomness confined to their terminal rows. We determine the spectral union of this family exactly. An isospectral balancing map reduces the full parameter space to a $q$-simplex, while transfer geometry gives a triangular-face realization for every spectral point. The visible boundary is subtler: Farey arithmetic selects a walk on the simplex one-skeleton, whose base and lateral edges furnish the boundary carriers. In the lateral case the squared edge equation admits two algebraic square roots, but only a sector-selected branch is supporting. We prove that the selected carrier is unique and outermost on each ray and that the spectral region fills radially beneath it. The region has real section $[-1,1]$, meets the unit circle exactly at roots of unity of order at most $2q$, and omits the sector $0<|\Arg\lambda|<\pi/q$. For odd $q$, the terminal edge also contributes a negative-real boundary interval. Thus triangular faces generate the spectral union, whereas simplex edges generate its boundary.
\end{abstract}

\noindent\textbf{Key words.} stochastic matrix, eigenvalue region, renewal matrix, Karpelevi\v{c} region, Farey sequence, convex hull of powers, parameter simplex.

\medskip
\noindent\textbf{AMS subject classifications.} 15B51, 15A18, 11B57, 52A10.

\tableofcontents

\section{Introduction and main theorem}\label{sec:intro}

Throughout, $\D=\{z\in\C:|z|\le1\}$ is the closed unit disk, $\D^\circ$ its interior, and $\T=\partial\D$. For $z\in\C\setminus\{0\}$, $\Arg z\in(-\pi,\pi]$ denotes the principal argument; any local continuous determination of the argument will be introduced explicitly.

Eigenvalue-location problems for stochastic matrices have both a universal and a structured form. Universally, the eigenvalues of $n\times n$ stochastic matrices fill the Karpelevi\v{c} region $\Theta_n$ \cite{DmitrievDynkin1946,Karpelevich1951,Ito1997,JohnsonPaparella2017,KirklandLaffeySmigoc2020,KirklandSmigoc2022,MungerNickersonPaparella2024}. In Ito's formulation, its boundary arcs are described by Farey-indexed polynomial families; the modern reduced-Ito classification organizes these equations into Types~0--III \cite{Ito1997,JohnsonPaparella2017,KirklandSmigoc2022}. Kim and Kim proved regular differentiability and particular power correspondences for Karpelevi\v{c} arcs, and Joshi, Kirkland, and \v{S}migoc subsequently characterized all arc-power relations together with the corresponding question for powers of sparsest realizers \cite{KimKim2020,JoshiKirklandSmigoc2024}.

Exact descriptions are also known for selected structured classes, including Leslie matrices, cycle architectures, monotone stochastic matrices, and classes with prescribed stationary distribution \cite{Benvenuti2019,ChenLi2005,Kirkland1992,Kirkland2023,VagenendeEtAl2026Cycle,VagenendeVerbekenGuerry2026,VerbekenGinis2026Cycle}. The family studied here is a two-layer renewal architecture in which every nonterminal transition is deterministic and all randomness is confined to two rows.

Fix $q\ge 2$. The states are arranged in two directed paths
\[
 A_0\longrightarrow A_1\longrightarrow\cdots\longrightarrow A_{q-1},
 \qquad
 B_0\longrightarrow B_1\longrightarrow\cdots\longrightarrow B_{q-1}.
\]
From $A_{q-1}$ the chain returns to $A_0$ with probability $a$ or enters $B_0$ with probability $1-a$. From $B_{q-1}$ it returns to $B_0$ with probability $b$ or enters the first path at $A_j$ with probability $(1-b)p_j$, where $p=(p_0,\ldots,p_{q-1})$ is a probability vector. The two terminal rules are deliberately asymmetric: the first resets only to phase zero, whereas the second may reset to any phase of the first path. This distinction places the general phase polynomial in a single off-diagonal factor; after balancing, the equal diagonal factors yield the square transfer relation used below. Deterministic progression followed by a random terminal reset is a finite-state renewal mechanism in the standard stochastic-process sense \cite{Serfozo2009}.

Let $N_q$ be the nilpotent shift with $(N_q)_{i,i+1}=1$ for $0\le i\le q-2$, and let $e_0,\ldots,e_{q-1}$ be the standard basis of $\R^q$. Write
\[
 \Delta_{q-1}=\left\{p\in\R_{\ge0}^{q}:\sum_{j=0}^{q-1}p_j=1\right\}.
\]
For $a,b\in[0,1]$ and $p\in\Delta_{q-1}$ define
\begin{equation}\label{eq:matrix}
 B_{2q}(a,b,p)=
 \begin{pmatrix}
 N_q+a e_{q-1}e_0^{\mathsf T} & (1-a)e_{q-1}e_0^{\mathsf T}\\
 (1-b)e_{q-1}p^{\mathsf T} & N_q+b e_{q-1}e_0^{\mathsf T}
 \end{pmatrix}.
\end{equation}
The matrix is row-stochastic. Its spectral union is
\begin{equation}\label{eq:spectral-union}
 \Sigma_q=\bigcup_{a,b\in[0,1],\ p\in\Delta_{q-1}}\spec B_{2q}(a,b,p).
\end{equation}
Since every matrix in the family is stochastic of order $2q$, one has $\Sigma_q\subseteq\Theta_{2q}$.

The architecture may be viewed as a two-layer extension of a finite renewal chain: phase progression remains deterministic within each layer, while the terminal transitions allow layer-dependent persistence and a phase-dependent reset. This differs from the cycle families in \cite{VagenendeEtAl2026Cycle,VerbekenGinis2026Cycle}, which distribute stay-or-advance parameters around a cycle and lead to a two-monomial boundary geometry. Here the reset law contributes a full convex phase polynomial, and the union over all terminal laws isolates the spectral restrictions imposed by the sparse transition architecture itself.

Particular parameter edges immediately yield root-locus equations, but those equations do not identify the boundary. A squared lateral equation also carries two algebraic square-root branches. The problem is to select the visible support and branch in each angular cell and then to prove that the spectral set fills the region below the selected trace. Convex transfer geometry controls realizations, Farey arithmetic selects the carrier, and a local-to-global argument establishes visibility and filling.

Put
\begin{equation}\label{eq:simplex-vertices}
 A=B_{2q}(1,1,e_0),
 \qquad
 V_j=B_{2q}(0,0,e_j),\quad 0\le j\le q-1.
\end{equation}
The balanced matrices satisfy
\begin{equation}\label{eq:balanced-convex}
 B_{2q}(c,c,p)=cA+(1-c)\sum_{j=0}^{q-1}p_jV_j.
\end{equation}
Thus the balanced family is the $q$-simplex
\[
 \Sq=\conv\{A,V_0,\ldots,V_{q-1}\}.
\]
Section~\ref{sec:algebra} proves that the full matrix polytope retracts isospectrally onto $\Sq$. This reduction preserves the complete spectral union; it is not merely a restriction to a convenient subfamily.

The second simplification is arithmetic. Dubuc and Malik connected convex hulls of powers and trinomial root loci with Farey arithmetic \cite{DubucMalik1992}. The present proof uses a lifted statement tailored to the augmented power polygon: the Farey data must select the unique chord through $1$ that exposes that polygon, after which the supporting orientation must still determine the visible branch. Let $\F_N$ denote the Farey sequence of order $N$ and set
\begin{equation}\label{eq:positive-farey}
 \F_q^+=\F_{2q}\cap\left[\frac1{2q},\frac12\right]
 =\{f_0<f_1<\ldots<f_\nu\}.
\end{equation}
For a reduced fraction $h/k$, define its $q$-lift and residual index by
\begin{equation}\label{eq:lift}
 L_q(h/k)=k\left\lceil\frac qk\right\rceil\in\{q,q+1,\ldots,2q\},
 \qquad
 J_q(h/k)=2q-L_q(h/k).
\end{equation}
Encode each Farey endpoint as a vertex of the balanced simplex by
\begin{equation}\label{eq:vertex-map}
 \vartheta_q(h/k)=
 \begin{cases}
 *,&L_q(h/k)=q,\\
 J_q(h/k),&L_q(h/k)>q,
 \end{cases}
 \qquad W_*=A,\quad W_j=V_j.
\end{equation}
If $\zeta_h=\e^{2\pi i h/k}$, then $\zeta_h$ is an eigenvalue of $W_{\vartheta_q(h/k)}$. Indeed, $L_q(h/k)=q$ gives $\zeta_h^q=1$, whereas $L_q(h/k)>q$ gives
\[
 \zeta_h^{2q}=\zeta_h^{J_q(h/k)}.
\]
Consecutive endpoints cannot both have label $*$: their denominators would both divide $q$, while consecutive fractions in $\F_{2q}$ have denominator sum greater than $2q$.

For consecutive $f<g$ in $\F_q^+$, define the parameter edge
\begin{equation}\label{eq:farey-edge}
 E_{f,g}=[W_{\vartheta_q(f)},W_{\vartheta_q(g)}]\subset\Sq.
\end{equation}
There are two types.

If both labels are base vertices, write $j=J_q(f)$ and $k=J_q(g)$. The edge matrices are
\[
 (1-t)V_j+tV_k=B_{2q}\bigl(0,0,(1-t)e_j+te_k\bigr),
 \qquad 0\le t\le1,
\]
and their characteristic equation is
\begin{equation}\label{eq:base-edge}
 \lambda^{2q}=(1-t)\lambda^j+t\lambda^k.
\end{equation}
We call this a \emph{base edge} or \emph{renewal edge}.

If one label is $*$ and the other is the base vertex $j$, the edge matrices are
\[
 cA+(1-c)V_j=B_{2q}(c,c,e_j),
 \qquad 0\le c\le1,
\]
and their characteristic equation is
\begin{equation}\label{eq:lateral-squared}
 (\lambda^q-c)^2=(1-c)^2\lambda^j.
\end{equation}
We call this a \emph{lateral edge} or \emph{one-site edge}. Its sector-selected square-root branch is part of the boundary equation; the opposite branch is generally extraneous.

We use \emph{base} and \emph{lateral} when referring to the geometry of $\Sq$, and \emph{renewal} and \emph{one-site} when emphasizing the corresponding contact mechanism.

\paragraph{Relation with reduced Ito equations.}
The two edge equations have familiar algebraic forms, although their role here is different from that of the boundary equations for the ambient Karpelevi\v{c} region. For a base edge, relabel the support as $j_-<j_+$ and write
\[
 \lambda^{2q}=(1-\tau)\lambda^{j_-}+\tau\lambda^{j_+}.
\]
After removing the zero factor $\lambda^{j_-}$, the nonzero roots satisfy
\[
 \lambda^{n_{\mathrm I}}
 -\beta_{\mathrm I}\lambda^{n_{\mathrm I}-r_{\mathrm I}}
 -\alpha_{\mathrm I}=0,
 \qquad
 \begin{aligned}
  n_{\mathrm I}&=2q-j_-,& r_{\mathrm I}&=2q-j_+,\\
  \alpha_{\mathrm I}&=1-\tau,& \beta_{\mathrm I}&=\tau.
 \end{aligned}
\]
Since $r_{\mathrm I}>n_{\mathrm I}/2$, this is the Type~I reduced-Ito form. For a non-elementary lateral carrier, $1\le j<q$. If $L=2q-j$, $\alpha_{\mathrm{II}}=1-c$, and $\beta_{\mathrm{II}}=c$, then its equation is
\[
 \bigl(\lambda^q-\beta_{\mathrm{II}}\bigr)^2
 -\alpha_{\mathrm{II}}^2\lambda^{2q-L}=0,
\]
the Type~II form with $d=2$ \cite{Ito1997,JohnsonPaparella2017,KirklandSmigoc2022}.

There is a necessary primitivity qualification. The base equation is a classical reduced Ito polynomial for the exponent pair $(n_{\mathrm I},r_{\mathrm I})$ when $\gcd(n_{\mathrm I},r_{\mathrm I})=1$; if the gcd is $g>1$, it is instead a power pullback $f(\lambda)=F(\lambda^g)$ of the corresponding primitive Type~I polynomial. The same distinction holds laterally with $g=\gcd(q,L)=\gcd(q,j)$. Such power relations belong to the setting studied in \cite{KimKim2020,JoshiKirklandSmigoc2024}, but those results are not used below. The structured edge equation agrees with the ambient $\Theta_{2q}$ equation for the same Farey cell when the smaller endpoint denominator exceeds $q$ in the base case, or equals $q$ in the lateral case. When that denominator is below $q$, the lift $L_q$ generally changes the exponents. Thus the reduced-Ito correspondence supplies algebraic context, not the visibility theorem: support selection, the lateral branch, outermostness, and radial filling are proved independently here.

The ordered list
\begin{equation}\label{eq:carrier-word}
 \mathcal W_q=\bigl(\vartheta_q(f_0),\vartheta_q(f_1),\ldots,\vartheta_q(f_\nu)\bigr)
\end{equation}
is the \emph{Farey carrier word}. It is the combinatorial skeleton of the visible boundary.

\subsection{Operational selected-radius formulas}\label{subsec:radius-formulas}

Let $f=h/k<g=r/s$ be consecutive elements of $\F_q^+$ and let
\[
 \lambda=\rho\e^{2\pi i x},
 \qquad x\in(f,g),\quad 0<\rho<1.
\]
The selected radius $\rho_{f,g}(x)$ is determined by one strictly monotone scalar equation.

\paragraph{Base-edge cells.}
Assume $M=L_q(f)>q$ and $N=L_q(g)>q$. Write $M=uk$ and $N=vs$, and set
\begin{equation}\label{eq:base-angles}
 \alpha=2\pi u(kx-h),
 \qquad
 \beta=2\pi v(r-sx).
\end{equation}
Then $\alpha,\beta>0$ and $\alpha+\beta<\pi$. The selected radius is the unique solution $\rho\in(0,1)$ of
\begin{equation}\label{eq:base-radius}
 \rho^N\sin\alpha+\rho^M\sin\beta=\sin(\alpha+\beta).
\end{equation}
The edge parameter is
\begin{equation}\label{eq:base-parameter}
 t=\frac{\rho^N\sin\alpha}{\sin(\alpha+\beta)}\in(0,1),
\end{equation}
and \eqref{eq:base-edge} holds with $j=2q-M$ and $k=2q-N$.

\paragraph{Lateral-edge cells.}
Assume exactly one endpoint has lift $q$. Write that endpoint as $h/k$, with $k\mid q$, and write the other endpoint as $\ell/L$, where $q<L<2q$. The strict upper bound follows directly from Farey adjacency: if $L=2q$, then $|\ell k-2qh|=1$ and $k\mid q$ force $k=1$, which is impossible for an endpoint of $\F_q^+$. Put
\[
 m=\frac{qh}{k}\in\Z,
 \qquad
 J=2q-L,
\]
and choose
\[
 \eps=
 \begin{cases}
 +1,&h/k<x<\ell/L,\\
 -1,&\ell/L<x<h/k.
 \end{cases}
\]
Define
\begin{equation}\label{eq:lateral-angles}
 \alpha=2\pi\eps(qx-m),
 \qquad
 \beta=\pi\eps(\ell-Lx).
\end{equation}
Again $\alpha,\beta>0$ and $\alpha+\beta<\pi$. The selected radius is the unique solution $\rho\in(0,1)$ of
\begin{equation}\label{eq:lateral-radius}
 \rho^{L/2}\sin\alpha+\rho^q\sin\beta=\sin(\alpha+\beta).
\end{equation}
The selected square root of $\lambda^J$ is
\begin{equation}\label{eq:selected-root}
 \xi_{\ell,L}(\lambda)=(-1)^\ell\rho^{J/2}\e^{\pi iJx},
 \qquad
 \xi_{\ell,L}(\lambda)^2=\lambda^J,
\end{equation}
and the edge parameter is
\begin{equation}\label{eq:lateral-parameter}
 c=\frac{\lambda^q-\xi_{\ell,L}(\lambda)}{1-\xi_{\ell,L}(\lambda)}\in(0,1).
\end{equation}
Thus the branch-specific equation is
\begin{equation}\label{eq:lateral-branch}
 \lambda^q-c=(1-c)\xi_{\ell,L}(\lambda).
\end{equation}

For every open Farey cell, set $\varrho_q(x)=\rho_{f,g}(x)$. At every internal endpoint of $\F_q^+$ below $1/2$, set $\varrho_q(x)=1$, and also set $\varrho_q(1/(2q))=1$. The terminal point $x=1/2$ is kept separate because, for odd $q$, the selected complex carrier approaches a radius strictly below one.

For odd $q$, let $\rho_q\in(0,1)$ be the unique zero of
\begin{equation}\label{eq:terminal-rho}
 (q-2)-(2q-1)\rho^{q+1}-(q+1)\rho^{2q-1}=0.
\end{equation}

\begin{theorem}[Spectral region and simplicial realization]\label{thm:main}
For every $q\ge2$,
\begin{equation}\label{eq:main-region}
 \Sigma_q\cap\{\Im\lambda\ge0\}
 =[-1,1]\cup
 \left\{\rho\e^{2\pi ix}:\frac1{2q}\le x<\frac12,\ 0<\rho\le\varrho_q(x)\right\}.
\end{equation}
The lower half is obtained by complex conjugation. In particular, $\Sigma_q$ is compact, conjugation invariant, and radially filled.

Let
\[
 C_q^+=\left\{\varrho_q(x)\e^{2\pi ix}:\frac1{2q}<x<\frac12\right\}.
\]
Then
\begin{equation}\label{eq:main-boundary}
 \partial\Sigma_q\cap\{\Im\lambda\ge0\}
 =[0,1]
 \cup\left\{\rho\e^{i\pi/q}:0\le\rho\le1\right\}
 \cup C_q^+\cup E_q,
\end{equation}
where
\begin{equation}\label{eq:terminal-boundary-set}
 E_q=
 \begin{cases}
 \{-1\},&q\ \text{even},\\
 [-1,-\rho_q],&q\ \text{odd}.
 \end{cases}
\end{equation}
For even $q$, the terminal complex carrier ends at $-1$. For odd $q$, it ends at $-\rho_q$, and the remaining interval $[-1,-\rho_q]$ is a second visible branch of the same terminal base-edge root locus.

Moreover,
\begin{equation}\label{eq:real-unit-sections}
 \Sigma_q\cap\R=[-1,1],
 \qquad
 \Sigma_q\cap\T=\left\{\zeta:\zeta^m=1\ \text{for some }1\le m\le2q\right\},
\end{equation}
and
\begin{equation}\label{eq:sector-gap}
 0<|\Arg\lambda|<\frac\pi q
 \quad\Longrightarrow\quad
 \lambda\notin\Sigma_q.
\end{equation}
Every point of $\Sigma_q$ has a realizing matrix on some triangular face $\conv\{A,V_j,V_k\}$ of $\Sq$, and every point of $\partial\Sigma_q$ has a realizing matrix on an edge of $\Sq$.
\end{theorem}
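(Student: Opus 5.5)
The proof proceeds in four stages: algebraic reduction, interior realization, boundary identification, and filling.

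\emph{Reduction and characteristic polynomial.} Eliminating the two deterministic paths gives the characteristic equation of $B_{2q}(a,b,p)$:
\[
 (\lambda^q-a)(\lambda^q-b)=(1-a)(1-b)P(\lambda),
 \qquad P(\lambda)=\sum_j p_j\lambda^j .
\]
I would then exhibit the balancing map $(a,b,p)\mapsto(c,c,p')$, with $p'$ a convex combination of $p$ and $e_0$ chosen so that the polynomial is unchanged. This uses the factorization
\[
 (\lambda^q-a)(\lambda^q-b)-(1-a)(1-b)P
 =(\lambda^q-c)^2-(1-c)^2P' ,
\]
where $c$ is fixed by matching the constant and $\lambda^q$ coefficients. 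Matching makes $\lambda^{2q}$ and $\lambda^q$ agree; the $\lambda^q$ term is absorbed by choosing $c$, and the $\lambda^0$ mass goes into $p'_0$. A short computation must confirm that $p'$ is nonnegative. This retracts $\Sigma_q$ onto the spectra of $\Sq$. On $\Sq$ the transfer relation reads
\[
 \Bigl(\frac{\lambda^q-c}{1-c}\Bigr)^2=P(\lambda).
\]
So $\lambda\in\Sigma_q$ if and only if some square root $w$ of an element of $\conv\{1,\lambda,\dots,\lambda^{q-1}\}$ satisfies $\lambda^q=c+(1-c)w$, that is, $\lambda^q\in[w,1]$. Equivalently, $\lambda^q$ lies in the union over $P$ of the segments joining $1$ to $\pm\sqrt{P(\lambda)}$. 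Carathéodory in the plane reduces $P$ to two monomials plus the segment to $1$ (the vertex $A$). This yields the triangular-face realization $\conv\{A,V_j,V_k\}$ for every spectral point.

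\emph{Sections and sector gap.} Real and unit-circle sections follow from this criterion. For $|\lambda|=1$, the relation $\lambda^q\in[w,1]$ with $|w|\le1$ forces $w=\lambda^q$ extreme, so $\lambda^{2q}=\lambda^j$ for some $j$. This gives roots of unity of order at most $2q$, and conversely each of them is realized at a vertex. Real points are realized on the edges $[A,V_0]$ and $[V_0,V_1]$, using $\lambda^{2q}=(1-t)+t\lambda$ for negative values. For the sector gap $0<\theta<\pi/q$, where $\lambda=\rho e^{i\theta}$, the point $\lambda^q$ has argument in $(0,\pi)$. Meanwhile every $\pm\sqrt{P}$ lies in the cone spanned by $\lambda^{j/2}$ and its negative, and $[w,1]$ cannot reach $\lambda^q$ because $\arg\lambda^{j/2}<q\theta$. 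I would make this a supporting-line argument, with the line through $1$ in direction $e^{iq\theta}$ separating $\lambda^q$ from the hull.

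\emph{Boundary via Farey carriers.} For fixed argument $x$, define $R(x)$ as the supremum of the admissible $\rho$. I would show that the admissible set is downward closed in $\rho$ (radial filling) by a continuity and degree argument: spectra vary continuously along the segment from a matrix realizing $\rho e^{2\pi ix}$ to $V$-type matrices whose spectra shrink toward $0$. Instead, I would argue directly that the criterion, as a convex-hull containment, is monotone under $\rho\mapsto s\rho$ after rescaling. At $R(x)$ the point is extreme, so $\lambda^q$, or equivalently $\lambda^{2q}$ in the base case, must lie on a supporting chord of the augmented power polygon. Exposure forces that chord to pass through $1$ or to join two powers. The Dubuc--Malik Farey lemma, lifted by $L_q$, identifies the exposed chord within the cell $(f,g)$ as $E_{f,g}$. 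Solving the real and imaginary parts of the edge equation then yields \eqref{eq:base-radius} and \eqref{eq:lateral-radius}. Their left sides are strictly increasing in $\rho$, equal to $0$ at $\rho=0$, and exceed $\sin(\alpha+\beta)$ at $\rho=1$, which gives uniqueness. The branch $\xi_{\ell,L}$ is the root whose segment to $1$ points into the correct half-plane determined by $\eps$; the opposite branch gives $c\notin(0,1)$ or lies strictly inside. At the terminal point $x=1/2$ with $q$ odd, I would analyze the real locus $\lambda^{2q}=(1-t)\lambda^{j}+t\lambda^{k}$ near $\lambda=-\rho$. The double-root condition, obtained by eliminating $t$ from $f=f'=0$, gives \eqref{eq:terminal-rho} and separates the complex branch from the interval $[-1,-\rho_q]$.

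\emph{Main obstacle.} The hardest step is proving that the selected carrier is outermost on each ray, meaning no other face or branch produces a larger modulus. I would first fix $x$ and show that the function $\rho\mapsto$ (distance of $\lambda^q$ beyond the support line of the relevant hull) changes sign exactly once. I would then compare against all other edges using the Farey adjacency determinant $|hs-rk|=1$, which prevents any other power from lying outside the chord. Boundary equality \eqref{eq:main-boundary} then follows from radial filling together with the sector gap, which supplies the segment $\rho e^{i\pi/q}$ and $[0,1]$.
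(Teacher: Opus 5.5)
Your outline follows the paper's architecture: balancing, transfer criterion, Farey carriers, filling. Several pieces are sound: the balancing computation, the real and unit-circle sections, the cone argument for the sector gap, and the monotone scalar radius equations. Your terminal computation is also valid: eliminating $t$ from $f(-r)=f'(-r)=0$ on $V_1V_{q-1}$ does reproduce \eqref{eq:terminal-rho}. That is a legitimate alternative to the paper's limit of the chord equation, though you still must show the conjugate pair on the selected carrier collides there.

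\textbf{Two-face step and boundary selection.} The two-face step is wrong as stated. Planar Carath\'eodory gives \emph{three} powers, hence only a face $\conv\{A,V_i,V_j,V_k\}$. The missing idea is maximal transfer: take the largest $s\in[1,s^*(\lambda)]$ with $\Gamma_\lambda(s)\in K_q(\lambda)$. If $s<s^*$, the point lies on $\partial K_q(\lambda)$, because an interior point would persist for larger $s$. If $s=s^*$, the point has modulus one and so is a vertex. Your boundary step needs this same maximality, together with a persistence lemma: a strict interior intersection with $s<s^*$ makes $\lambda$ interior to $\Sigma_q$. These show that a visible contact is one of three kinds:
\begin{itemize}
\item two-site at $s=1$, giving a base edge;
\item one-site at a vertex with $1<s<s^*$, whose parabola tangent supports $K_q(\lambda)$;
\item a point on a first ray.
\end{itemize}
Two-site contacts at interior $s$ are excluded because tangency to an edge either yields interior intersections or forces $\lambda^q\in\R$, a case ruled out separately. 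In the one-site case the supporting line passes through $\xi$ and $\xi^2=\lambda^j$, where $\xi$ is a signed half-integral power. So the integer-power Farey (Dubuc--Malik) lemma does not apply. In reciprocal coordinates one needs the line through $1$, $z^q$ and $v$ (with $v^2=z^L$) to support $\conv\{z^m/v\}$. The paper settles this with the half-parallelogram lattice trichotomy, after excluding the edge-aligned contact $\xi=\lambda^k$ by a second-order argument. Without this, nothing forces $j=2q-L$ or ties lateral cells to lift-$q$ endpoints.

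\textbf{Radial filling.} This is the most serious gap. Eigenvalue paths obtained by continuity need not stay on the ray. The claimed monotonicity of the criterion under $\lambda\mapsto r\lambda$ has no mechanism behind it: the vertices $\lambda^j$ of $K_q(\lambda)$ rescale by different factors $r^j$, and the transfer parabola also changes. Even for $q=2$ the paper needs an explicit polynomial identity. In the paper, filling is an output of the boundary classification, not an input:
\begin{itemize}
\item Compactness, boundary exhaustion and raywise uniqueness give outermostness directly. Your outermostness step needs exactly this: comparing against other edges via the integer-power determinant ignores triangle interiors and the half-integral one-site contacts.
\item Active-face transversality shows the inward side of the carrier is locally spectral. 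The active constraint is $s\ge1$ on a base edge, or $\tau\ge0$ for an auxiliary second site on a lateral edge, and it has nonzero radial derivative.
\item The open region below the carrier in a Farey cell contains no boundary point, so a connectedness argument fills it.
\item The two-sided endpoint lemma glues adjacent cells. It yields the interior segments below internal endpoints and the odd terminal boundary interval $[-1,-\rho_q]$.
\end{itemize}
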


Table~\ref{tab:carrier-dictionary} summarizes the correspondence between Farey data, simplex geometry, and the selected boundary carrier.

\begin{table}[htbp]
\centering
\caption{Dictionary for the selected carriers and their endpoint behavior.}
\label{tab:carrier-dictionary}
\footnotesize
\renewcommand{\arraystretch}{1.18}
\setlength{\tabcolsep}{3pt}
\begin{tabularx}{\textwidth}{
  @{}
  >{\raggedright\arraybackslash}p{0.11\textwidth}
  >{\raggedright\arraybackslash}p{0.095\textwidth}
  >{\raggedright\arraybackslash}p{0.14\textwidth}
  >{\raggedright\arraybackslash}X
  >{\raggedright\arraybackslash}p{0.15\textwidth}
  >{\raggedright\arraybackslash}p{0.18\textwidth}
  @{}
}
\toprule
Farey cell or endpoint & Endpoint lifts & Simplex edge & Edge polynomial & Selected branch & Endpoint behavior \\
\midrule
Open base cell $(f,g)$
& $M,N>q$
& $[V_{2q-M},V_{2q-N}]$
& Base equation \eqref{eq:base-edge}
& Unique radius \eqref{eq:base-radius}; no square-root choice
& Radius $\to1$ internally; the odd terminal case is listed below. \\
\addlinespace
Open lateral cell $(f,g)$
& $\{q,L\}$, $q<L<2q$
& $[A,V_{2q-L}]$
& Squared equation \eqref{eq:lateral-squared}
& Sector-selected root \eqref{eq:selected-root} and branch \eqref{eq:lateral-branch}
& Radius $\to1$ internally; the even terminal case is listed below. \\
\addlinespace
Internal Farey endpoint $f$
& $L_q(f)$
& Shared vertex $W_{\vartheta_q(f)}$
& $\lambda=\e^{2\pi if}$
& Adjacent selected branches coincide
& Both adjacent radial sections close at radius $1$. \\
\addlinespace
Terminal endpoint, $q$ even
& $q,\ 2q-1$
& $[A,V_1]$
& Terminal case of \eqref{eq:lateral-squared}
& Selected lateral branch reaches $A$
& No additional negative-real boundary interval occurs. \\
\addlinespace
Terminal endpoint, $q$ odd
& $q+1,\ 2q-1$
& Base edge $[V_1,V_{q-1}]$
& Terminal case of \eqref{eq:base-edge}
& Nonreal branch ends at $-\rho_q$; the same root locus has a real branch
& The negative-real interval is a second visible branch of the terminal edge locus. \\
\bottomrule
\end{tabularx}
\end{table}

The positive-real segment and the first rays are the elementary boundary pieces carried by $AV_0$; they are not indexed by open Farey cells.

\begin{figure}[htbp]
\centering
\includegraphics[width=0.88\textwidth]{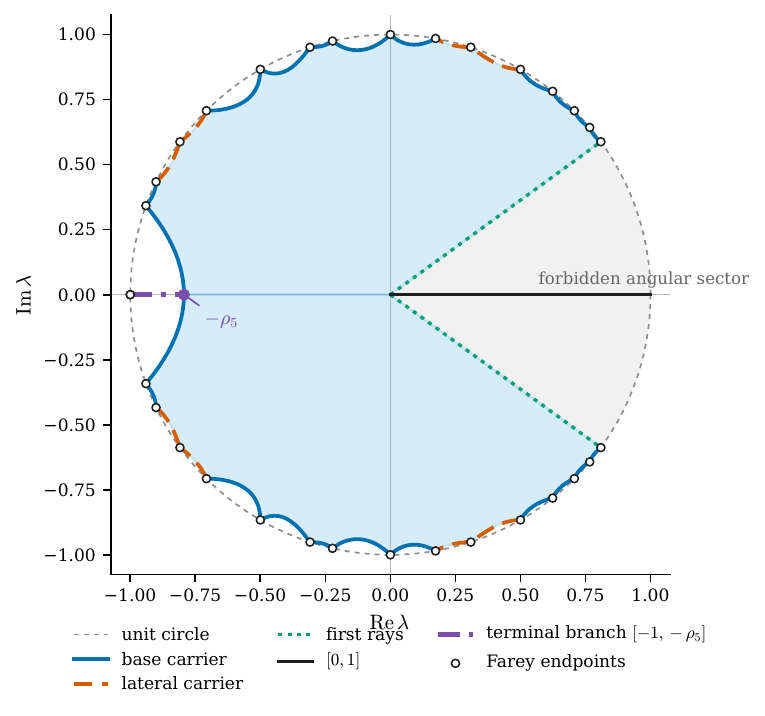}
\caption{The spectral region for $q=5$, defined cell by cell by \eqref{eq:base-radius} and \eqref{eq:lateral-radius}. Solid and dashed traces distinguish base and lateral carriers, and the marked unit points are the Farey endpoints. The first rays bound the forbidden angular sector at the positive real axis. Because $q$ is odd, the nonreal terminal carrier ends at $-\rho_5=-2^{-1/3}$, while the same edge $V_1V_4$ continues along the boundary interval $[-1,-\rho_5]$. The shaded set indicates radial filling beneath the selected carrier.}
\label{fig:exact-region-q5}
\end{figure}
\FloatBarrier

Figure~\ref{fig:exact-region-q5} depicts the carrier equations in the main theorem. In particular, it shows why the odd terminal interval must be retained even though the nonreal carrier stops before reaching $-1$.

The elementary boundary pieces have explicit realizers. For $0\le r\le1$, the positive-real point $r$ is realized by $B_{2q}(c_r,c_r,e_0)$ with $c_r=(1+r^q)/2$, and the first-ray points $r\e^{\pm i\pi/q}$ are realized by $B_{2q}(c,c,e_0)$ with $c=(1-r^q)/2$. Base-edge and lateral-edge carrier points are realized by the matrices in \eqref{eq:base-edge} and \eqref{eq:lateral-branch}. If $q$ is odd and $\lambda=-r$, the negative boundary segment is realized on $V_1V_{q-1}$ with
\begin{equation}\label{eq:negative-parameter-intro}
 t(r)=\frac{1+r^{2q-1}}{1+r^{q-2}}.
\end{equation}
If $q$ is even, the isolated negative boundary point $-1$ is realized by the apex $A$.

The proof has three stages. First, the product parameter polytope retracts isospectrally onto the balanced simplex, after which transfer geometry proves two-face generation. Second, Farey arithmetic selects the only edge root locus that can support the boundary in each open cell. Third, active-face transversality identifies the inward spectral side locally, and connectedness propagates that information across the entire cell.

\section{Algebraic reduction and two-face generation}\label{sec:algebra}

\subsection{Characteristic polynomial}

For $p\in\Delta_{q-1}$ write
\begin{equation}\label{eq:Pp}
 P_p(x)=\sum_{j=0}^{q-1}p_jx^j.
\end{equation}

\begin{proposition}[Terminal system and characteristic polynomial]\label{prop:charpoly}
For $a,b\in[0,1]$ and $p\in\Delta_{q-1}$,
\begin{equation}\label{eq:charpoly}
 \det\bigl(xI-B_{2q}(a,b,p)\bigr)
 =(x^q-a)(x^q-b)-(1-a)(1-b)P_p(x).
\end{equation}
Equivalently, $\lambda$ is an eigenvalue if and only if
\begin{equation}\label{eq:terminal-system}
 \begin{pmatrix}
 \lambda^q-a&-(1-a)\\
 -(1-b)P_p(\lambda)&\lambda^q-b
 \end{pmatrix}
 \binom uv=0
\end{equation}
for some $(u,v)\ne(0,0)$.
\end{proposition}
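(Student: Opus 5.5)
I plan to compute the determinant by reducing the eigenvalue equation $B_{2q}(a,b,p)w=\lambda w$ to a $2\times 2$ system on the terminal coordinates. The path structure makes every nonterminal row deterministic, so most coordinates of an eigenvector are forced by a single scalar per block.

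Write $w=(y_0,\dots,y_{q-1},z_0,\dots,z_{q-1})$. For $0\le i\le q-2$, row $A_i$ gives $y_{i+1}=\lambda y_i$, so $y_i=\lambda^i y_0$. In the same way row $B_i$ gives $z_i=\lambda^i z_0$. Row $A_{q-1}$ gives $a y_0+(1-a)z_0=\lambda y_{q-1}=\lambda^q y_0$. Row $B_{q-1}$ gives $(1-b)\sum_j p_j y_j+b z_0=\lambda^q z_0$, and since $y_j=\lambda^j y_0$ this becomes $(1-b)P_p(\lambda)y_0+bz_0=\lambda^q z_0$. With $(u,v)=(y_0,z_0)$ these two terminal equations are exactly \eqref{eq:terminal-system}.

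Conversely, any nonzero $(u,v)$ solving \eqref{eq:terminal-system} reconstructs a nonzero eigenvector through $y_i=\lambda^i u$ and $z_i=\lambda^i v$. Both directions hold for every $\lambda\in\C$, including $\lambda=0$. Hence $\lambda$ is an eigenvalue if and only if the $2\times2$ determinant $(\lambda^q-a)(\lambda^q-b)-(1-a)(1-b)P_p(\lambda)$ vanishes. This proves the equivalence part of Proposition~\ref{prop:charpoly}.

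Matching zero sets does not by itself identify the characteristic polynomial, since multiplicities could differ. To get the polynomial identity \eqref{eq:charpoly}, I would do a Schur-complement computation on $xI-B_{2q}$.

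For each $0\le i\le q-2$, add $x$ times column $i+1$ to column $i$, working downward in index. This is a unimodular column operation, so it preserves the determinant. Do the same within the $B$ block.

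After this elimination the matrix becomes block-triangular up to a signed permutation. Each nonterminal row $A_i$ with $i\le q-2$ retains a single entry $-1$ in column $i+1$, and the same holds in the $B$ block. The terminal rows become the entries of the $2\times2$ matrix in \eqref{eq:terminal-system}, placed in columns $A_0$ and $B_0$, with $x$ in place of $\lambda$.

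Expanding along the $2q-2$ rows that each contain a single $-1$ leaves the $2\times2$ determinant times a sign. A quicker way to fix that sign is degree comparison. Both sides are monic of degree $2q$, and the $2\times2$ determinant has leading term $x^{2q}$, so the sign is $+1$. This yields \eqref{eq:charpoly}.

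The main obstacle is the sign and permutation bookkeeping in this elimination, and the degree argument disposes of it. Alternatively, one could first verify \eqref{eq:charpoly} for generic parameters where the roots are simple, so that equal zero sets and equal degree give equality, and then extend to all parameters by continuity of both sides in $(a,b,p)$.
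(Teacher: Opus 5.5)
Your proposal is correct. The eigenvector reduction to the terminal system is the same as in the paper, but you obtain the determinant identity by a different route.

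The paper writes $xI-B_{2q}(a,b,p)$ as $\diag(D,D)$ minus a rank-two terminal correction, with $D=xI_q-N_q$. It then applies the Schur-complement determinant formula using $\det D=x^q$, $e_0^{\mathsf T}D^{-1}e_{q-1}=x^{-q}$ and $p^{\mathsf T}D^{-1}e_{q-1}=x^{-q}P_p(x)$. This requires $x\ne0$, and the case $x=0$ is recovered because both sides are monic polynomials of degree $2q$.

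Your downward column sweep has net effect $\mathrm{col}_{A_0}\mapsto\sum_k x^k\,\mathrm{col}_{A_k}$ on column $A_0$, and likewise for $B_0$. This is the matrix form of the ansatz $y_i=x^iy_0$, $z_i=x^iz_0$. The sweep is unimodular over $\C[x]$, so it gives the identity directly in $\C[x]$, with no division and no extension step. The cost is a permutation sign. Your leading-coefficient comparison settles that sign correctly, because it is independent of $x$.

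In short, the paper's route is sign-free and shorter to write, while yours is division-free and makes the eigenvector--determinant link explicit. Your fallback via generic simple roots and continuity is valid but unnecessary.

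One point of precision: after the sweep, the terminal rows are not supported only in columns $A_0,B_0$. For example, row $A_{q-1}$ acquires the entry $x^{q-m}$ in column $A_m$ for $1\le m\le q-1$, and row $B_{q-1}$ acquires $-(1-b)\sum_{k\ge m}p_kx^{k-m}$ there. These entries lie in the columns deleted when you expand along the $2q-2$ single-entry rows; they form the off-diagonal block of your block-triangular form. So your conclusion is unaffected.
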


\begin{proof}
Let $z$ be a right eigenvector and put $u=z_{A_0}$ and $v=z_{B_0}$. Along the deterministic portions of the two paths,
\[
 z_{A_i}=\lambda^i u,
 \qquad
 z_{B_i}=\lambda^i v,
 \qquad 0\le i\le q-1.
\]
The two terminal equations are exactly \eqref{eq:terminal-system}. Conversely, a nonzero solution of that system reconstructs a nonzero eigenvector by the displayed formulas.

For the determinant, first assume $x\ne0$ and put $D=xI_q-N_q$. Then
\[
 \det D=x^q,
 \qquad
 e_0^{\mathsf T}D^{-1}e_{q-1}=x^{-q},
 \qquad
 p^{\mathsf T}D^{-1}e_{q-1}=x^{-q}P_p(x).
\]
Taking the Schur complement of $\diag(D,D)$ gives
\[
 \det(xI-B_{2q})
 =x^{2q}
 \det\begin{pmatrix}
 1-ax^{-q}&-(1-a)x^{-q}\\
 -(1-b)x^{-q}P_p(x)&1-bx^{-q}
 \end{pmatrix},
\]
which is \eqref{eq:charpoly} for $x\ne0$. Both sides are monic polynomials of degree $2q$, so the identity holds in $\C[x]$ and therefore also at $x=0$.
\end{proof}

\subsection{The product polytope and the balanced simplex}

The first terminal row ranges over a segment. The second terminal row ranges over the simplex whose vertices correspond to reset to $B_0$ and reset to one of the sites $A_j$. Since all other rows are fixed, the full matrix family is affinely isomorphic to a segment times a $q$-simplex. The balanced family is itself a simplex.

\begin{proposition}[Balanced parameter simplex]\label{prop:simplex}
The matrices $A,V_0,\ldots,V_{q-1}$ are affinely independent, and
\begin{equation}\label{eq:simplex}
 \Sq=\{B_{2q}(c,c,p):c\in[0,1],\ p\in\Delta_{q-1}\}
 =\conv\{A,V_0,\ldots,V_{q-1}\}.
\end{equation}
The barycentric coordinates are
\begin{equation}\label{eq:barycentric}
 \omega_*=c,
 \qquad
 \omega_j=(1-c)p_j,
 \qquad
 \omega_*+\sum_{j=0}^{q-1}\omega_j=1.
\end{equation}
In these coordinates the characteristic polynomial is
\begin{equation}\label{eq:bary-charpoly}
 \chi_\omega(x)
 =(x^q-\omega_*)^2-(1-\omega_*)\sum_{j=0}^{q-1}\omega_j x^j.
\end{equation}
\end{proposition}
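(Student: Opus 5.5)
The proof has three parts: affine independence, the identification of the balanced family with the convex hull together with its barycentric coordinates, and the characteristic polynomial.

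\emph{Affine independence.} The matrices differ only in their two terminal rows, namely the $A_{q-1}$ row and the $B_{q-1}$ row. So I would identify each matrix with its pair of terminal rows.
\begin{itemize}
\item $A$ has the $A_{q-1}$ row equal to $e_{A_0}$ and the $B_{q-1}$ row equal to $e_{B_0}$.
\item $V_j$ has the $A_{q-1}$ row equal to $e_{B_0}$ and the $B_{q-1}$ row equal to $e_{A_j}$.
\end{itemize}
Suppose $\mu_*A+\sum_j\mu_jV_j=0$ with $\mu_*+\sum_j\mu_j=0$. Reading the $(B_{q-1},A_j)$ entries gives $\mu_j=0$ for every $j$. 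The constraint $\mu_*+\sum_j\mu_j=0$ then forces $\mu_*=0$. This proves affine independence.

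\emph{The balanced family equals the convex hull.} For one inclusion, take $B_{2q}(c,c,p)$. By \eqref{eq:balanced-convex} it equals $cA+(1-c)\sum_jp_jV_j$. The weights $\omega_*=c$ and $\omega_j=(1-c)p_j$ are nonnegative and sum to $c+(1-c)=1$. So $B_{2q}(c,c,p)\in\conv\{A,V_0,\ldots,V_{q-1}\}$ with exactly the weights \eqref{eq:barycentric}.

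For the converse inclusion, take any $\omega$ in the standard $q$-simplex and set $c=\omega_*$.
\begin{itemize}
\item If $c<1$, put $p_j=\omega_j/(1-c)$. This is a probability vector, and by \eqref{eq:balanced-convex} the corresponding convex combination equals $B_{2q}(c,c,p)$.
\item If $c=1$, the combination is $A$, which equals $B_{2q}(1,1,p)$ for any $p$.
\end{itemize}
The two sets therefore coincide. By affine independence, the barycentric coordinates are unique and are the ones in \eqref{eq:barycentric}.

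\emph{Characteristic polynomial.} Apply Proposition~\ref{prop:charpoly} with $a=b=c$:
\[
\chi(x)=(x^q-c)^2-(1-c)^2P_p(x).
\]
Now substitute $c=\omega_*$, and use $(1-c)P_p(x)=\sum_j\omega_jx^j$ from $\omega_j=(1-c)p_j$. This turns $(1-c)^2P_p(x)$ into $(1-\omega_*)\sum_j\omega_jx^j$, which is \eqref{eq:bary-charpoly}.

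At $c=1$ the formula still holds, since $1-\omega_*=0$ and $\omega_j=0$ for all $j$. So there is no ambiguity from the fact that $p$ is not determined when $c=1$.

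The only point needing care is this degenerate apex $c=1$, where $p$ is undetermined. It is handled by the convention above; everything else is direct verification.
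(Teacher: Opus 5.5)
Your proof is correct and follows essentially the same route as the paper. Affine independence comes from the reset entries $(B_{q-1},A_j)$, which vanish for $A$ and single out $V_j$; the two families are identified via \eqref{eq:balanced-convex}; and the characteristic polynomial follows by setting $a=b=c$ in Proposition~\ref{prop:charpoly} and using $(1-c)^2P_p(x)=(1-\omega_*)\sum_j\omega_jx^j$. The paper verifies \eqref{eq:balanced-convex} by a four-block comparison, which you cite rather than check, while you spell out the reverse inclusion and the apex case $c=1$, which the paper leaves implicit.
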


\begin{proof}
Identity \eqref{eq:balanced-convex} follows by comparing the four blocks. The lower-left terminal row of $V_j$ has its only nonzero reset entry at $A_j$, whereas $A$ has zero lower-left block. Hence the differences $V_j-A$ are linearly independent, and the convex hull is a $q$-simplex. Formula \eqref{eq:bary-charpoly} follows from \eqref{eq:charpoly}, because
\[
 (1-c)^2P_p(x)=(1-c)\sum_{j=0}^{q-1}\omega_j x^j.
\]
\end{proof}

\begin{theorem}[Isospectral balancing]\label{thm:balancing}
Let $a,b\in[0,1]$, $p\in\Delta_{q-1}$, and put $c=(a+b)/2$. If $c<1$, define
\begin{equation}\label{eq:tau}
 \tau=\frac{4(1-a)(1-b)}{(2-a-b)^2},
 \qquad
 \wt p=\tau p+(1-\tau)e_0.
\end{equation}
Then $0\le\tau\le1$, $\wt p\in\Delta_{q-1}$, and
\begin{equation}\label{eq:isospectral}
 \det\bigl(xI-B_{2q}(a,b,p)\bigr)
 =\det\bigl(xI-B_{2q}(c,c,\wt p)\bigr).
\end{equation}
If $c=1$, then $a=b=1$ and the matrix is already the apex $A$. The induced map on the matrix polytope is a continuous isospectral retraction onto $\Sq$.
\end{theorem}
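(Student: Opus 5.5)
The plan is a direct polynomial identity check built on Proposition~\ref{prop:charpoly}, followed by elementary inequalities and a continuity argument. Write $X=x^q$. By \eqref{eq:charpoly}, the left side of \eqref{eq:isospectral} is
\[
 (X-a)(X-b)-(1-a)(1-b)P_p(x)=X^2-(a+b)X+ab-(1-a)(1-b)P_p(x).
\]
The right side is
\[
 (X-c)^2-(1-c)^2P_{\wt p}(x)=X^2-(a+b)X+c^2-(1-c)^2\bigl(\tau P_p(x)+1-\tau\bigr),
\]
using $P_{\wt p}=\tau P_p+(1-\tau)$ because $P_{e_0}\equiv1$. The quadratic and linear terms in $X$ already agree. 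It remains to match two things: the coefficient of $P_p$ and the constant term.

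For the $P_p$ coefficient we need $(1-c)^2\tau=(1-a)(1-b)$. This holds by the definition \eqref{eq:tau}, since $(1-c)^2=(2-a-b)^2/4$. For the constant term we need
\[
 ab=c^2-(1-c)^2+(1-a)(1-b)=2c-1+(1-a)(1-b).
\]
Expanding the right side gives $a+b-1+1-a-b+ab=ab$, as required. So \eqref{eq:isospectral} holds for every $x$.

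For the bounds on $\tau$, note that $\tau\ge0$ is clear. The inequality $\tau\le1$ is AM--GM applied to $1-a,1-b\ge0$: $4(1-a)(1-b)\le(2-a-b)^2$, with $2-a-b>0$ because $c<1$. Then $\wt p$ is a convex combination of $p$ and $e_0$, so $\wt p\in\Delta_{q-1}$. If $c=1$, then $a,b\le1$ forces $a=b=1$, and $B_{2q}(1,1,p)$ does not depend on $p$ because the off-diagonal blocks vanish. So the matrix is $A$.

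Finally, the retraction property. On $\Sq$ we have $a=b$, so $\tau=1$ and $\wt p=p$, which means the map fixes $\Sq$ pointwise. Continuity is the only delicate step, and it lives near $c=1$, where $\tau$ is of the form $0/0$. There I would bound the difference from the apex instead of tracking $\tau$. Write
\[
 \bigl\|B_{2q}(c,c,\wt p)-A\bigr\|\le C(1-c),
\]
which holds because only the terminal rows differ from $A$, each by entries of size at most $O(1-c)$ once $\wt p$ ranges over the bounded simplex. Hence the map tends to $A$ as $(a,b)\to(1,1)$, regardless of how $\tau$ behaves. Away from $c=1$ the map is a composition of continuous functions. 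Isospectrality of each image then follows from \eqref{eq:isospectral}.
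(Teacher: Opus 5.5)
Your polynomial identity check, the AM--GM bound on $\tau$, the $c=1$ case, and the apex estimate $\|B_{2q}(c,c,\wt p)-A\|\le C(1-c)$ are all correct. They follow the paper's proof essentially step for step. The paper's remark that the non-apex barycentric coordinates of the image are bounded by $1-c$ is exactly your norm bound.

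The step you skip is well-definedness on matrices. The theorem asserts a map on the matrix polytope, but you build the map, and prove its continuity, on the parameter space $[0,1]^2\times\Delta_{q-1}$. The parametrization $(a,b,p)\mapsto B_{2q}(a,b,p)$ is not injective. When $b=1$ the lower-left block $(1-b)e_{q-1}p^{\mathsf T}$ vanishes, so the matrix no longer records $p$. A priori, two parameter triples for the same matrix could then have different balanced images. The paper treats this degeneracy explicitly.

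The repair is short:
\begin{itemize}
\item For $b=1>a$ one gets $\tau=0$, hence $\wt p=e_0$, independent of $p$.
\item For $a=b=1$ the image is $A$.
\end{itemize}
So your map is constant on the fibres of the parametrization. It then descends continuously to the matrix polytope, because a continuous surjection from the compact parameter space onto the Hausdorff matrix polytope is a quotient map.

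Alternatively, work directly in matrix coordinates, which is essentially how the paper concludes. The non-apex barycentric vector of the image is
\[
(1-c)\wt p=\frac{2(1-a)}{2-a-b}\,(1-b)p+\frac{(a-b)^2}{2(2-a-b)}\,e_0 .
\]
This is a function of the matrix entries $a$, $b$, and $(1-b)p$, and it extends continuously by zero at $(a,b)=(1,1)$.
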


\begin{proof}
The arithmetic-geometric mean inequality gives $0\le\tau\le1$. Moreover,
\[
 (1-c)^2\tau=(1-a)(1-b),
 \qquad
 (1-c)^2(1-\tau)=\frac{(a-b)^2}{4},
\]
and
\[
 (x^q-c)^2-(x^q-a)(x^q-b)=\frac{(a-b)^2}{4}.
\]
Substitution in \eqref{eq:charpoly} proves \eqref{eq:isospectral}. If $a=b$, then $\tau=1$, so every balanced matrix is fixed. At $a=b=1$, all non-apex barycentric coordinates of the image are bounded by $1-c$ and therefore vanish; this gives the continuous extension at $A$.

There is a minor parameter degeneracy when $b=1$, because the original matrix no longer records $p$. If $b=1>a$, then $\tau=0$ and the balanced image is independent of $p$; if $a=b=1$, the image is $A$. More generally, the non-apex barycentric vector is $(1-c)\wt p$, whose two coefficients
\[
 (1-c)\tau=\frac{2(1-a)(1-b)}{2-a-b},
 \qquad
 (1-c)(1-\tau)=\frac{(a-b)^2}{2(2-a-b)}
\]
extend continuously by zero at $(a,b)=(1,1)$. Thus the retraction is well defined and continuous on the matrix polytope, including its degenerate parameter faces.
\end{proof}

\begin{proposition}[Convex balancing principle]\label{prop:convex-balancing}
Let $\mathcal P$ be a convex set of polynomials containing the constant polynomial $1$. For $P\in\mathcal P$ define
\[
 F_{a,b,P}(x)=(x^q-a)(x^q-b)-(1-a)(1-b)P(x).
\]
With $c$ and $\tau$ as in Theorem~\ref{thm:balancing}, put $\wt P=\tau P+(1-\tau)$. Then $\wt P\in\mathcal P$ and
\[
 F_{a,b,P}(x)=(x^q-c)^2-(1-c)^2\wt P(x).
\]
\end{proposition}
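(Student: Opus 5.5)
The statement is a direct generalization of the identity already established in Theorem~\ref{thm:balancing}, with the specific polynomial $P_p$ replaced by an arbitrary element $P$ of a convex set $\mathcal P$ containing $1$. The plan is to check two things: that the balanced polynomial $\wt P$ lies in $\mathcal P$, and that the polynomial identity holds. The arithmetic for the identity can be reused verbatim from the proof of Theorem~\ref{thm:balancing}, since nothing there used the structure of $P_p$.

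\emph{Membership.} By the arithmetic--geometric mean inequality, $(1-a)(1-b)\le\bigl((2-a-b)/2\bigr)^2$, so $0\le\tau\le1$ whenever $c<1$. Hence $\wt P=\tau P+(1-\tau)\cdot 1$ is a convex combination of two elements of $\mathcal P$, namely $P$ and the constant polynomial $1$. Convexity of $\mathcal P$ then gives $\wt P\in\mathcal P$.

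\emph{Identity.} Write $(x^q-a)(x^q-b)=(x^q-c)^2-\tfrac{(a-b)^2}{4}$, which is the difference-of-squares identity with $x^q-a=(x^q-c)-\tfrac{a-b}{2}$ and $x^q-b=(x^q-c)+\tfrac{a-b}{2}$. Next use the two scalar identities
\[
(1-c)^2\tau=(1-a)(1-b),
\qquad
(1-c)^2(1-\tau)=\tfrac{(a-b)^2}{4}.
\]
The second follows from $(2-a-b)^2-4(1-a)(1-b)=(a-b)^2$. Substituting these gives
\[
F_{a,b,P}=(x^q-c)^2-(1-c)^2(1-\tau)-(1-c)^2\tau P=(x^q-c)^2-(1-c)^2\wt P.
\]

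\emph{Degenerate case.} When $c=1$ we have $a=b=1$, so both sides reduce to $(x^q-1)^2$. The identity then holds for any admissible choice of $\tau$, for instance $\tau=1$, which gives $\wt P=P\in\mathcal P$. No step here is a real obstacle; the only care needed is this convention for $\tau$ at $c=1$, which I would state explicitly.
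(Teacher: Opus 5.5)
Your proposal is correct and follows the same route as the paper, which simply notes that the polynomial algebra in the proof of Theorem~\ref{thm:balancing} (the scalar identities for $(1-c)^2\tau$ and $(1-c)^2(1-\tau)$, plus $(x^q-c)^2-(x^q-a)(x^q-b)=(a-b)^2/4$) uses only convexity of $\mathcal P$ and $1\in\mathcal P$. Your explicit treatment of the case $c=1$, where $\tau$ is undefined and can be taken as $\tau=1$, is a small but welcome addition the paper leaves implicit.
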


\begin{proof}
The proof of Theorem~\ref{thm:balancing} is polynomial-algebraic and uses only convexity of the admissible class and the presence of the constant polynomial $1$.
\end{proof}

\subsection{Transfer geometry}

Balancing turns the eigenvalue problem into planar convex geometry. For fixed $\lambda$, the reset distribution ranges over a polygon of powers, whereas the common renewal parameter traces a single parabola. Spectral membership is exactly the statement that these two objects meet within the admissible transfer interval.

For $\lambda\in\C$, define the power polygon and transfer parabola
\begin{equation}\label{eq:transfer-objects}
 K_q(\lambda)=\conv\{1,\lambda,\ldots,\lambda^{q-1}\},
 \qquad
 \Gamma_\lambda(s)=\bigl(1+s(\lambda^q-1)\bigr)^2.
\end{equation}
If $\lambda^q\ne1$, put
\begin{equation}\label{eq:sstar}
 s^*(\lambda)=\frac{2(1-\Re\lambda^q)}{|1-\lambda^q|^2}.
\end{equation}
For $|\lambda|\le1$, one has $s^*(\lambda)\ge1$, because
\[
 2(1-\Re\lambda^q)-|1-\lambda^q|^2=1-|\lambda|^{2q}\ge0.
\]

\begin{proposition}[Transfer criterion]\label{prop:transfer}
If $\lambda^q=1$, then $\lambda\in\Sigma_q$. If $\lambda^q\ne1$, then
\begin{equation}\label{eq:transfer-criterion}
 \lambda\in\Sigma_q
 \quad\Longleftrightarrow\quad
 |\lambda|\le1
 \ \text{and}\ 
 \Gamma_\lambda([1,s^*(\lambda)])\cap K_q(\lambda)\ne\varnothing.
\end{equation}
\end{proposition}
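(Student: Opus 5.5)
By Theorem~\ref{thm:balancing}, $\Sigma_q$ is the spectral union of the balanced simplex $\Sq$ alone, since every matrix in the family has the same characteristic polynomial as a balanced one. So I would work only with the balanced characteristic polynomial \eqref{eq:bary-charpoly}, written in the form $(\lambda^q-c)^2-(1-c)^2P_p(\lambda)$ with $c\in[0,1]$ and $p\in\Delta_{q-1}$. The key observation is that, for fixed $\lambda$, the value $P_p(\lambda)=\sum_j p_j\lambda^j$ ranges exactly over $K_q(\lambda)$ as $p$ ranges over $\Delta_{q-1}$. Hence $\lambda\in\Sigma_q$ if and only if there is $c\in[0,1]$ with $(\lambda^q-c)^2\in(1-c)^2K_q(\lambda)$.

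First I treat $c=1$. The polynomial then reduces to $(\lambda^q-1)^2$, whose roots are exactly the $\lambda$ with $\lambda^q=1$. This proves the first assertion, realized by the apex $A$. It also shows that, when $\lambda^q\ne1$, the value $c=1$ can never realize $\lambda$, so we may assume $c<1$.

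For $c<1$, divide by $(1-c)^2$ and substitute $s=1/(1-c)\in[1,\infty)$. Then
\[
\frac{\lambda^q-c}{1-c}=1+s(\lambda^q-1),
\]
so the condition becomes $\Gamma_\lambda(s)\in K_q(\lambda)$ for some $s\ge1$. The map $c\mapsto s$ is a bijection $[0,1)\to[1,\infty)$, so this is an exact equivalence. It remains to show that $s$ can be restricted to the bounded interval $[1,s^*(\lambda)]$ and to account for the condition $|\lambda|\le1$.

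Necessity of $|\lambda|\le1$ holds because every eigenvalue of a stochastic matrix lies in $\D$. Conversely, assume $|\lambda|\le1$. Then every point of $K_q(\lambda)$ has modulus at most $1$, being a convex combination of the powers $\lambda^j$. So any $s$ with $\Gamma_\lambda(s)\in K_q(\lambda)$ must satisfy $|1+s(\lambda^q-1)|\le1$. Expanding, this reads
\[
s^2|\lambda^q-1|^2+2s\,\Re(\lambda^q-1)\le0 .
\]
For $s>0$ this is equivalent to $s\le s^*(\lambda)$. Hence any admissible $s\ge1$ automatically lies in $[1,s^*(\lambda)]$, and conversely any $s$ in that interval is an admissible value $s\ge1$. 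The interval is nonempty because $s^*(\lambda)\ge1$, as shown just before the statement. This gives \eqref{eq:transfer-criterion}.

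No step is a serious obstacle. The only point needing care is the case bookkeeping: the $c=1$ endpoint must be separated from the rest, since it corresponds to $s=\infty$ and is only relevant when $\lambda^q=1$.
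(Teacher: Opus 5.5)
Your proposal is correct and follows the paper's proof essentially step for step: reduce to the balanced family via Theorem~\ref{thm:balancing}, substitute $s=(1-c)^{-1}$ to turn the eigenvalue equation into $\Gamma_\lambda(s)\in K_q(\lambda)$, and bound $s$ by expanding $|1+s(\lambda^q-1)|\le1$. Your explicit observation that $c=1$ cannot realize $\lambda$ when $\lambda^q\ne1$ is bookkeeping the paper leaves implicit, and it is the right way to close the forward direction.
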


\begin{proof}
By Theorem~\ref{thm:balancing}, it suffices to consider $B_{2q}(c,c,p)$. Its eigenvalue equation is
\[
 (\lambda^q-c)^2=(1-c)^2P_p(\lambda).
\]
If $c<1$, put $s=(1-c)^{-1}\ge1$. Then
\[
 \frac{\lambda^q-c}{1-c}=1+s(\lambda^q-1),
\]
so the eigenvalue equation is precisely
\[
 \Gamma_\lambda(s)=P_p(\lambda)\in K_q(\lambda).
\]
Conversely, barycentric coordinates of an intersection point determine $p$, and $c=1-1/s$ gives a realizing matrix. If $\lambda^q=1$, take the apex $A$.

It remains to bound $s$. Put $R=1+s(\lambda^q-1)$. If $R^2\in K_q(\lambda)$ and $|\lambda|\le1$, then $|R|\le1$. Expanding $|R|^2\le1$ gives
\[
 2s(\Re\lambda^q-1)+s^2|\lambda^q-1|^2\le0,
\]
which is equivalent to $s\le s^*(\lambda)$.
\end{proof}

The criterion is constructive. Once an intersection is known, its polygonal barycentric coordinates recover the reset distribution, and the transfer parameter recovers the common terminal probability. The next proposition records the associated eigenvector explicitly.

\begin{proposition}[Reconstruction]\label{prop:reconstruction}
Suppose
\[
 \Gamma_\lambda(s)=\sum_{j=0}^{q-1}p_j\lambda^j,
 \qquad p\in\Delta_{q-1},\quad s\ge1.
\]
Put
\[
 c=1-\frac1s,
 \qquad
 R=1+s(\lambda^q-1).
\]
Then $B_{2q}(c,c,p)$ has eigenvalue $\lambda$ with right eigenvector
\begin{equation}\label{eq:eigenvector}
 (1,\lambda,\ldots,\lambda^{q-1},R,\lambda R,\ldots,\lambda^{q-1}R)^{\mathsf T}.
\end{equation}
\end{proposition}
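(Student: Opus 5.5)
The plan is to verify the eigenvector equation row by row, using the path structure exactly as in the proof of Proposition~\ref{prop:charpoly}. Write the candidate vector as $z$, with $z_{A_i}=\lambda^i$ and $z_{B_i}=\lambda^iR$ for $0\le i\le q-1$. For nonterminal rows the matrix acts as the shift $N_q$. Row $A_i$ with $i\le q-2$ therefore gives $(Bz)_{A_i}=z_{A_{i+1}}=\lambda^{i+1}=\lambda z_{A_i}$, and the $B$-path rows are handled in the same way. What remains is to check the two terminal rows, which is the terminal system \eqref{eq:terminal-system} with $u=1$ and $v=R$.

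First terminal row ($A_{q-1}$). We have $(Bz)_{A_{q-1}}=c\cdot1+(1-c)R$, and we need this to equal $\lambda^q$. Since $1-c=1/s$,
\[
c+(1-c)R=1-\tfrac1s+\tfrac1s\bigl(1+s(\lambda^q-1)\bigr)=\lambda^q.
\]
This is just the identity $(\lambda^q-c)/(1-c)=R$ already used in Proposition~\ref{prop:transfer}.

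Second terminal row ($B_{q-1}$). Here $(Bz)_{B_{q-1}}=(1-c)\sum_j p_j\lambda^j+cR=(1-c)\Gamma_\lambda(s)+cR$, using the hypothesis $\sum_j p_j\lambda^j=R^2$. We need this to equal $\lambda\cdot\lambda^{q-1}R=\lambda^qR$. From the first row, $\lambda^q=c+(1-c)R$, so
\[
\lambda^qR=cR+(1-c)R^2,
\]
which is exactly the required value.

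It remains to note that $z\neq0$, since its first entry is $1$, and that $c=1-1/s\in[0,1)$ because $s\ge1$. Hence $B_{2q}(c,c,p)$ is a member of the family, and $\lambda$ is an eigenvalue with the stated eigenvector. No step is a genuine obstacle; the only care needed is to use the first terminal identity to rewrite $\lambda^q$ when checking the second row.
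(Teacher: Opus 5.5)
Your proof is correct and follows the paper's argument. The deterministic rows are immediate, and the first terminal row reduces to $c+(1-c)R=\lambda^q$ via $1-c=1/s$. The second terminal row then follows from $\Gamma_\lambda(s)=R^2$ together with that first identity; the paper writes this step as $(\lambda^q-c)R=R^2/s=(1-c)P_p(\lambda)$, which is your computation rearranged.
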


\begin{proof}
The deterministic rows are immediate. Since $1-c=1/s$ and $R=s(\lambda^q-c)$, the first terminal equation holds. The transfer identity gives $P_p(\lambda)=R^2$, and therefore
\[
 (\lambda^q-c)R=\frac{R^2}{s}=(1-c)P_p(\lambda),
\]
which is the second terminal equation.
\end{proof}

\subsection{Two-face generation and elementary spectral facts}

\begin{theorem}[Two-site and two-face generation]\label{thm:two-face}
Every $\lambda\in\Sigma_q$ has a balanced realization for which $p$ is supported on at most two sites. Equivalently, if $\lambda^q\ne1$, there exist $0\le j\le k\le q-1$, $s\in[1,s^*(\lambda)]$, and $t\in[0,1]$ such that
\begin{equation}\label{eq:two-site}
 \Gamma_\lambda(s)=(1-t)\lambda^j+t\lambda^k.
\end{equation}
Consequently,
\begin{equation}\label{eq:two-face-union}
 \Sigma_q=
 \bigcup_{0\le j<k\le q-1}
 \bigcup_{M\in\conv\{A,V_j,V_k\}}\spec M.
\end{equation}
\end{theorem}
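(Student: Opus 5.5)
The plan is to start from the transfer criterion, Proposition~\ref{prop:transfer}, and reduce the intersection problem to the boundary of the power polygon. The case $\lambda^q=1$ is immediate, since the apex $A$ realizes $\lambda$ and lies on every face $\conv\{A,V_j,V_k\}$. So assume $\lambda^q\ne1$ and $\lambda\in\Sigma_q$. Then $|\lambda|\le1$, and the continuous curve $\gamma(s)=\Gamma_\lambda(s)$ on $[1,s^*(\lambda)]$ meets the compact convex polygon $K_q(\lambda)$.

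The key observation is that the curve starts outside or on the boundary of $K_q(\lambda)$. We have $\gamma(1)=\lambda^{2q}$. Every point of $K_q(\lambda)$ other than the vertex $1$ has modulus $\le1$, and $\gamma(s^*)$ has modulus exactly $1$, since $|R|=1$ at $s=s^*$ by the computation in the proof of Proposition~\ref{prop:transfer}.

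The cleanest route is to avoid the start point and use a connectivity argument:
\begin{enumerate}
\item Let $s_0=\max\{s\in[1,s^*]:\gamma(s)\in K_q(\lambda)\}$; the maximum exists by compactness.
\item If $s_0<s^*$, then points $\gamma(s)$ with $s>s_0$ lie outside $K_q(\lambda)$. Hence $\gamma(s_0)\in\partial K_q(\lambda)$.
\item If $s_0=s^*$, then $|\gamma(s^*)|=|R|^2=1$. A point of modulus $1$ in $K_q(\lambda)\subseteq\D$ is an extreme point of $\D$, so it lies on $\partial K_q(\lambda)$; indeed it must be a vertex $\lambda^j$ of modulus one.
\end{enumerate}
In either case $\gamma(s_0)\in\partial K_q(\lambda)$. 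The boundary of a convex polygon $\conv\{1,\lambda,\dots,\lambda^{q-1}\}$ is the union of segments $[\lambda^j,\lambda^k]$ between vertices. This also covers the degenerate cases where the polygon is a segment or a point, where the boundary is the set itself and every point is still a two-term convex combination by Carathéodory in dimension one. So $\gamma(s_0)=(1-t)\lambda^j+t\lambda^k$ with $j\le k$ and $t\in[0,1]$, which is \eqref{eq:two-site}.

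Alternatively, one could skip the boundary argument and apply a plane Carathéodory bound. But that gives three terms, not two, so the boundary argument is genuinely needed.

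Translating back uses Proposition~\ref{prop:reconstruction}. Set $p=(1-t)e_j+te_k$ and $c=1-1/s_0$. Then $B_{2q}(c,c,p)=cA+(1-c)\bigl((1-t)V_j+tV_k\bigr)\in\conv\{A,V_j,V_k\}$ has eigenvalue $\lambda$. If $j=k$, choose any $k'\neq j$ (possible since $q\ge2$) to land in a genuine triangular face. The reverse inclusion in \eqref{eq:two-face-union} is trivial, because each face lies in $\Sq$ and its spectra are contained in $\Sigma_q$ by Theorem~\ref{thm:balancing}.

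The main obstacle is step 2 and the endpoint case. I must be sure that the last exit point is a genuine boundary point, which is where the upper bound $s^*$ matters. This is handled by the modulus argument above, with the observation that $K_q(\lambda)\subseteq\D$ when $|\lambda|\le1$.
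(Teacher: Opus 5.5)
Your proof is correct and is essentially the paper's argument: you take the maximal transfer parameter $s_0\in[1,s^*(\lambda)]$, and you get $\Gamma_\lambda(s_0)\in\partial K_q(\lambda)$ from maximality when $s_0<s^*(\lambda)$ and from unit-modulus extremality when $s_0=s^*(\lambda)$; you then read off a two-vertex segment, handling degenerate polygons and the case $j=k$ as the paper does. The only blemish is your opening remark that the curve starts outside or on $\partial K_q(\lambda)$, which is false in general (for $q=3$ and $\lambda=\rho\e^{2\pi i/3}$ with small $\rho$, one has $\Gamma_\lambda(1)=\rho^6\in\Int K_3(\lambda)$), but you rightly discard it and never use it.
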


\begin{proof}
If $\lambda^q=1$, the apex $A$ realizes $\lambda$. Assume $\lambda^q\ne1$ and define
\[
 I_\lambda=\{s\in[1,s^*(\lambda)]:\Gamma_\lambda(s)\in K_q(\lambda)\}.
\]
This set is nonempty and compact. Let $s_0=\max I_\lambda$. If $K_q(\lambda)$ is a segment or a point, every point of it is a convex combination of at most two listed powers. Suppose $K_q(\lambda)$ is two-dimensional. Then $\Gamma_\lambda(s_0)$ lies on its boundary. If $s_0<s^*(\lambda)$, an ordinary interior intersection would persist for slightly larger $s$, contradicting maximality. If $s_0=s^*(\lambda)$, then $|1+s_0(\lambda^q-1)|=1$, so $|\Gamma_\lambda(s_0)|=1$; an ordinary interior point of the two-dimensional convex hull of points in the closed unit disk has modulus strictly below one. Thus every maximal transfer point lies on a segment joining two vertices, which yields \eqref{eq:two-site}.

The corresponding balanced matrix is
\[
 B_{2q}\bigl(c,c,(1-t)e_j+te_k\bigr)
 =cA+(1-c)\bigl((1-t)V_j+tV_k\bigr),
 \qquad c=1-\frac1s,
\]
and lies in $\conv\{A,V_j,V_k\}$. A one-site witness may be placed in any triangular face containing its edge; such a face exists because $q\ge2$.
\end{proof}

\begin{figure}[htbp]
\centering
\includegraphics[width=0.91\textwidth]{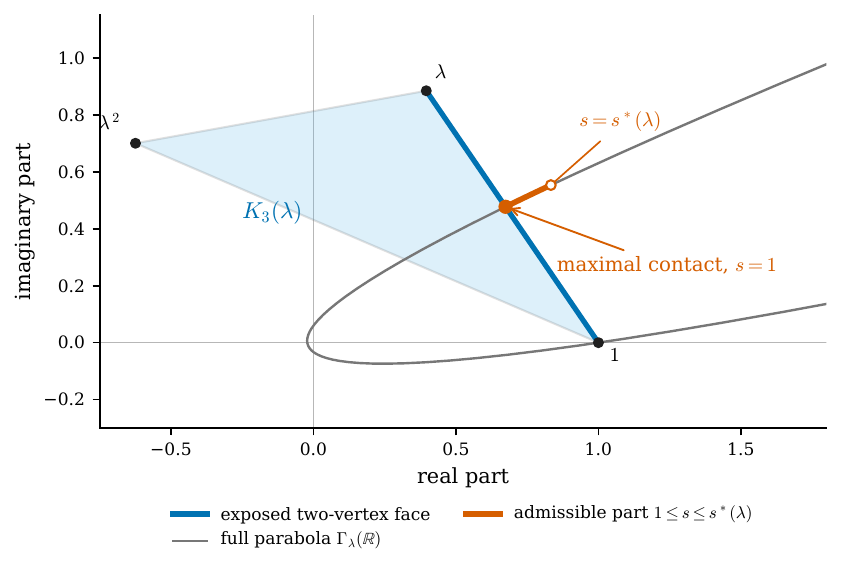}
\caption{Transfer geometry for $q=3$. The power polygon $K_3(\lambda)$ is shown together with the transfer parabola $\Gamma_\lambda(\R)$ and its admissible portion $1\le s\le s^*(\lambda)$. At the displayed boundary configuration, maximal transfer occurs at $s=1$ in the relative interior of the exposed face $[1,\lambda]$; for $s>1$, the admissible branch immediately leaves the polygon. This is the two-vertex mechanism used in Theorem~\ref{thm:two-face}.}
\label{fig:transfer-parabola}
\end{figure}
\FloatBarrier

Figure~\ref{fig:transfer-parabola} also explains why maximal transfer, rather than an arbitrary application of planar Carath\'eodory, is the useful reduction: maximality forces the witness onto the boundary of the power polygon and hence onto a two-vertex face.

The edge $AV_0$ already contains a canonical spoke skeleton. Along this edge,
\begin{equation}\label{eq:spoke-factor}
 \chi_{cA+(1-c)V_0}(x)
 =(x^q-c)^2-(1-c)^2
 =(x^q-1)\bigl(x^q-(2c-1)\bigr).
\end{equation}
As $c$ runs from $0$ to $1$, the second factor ranges over $x^q\in[-1,1]$. Hence
\begin{equation}\label{eq:spoke-skeleton}
 \{\lambda\in\D:\lambda^q\in[-1,1]\}\subset\Sigma_q.
\end{equation}

\begin{proposition}[Compactness, symmetry, real section, and first rays]\label{prop:elementary}
The set $\Sigma_q$ is compact and invariant under complex conjugation. Moreover,
\[
 \Sigma_q\cap\R=[-1,1],
\]
and
\[
 \{\rho\e^{\pm i\pi/q}:0\le\rho\le1\}\subset\partial\Sigma_q.
\]
No nonreal point with $0<|\Arg\lambda|<\pi/q$ belongs to $\Sigma_q$.
\end{proposition}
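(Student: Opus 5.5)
The plan is to prove the four claims in turn, in each case leaning on the balancing retraction (Theorem~\ref{thm:balancing}), the transfer criterion (Proposition~\ref{prop:transfer}), and the spoke factorization \eqref{eq:spoke-factor}.

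\emph{Compactness and symmetry.} The parameter set $[0,1]^2\times\Delta_{q-1}$ is compact. The map from parameters to the (multiset of) roots of \eqref{eq:charpoly} is continuous, and every root is bounded by $1$ because the matrices are stochastic. Hence $\Sigma_q$ is a continuous image of a compact set, and so it is compact. Conjugation invariance is immediate because every matrix is real.

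\emph{Real section.} The inclusion $\Sigma_q\cap\R\subseteq[-1,1]$ holds since $\Sigma_q\subseteq\D$. For the reverse inclusion I would use the spoke skeleton \eqref{eq:spoke-skeleton}:
\begin{itemize}
\item for $r\in[0,1]$ one has $r^q\in[0,1]$;
\item for $r\in[-1,0]$ one has $r^q\in[-1,1]$ for either parity of $q$.
\end{itemize}
In both cases the real point lies in the spoke skeleton, so $[-1,1]\subseteq\Sigma_q$.

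\emph{The gap sector.} Take $\lambda=\rho\e^{i\theta}$ with $0<\theta<\pi/q$ and $0<\rho\le1$; conjugation handles negative angles. I would argue by contradiction through the transfer criterion. Since $q\theta\in(0,\pi)$, we have $\Im\lambda^q>0$, so $\lambda^q\ne1$. Suppose that for some $s\ge1$ we had $\Gamma_\lambda(s)=R^2\in K_q(\lambda)$, where $R=1+s(\lambda^q-1)$. The constraint on $K_q(\lambda)$ is angular:
\begin{itemize}
\item the generators $\lambda^j$, $0\le j\le q-1$, have arguments $j\theta\in[0,(q-1)\theta]\subset[0,\pi)$;
\item hence every nonzero point of $K_q(\lambda)$ has argument in $[0,(q-1)\theta]$, and $K_q(\lambda)$ omits $0$ unless $\rho$ is degenerate, which it is not.
\end{itemize}
On the other side, $R=(1-s)+s\lambda^q$ lies on the ray from $1$ through $\lambda^q$ at or beyond $\lambda^q$, because $s\ge1$. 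So $\Im R=s\Im\lambda^q>0$, and $\Arg R$ lies between $q\theta$ and $\pi$, that is, $\Arg R\ge\Arg\lambda^q=q\theta$. Two cases follow.
\begin{itemize}
\item If $\Arg R\le\pi/2$, then $\Arg R^2=2\Arg R\ge2q\theta>(q-1)\theta$, so $R^2\notin K_q(\lambda)$.
\item If $\Arg R>\pi/2$, then $\Arg R^2\in(\pi,2\pi)$, which means $\Im R^2<0$. But $K_q(\lambda)$ lies in the closed upper half-plane. This forces $R^2$ real, hence $\Arg R=\pi/2$, a contradiction.
\end{itemize}
Either way there is no intersection, so no such $\lambda$ lies in $\Sigma_q$. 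The main obstacle here is to track the branch of $\Arg R^2$ carefully; reducing to the two cases above settles it.

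\emph{First rays on the boundary.} Points $\rho\e^{\pm i\pi/q}$ satisfy $\lambda^q=-\rho^q\in[-1,0]$. So they lie in the spoke skeleton \eqref{eq:spoke-skeleton}, realized as in the introduction with $c=(1-\rho^q)/2$. By the gap statement, each such point is a limit of points $\rho\e^{i(\pi/q-\delta)}$ that lie outside $\Sigma_q$. This holds for $\rho>0$, and for $\rho=0$ one uses the same approximation near the origin. Hence every point of the first rays belongs to $\partial\Sigma_q$.
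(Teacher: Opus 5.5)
Your proposal is correct and follows essentially the paper's proof. The real section and first rays come from the spoke factorization. The sector gap is the same argument-cone comparison: your $R=1+s(\lambda^q-1)$ is the paper's $\lambda^q-c$ rescaled by the positive factor $s=(1-c)^{-1}$, and you use the same split into $\Arg R^2\ge 2q\theta$ versus $\Im R^2<0$. One small simplification: in your second case the step through ``$R^2$ real'' is unnecessary, because $\Im R^2<0$ already places $R^2$ outside the closed upper half-plane that contains $K_q(\lambda)$.
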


\begin{proof}
The matrix family is compact. The set
\[
 \{(M,\lambda):M\ \text{belongs to the family},\ |\lambda|\le1,\ \det(\lambda I-M)=0\}
\]
is compact, and its projection is $\Sigma_q$. Conjugation invariance follows because all matrices are real.

Inclusion $[-1,1]\subset\Sigma_q$ and the first rays follow from \eqref{eq:spoke-factor}; the reverse real inclusion follows from stochasticity. For the angular gap, by conjugation take $\lambda=\rho\e^{i\theta}$ with $0<\theta<\pi/q$. The powers $1,\lambda,\ldots,\lambda^{q-1}$ lie in the closed cone of arguments $[0,(q-1)\theta]$, whose aperture is below $\pi$. For a balanced eigenvalue with $c<1$,
\[
 (\lambda^q-c)^2=(1-c)^2P_p(\lambda).
\]
Subtracting $c\ge0$ from $\lambda^q$ increases its argument within the upper half-plane, so
\[
 \Arg(\lambda^q-c)\in[q\theta,\pi).
\]
The principal argument of its square is therefore either at least $2q\theta>(q-1)\theta$ or negative after reduction modulo $2\pi$. In either case it lies outside the power cone. The case $c=1$ would force $\lambda^q=1$, which is impossible in the strict sector. The first rays are spectral and are boundary because the open sector on one side is empty.
\end{proof}

\begin{proposition}[Unit-circle section]\label{prop:unit-circle}
\[
 \Sigma_q\cap\T=\left\{\zeta:\zeta^m=1\ \text{for some }1\le m\le2q\right\}.
\]
\end{proposition}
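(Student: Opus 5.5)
Both inclusions can be read off the terminal system of Proposition~\ref{prop:charpoly} and the transfer criterion of Proposition~\ref{prop:transfer}.

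\emph{Inclusion $\supseteq$.} Let $\zeta^m=1$ with $1\le m\le 2q$.
\begin{itemize}
\item If $\zeta^q=1$, the apex $A$ realizes $\zeta$. This covers the case $m\mid q$, and more generally any $\zeta$ with $\zeta^q=1$.
\item If $q<m\le 2q$, put $j=2q-m\in\{0,\ldots,q-1\}$. Then $\zeta^{2q}=\zeta^{j}$, so $\zeta$ is a root of the vertex polynomial $x^{2q}-x^j=\chi_{V_j}(x)$ from \eqref{eq:bary-charpoly}.
\item If $m<q$, replace $m$ by a multiple $m'$ of it lying in $(q,2q]$. Such a multiple exists because $m<q$ implies that some multiple falls in $(q,2q]$. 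Then $\zeta^{m'}=1$ and the previous case applies.
\end{itemize}
So every root of unity of order at most $2q$ lies in $\Sigma_q$.

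\emph{Inclusion $\subseteq$.} Let $\lambda\in\Sigma_q$ with $|\lambda|=1$. By Theorem~\ref{thm:balancing} we may take a balanced realization $B_{2q}(c,c,p)$. If $\lambda^q=1$ we are done. Otherwise $c<1$, and with $s=1/(1-c)$ the transfer identity gives
\[
\Gamma_\lambda(s)=R^2=P_p(\lambda),\qquad R=1+s(\lambda^q-1).
\]

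\emph{Step 1: the transfer point lies on the unit circle.} Since $|\lambda|=1$, the quantity $s^*(\lambda)$ equals $1$, so $s=1$ and $c=0$. Hence $R=\lambda^q$ and $P_p(\lambda)=\lambda^{2q}$ has modulus one.

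\emph{Step 2: only one site can be charged.} The powers $1,\lambda,\ldots,\lambda^{q-1}$ are points of $\T$, and $\T$ is strictly convex. A convex combination of points of $\T$ has modulus one only if every point carrying positive weight is the same point. Therefore $\lambda^j=\lambda^{2q}$ for some $j$ with $p_j>0$.

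\emph{Step 3: conclude.} From $\lambda^j=\lambda^{2q}$ we get $\lambda^{2q-j}=1$ with $q<2q-j\le 2q$. So $\lambda$ is a root of unity of order dividing some $m\le 2q$.

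This gives equality. Step~2 carries the argument, and it is elementary because strict convexity of the circle forces a single active site. I expect the only subtlety to be degenerate parameter faces, for example $b=1$. These are handled because the balancing retraction in Theorem~\ref{thm:balancing} is isospectral on the whole polytope.
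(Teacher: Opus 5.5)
Your proof is correct and follows the paper's argument essentially step for step: the apex $A$ handles $\lambda^q=1$, the identity $s^*(\lambda)=1$ on $\T$ forces $c=0$ and hence $\lambda^{2q}=P_p(\lambda)$, and strict convexity of the circle isolates a single site. Conversely, a multiple of the exponent lying in $(q,2q]$ yields a realizing vertex $V_j$; indexing by an arbitrary exponent $m$ with $\zeta^m=1$ rather than by the exact order is only a cosmetic difference.
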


\begin{proof}
Let $|\lambda|=1$. If $\lambda^q=1$, then $\lambda$ is realized by $A$. Otherwise $s^*(\lambda)=1$, so every transfer witness satisfies
\[
 \lambda^{2q}=\sum_{j=0}^{q-1}p_j\lambda^j.
\]
A convex combination of unit-modulus points has modulus one only when every positively weighted point is identical. Hence $\lambda^{2q}=\lambda^j$ for some $0\le j\le q-1$, so the order of $\lambda$ is at most $2q$.

Conversely, let $\lambda$ have order $m\le2q$. If $m\mid q$, then $\lambda^q=1$. Otherwise let $L$ be the least multiple of $m$ strictly larger than $q$. If $m>q$, then $L=m\le2q$; if $m\le q$, then $L\le q+m\le2q$. With $j=2q-L$, one has $0\le j\le q-1$ and $\lambda^{2q}=\lambda^j$, so $V_j$ realizes $\lambda$.
\end{proof}

\begin{corollary}\label{cor:strict-karpelevic}
The inclusion $\Sigma_q\subset\Theta_{2q}$ is strict.
\end{corollary}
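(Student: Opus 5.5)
The plan is to exhibit a single point that lies in $\Theta_{2q}$ but not in $\Sigma_q$. The inclusion $\Sigma_q\subseteq\Theta_{2q}$ is already recorded in the introduction, so only the strictness needs an argument. I see two natural witnesses, and I will use whichever gives the cleaner proof.

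\emph{First option: the sector gap.} By Proposition~\ref{prop:elementary}, no nonreal $\lambda$ with $0<|\Arg\lambda|<\pi/q$ belongs to $\Sigma_q$. The Karpelevi\v{c} region, by contrast, contains nonreal points in this sector.
\begin{itemize}
\item A concrete example is the $2q$-cycle permutation matrix. It is stochastic of order $2q$ and has eigenvalue $\e^{2\pi i/(2q)}=\e^{i\pi/q}$, which lies on the edge of the sector rather than strictly inside it.
\item To get strictly inside, I would take the convex combination $M_t=(1-t)I+tC$, where $C$ is the $2q$-cycle. Its eigenvalues $(1-t)+t\,\e^{i\pi/q}$ for $t\in(0,1)$ lie on the chord from $1$ to $\e^{i\pi/q}$.
\item Every point of that open chord has argument strictly between $0$ and $\pi/q$ and is nonreal. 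So it lies in $\Theta_{2q}$ but not in $\Sigma_q$.
\end{itemize}

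\emph{Second option: the unit circle.} Proposition~\ref{prop:unit-circle} says the roots of unity in $\Sigma_q$ have order at most $2q$. Those are exactly the roots of unity in $\Theta_{2q}$, so this route gives no separation, and I will not use it.

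The only step needing care is checking that the chord points are strictly inside the sector. Since the chord is a segment from $1$ to $\e^{i\pi/q}$ with $\pi/q\le\pi/2<\pi$, its interior points have arguments strictly between $0$ and $\pi/q$. This requires only $q\ge2$, which holds throughout. I do not expect any genuine obstacle; the corollary follows directly from Proposition~\ref{prop:elementary} together with this explicit stochastic realizer.
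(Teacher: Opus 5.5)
Your proposal is correct and is essentially the paper's proof: the paper uses the same witness $(1-\tau)I_{2q}+\tau C_{2q}$ with $0<\tau<1$. Its eigenvalue $(1-\tau)+\tau\e^{i\pi/q}$ lies strictly inside the sector $0<\Arg\lambda<\pi/q$, which Proposition~\ref{prop:elementary} excludes from $\Sigma_q$.
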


\begin{proof}
Every matrix in the family is $2q\times2q$ and stochastic, so $\Sigma_q\subset\Theta_{2q}$. If $C_{2q}$ is the cyclic permutation matrix, then $(1-\tau)I_{2q}+\tau C_{2q}$ has eigenvalue $(1-\tau)+\tau\e^{i\pi/q}$ for $0<\tau<1$. Its argument lies strictly between $0$ and $\pi/q$, so Proposition~\ref{prop:elementary} excludes it from $\Sigma_q$.
\end{proof}

\section{Transfer, Farey, and chord geometry}\label{sec:geometry}

From this section through the open-cell boundary analysis we assume $q\ge3$. For nonreal $\lambda$, the points $1,\lambda,\lambda^2$ are noncollinear, so $K_q(\lambda)$ has ordinary planar interior. The case $q=2$ is treated directly in Appendix~\ref{app:q2}.

\subsection{Stable intersections and maximal contacts}

\begin{lemma}[Strict transfer persistence]\label{lem:persistence}
Let $\lambda_0\in\D^\circ$, $\lambda_0^q\ne1$, and suppose that for some
\[
 1\le s_0<s^*(\lambda_0)
\]
one has
\[
 \Gamma_{\lambda_0}(s_0)\in\Int K_q(\lambda_0).
\]
Then $\lambda_0\in\Int\Sigma_q$.
\end{lemma}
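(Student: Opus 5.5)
The plan is a continuity argument built on the transfer criterion (Proposition~\ref{prop:transfer}). Because $\Gamma_{\lambda_0}(s_0)$ lies in the interior of the two-dimensional polygon $K_q(\lambda_0)$, and $1\le s_0<s^*(\lambda_0)$, the intersection should persist under small perturbations of $\lambda$. I would show that every $\lambda$ close enough to $\lambda_0$ satisfies the right-hand side of \eqref{eq:transfer-criterion}, which gives $\lambda\in\Sigma_q$ on a neighbourhood of $\lambda_0$.

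First fix a small disk $U$ around $\lambda_0$ that satisfies three conditions: $U\subset\D^\circ$, $\lambda^q\ne1$ on $U$, and $s^*(\lambda)>s_0$ on $U$. The last condition is possible because $s^*$ is continuous wherever $\lambda^q\ne1$ and $s^*(\lambda_0)>s_0$. If $s_0=1$, note also that $s^*(\lambda)>1$ strictly on $\D^\circ$, since $1-|\lambda|^{2q}>0$. It follows that for every $\lambda\in U$ the fixed value $s_0$ lies in $[1,s^*(\lambda)]$. Hence it suffices to prove that $\Gamma_\lambda(s_0)\in K_q(\lambda)$ for all $\lambda$ near $\lambda_0$.

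The second step supplies that containment, using a barycentric argument instead of any general continuity of convex hulls. Since $\Gamma_{\lambda_0}(s_0)$ is an interior point, a Carathéodory-type argument in the plane gives three powers $\lambda_0^{i_1},\lambda_0^{i_2},\lambda_0^{i_3}$ whose triangle contains $\Gamma_{\lambda_0}(s_0)$ in its open interior. If the point happens to lie on a diagonal of a triangulation, replace that triangle with a small open triangle obtained by averaging the vertices, or alternatively write the point as a convex combination with \emph{all} weights strictly positive. The cleanest route is the second one: an interior point of $\conv\{v_0,\dots,v_{q-1}\}$ in $\R^2$ admits a representation with all $p_j>0$ and at least three affinely independent vertices.

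Then consider the map $(\lambda,w)\mapsto$ (barycentric coordinates of $w$ with respect to $\lambda^{i_1},\lambda^{i_2},\lambda^{i_3}$). These coordinates are rational functions of $\lambda$, with nonvanishing denominator near $\lambda_0$ because the triangle stays nondegenerate. Evaluating at $w=\Gamma_\lambda(s_0)$, which is polynomial in $\lambda$, gives coordinates that depend continuously on $\lambda$. They are strictly positive at $\lambda_0$, so they remain positive on a smaller disk. Therefore $\Gamma_\lambda(s_0)\in K_q(\lambda)$ there, and Proposition~\ref{prop:transfer} gives $\lambda\in\Sigma_q$. Proposition~\ref{prop:reconstruction} even yields an explicit realizing matrix $B_{2q}(1-1/s_0,1-1/s_0,p(\lambda))$.

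The main obstacle is choosing the triangle so that the point is \emph{strictly} inside it, not on an edge. I would handle this by fan-triangulating $K_q(\lambda_0)$ from a point and, if $\Gamma_{\lambda_0}(s_0)$ lies on an interior diagonal, using instead the two adjacent triangles. The point is then interior to their union, which is a quadrilateral. On a neighbourhood the union of the two perturbed triangles still contains the perturbed point, since that point lies in the interior of the union and the union varies continuously. The remaining conditions are open conditions handled by continuity and need no separate argument.
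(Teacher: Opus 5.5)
Your argument is correct in outline but takes a different route from the paper. The paper never chooses vertices. It uses the support function $h_\lambda(u)=\max_j\Re(u\lambda^j)$: interiority of $Y_0=\Gamma_{\lambda_0}(s_0)$ gives a uniform margin $\Re(uY_0)\le h_{\lambda_0}(u)-\delta$ for all $|u|=1$. Joint uniform continuity in $(\lambda,u)$ then keeps a margin $\delta/3$, so $\Gamma_\lambda(s_0)\in\Int K_q(\lambda)$ for all nearby $\lambda$, with no case analysis. Your barycentric version gives something extra: continuously varying reset weights, and hence an explicit continuous family of realizing matrices via Proposition~\ref{prop:reconstruction}. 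The price is a combinatorial step. Your treatment of $s_0\le s^*(\lambda)$ by continuity of $s^*$ matches the paper.

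That combinatorial step is false as you first state it. As a statement about planar polygons, there need not be three powers whose \emph{open} triangle contains $Y_0$. For example, take $q=4$ with the four powers in convex position and $Y_0$ at the crossing of the diagonals: then $Y_0$ lies on an edge of every vertex triangle. Your fan-triangulation patch does cover this case. However, the phrase ``the union varies continuously'' is exactly where the work lies. You must check that the four vertices stay in strictly convex position and that $\Gamma_\lambda(s_0)$ keeps satisfying the four strict edge inequalities. That is the paper's strict-support argument restricted to four edges.

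Likewise, a single representation with all $p_j>0$ does not by itself give persistence. In general $\sum_j p_j\lambda^j\ne\Gamma_\lambda(s_0)$ once $\lambda\ne\lambda_0$, so the weights would need a continuous correction that you do not supply.

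A cleaner fix within your framework avoids all degenerate cases:
\begin{itemize}
\item Take the three points $Y_0+\eps\,\e^{2\pi ik/3}$, $k=0,1,2$, which lie in $K_q(\lambda_0)$ for small $\eps>0$.
\item Fix convex weights representing each of them, and let these fixed combinations of powers move with $\lambda$.
\item Take barycentric coordinates of $\Gamma_\lambda(s_0)$ relative to this moving triangle, which is nondegenerate near $\lambda_0$.
\end{itemize}
These coordinates equal $1/3$ at $\lambda_0$ and so remain positive nearby.
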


\begin{proof}
Put $Y_0=\Gamma_{\lambda_0}(s_0)$. For $|u|=1$, let
\[
 h_\lambda(u)=\max_{0\le j\le q-1}\Re(u\lambda^j)
\]
be the support function of $K_q(\lambda)$. Since $Y_0$ is an ordinary interior point, there is $\delta>0$ such that
\[
 \Re(uY_0)\le h_{\lambda_0}(u)-\delta
 \qquad (|u|=1).
\]
The functions $(\lambda,u)\mapsto h_\lambda(u)$ and $\lambda\mapsto\Gamma_\lambda(s_0)$ are uniformly continuous near $\lambda_0\times\T$. After shrinking the neighborhood of $\lambda_0$, the same inequalities hold with margin $\delta/3$, so $\Gamma_\lambda(s_0)\in\Int K_q(\lambda)$. Continuity of $s^*$ also preserves $s_0<s^*(\lambda)$. The transfer criterion then places a two-dimensional neighborhood of $\lambda_0$ in $\Sigma_q$.
\end{proof}

For a spectral $\lambda$ with $\lambda^q\ne1$, define
\begin{equation}\label{eq:max-transfer}
 I_\lambda=\{s\in[1,s^*(\lambda)]:\Gamma_\lambda(s)\in K_q(\lambda)\},
 \qquad
 s_{\max}=\max I_\lambda.
\end{equation}

\begin{proposition}[Maximal-contact trichotomy]\label{prop:maximal-trichotomy}
Let
\[
 \lambda\in\partial\Sigma_q\cap(\D^\circ\setminus\R),
 \qquad
 \lambda^q\ne1.
\]
Then a maximal transfer witness has one of the following forms:
\begin{enumerate}[label=\textup{(\roman*)}]
\item $s_{\max}=1$ and
\[
 \Gamma_\lambda(1)=(1-t)\lambda^j+t\lambda^k
\]
for two powers on an exposed face of $K_q(\lambda)$;
\item $1<s_{\max}<s^*(\lambda)$ and
\[
 \Gamma_\lambda(s_{\max})=\lambda^j
\]
for a vertex of $K_q(\lambda)$;
\item $s_{\max}=s^*(\lambda)$ and $\Arg\lambda=\pm\pi/q$.
\end{enumerate}
In particular, a strict two-site contact at an interior transfer parameter cannot contribute to the nonreal boundary.
\end{proposition}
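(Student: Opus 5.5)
The plan is to fix a boundary point $\lambda$ as in the statement and split according to where $s_{\max}=\max I_\lambda$ lies in $[1,s^*(\lambda)]$. In each case, Lemma~\ref{lem:persistence} rules out the configurations that would put $\lambda$ in $\Int\Sigma_q$. Throughout, $q\ge3$ and $\lambda$ is nonreal, so $K_q(\lambda)$ is two-dimensional. Put $Y=\Gamma_\lambda(s_{\max})$; it lies in $K_q(\lambda)$ by compactness of $I_\lambda$.

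\emph{Step 1 (interior transfer parameter).} Suppose $1<s_{\max}<s^*(\lambda)$. If $Y\in\Int K_q(\lambda)$, Lemma~\ref{lem:persistence} gives $\lambda\in\Int\Sigma_q$, which is a contradiction; hence $Y\in\partial K_q(\lambda)$. Suppose $Y$ lies in the relative interior of an edge $[\lambda^j,\lambda^k]$. I would perturb in the parameter $s$ and compute the transversality quantity $\Im\bigl(\overline{(\lambda^k-\lambda^j)}\,\Gamma_\lambda'(s_{\max})\bigr)$.
\begin{itemize}
\item If it is nonzero, the parabola crosses the edge. Then for $s$ slightly smaller or slightly larger than $s_{\max}$ the point $\Gamma_\lambda(s)$ is an interior point, and Lemma~\ref{lem:persistence} again gives an interior spectral point.
\item If it vanishes, the parabola is tangent to the edge line at $Y$. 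A parabola has nonzero curvature, and $\Gamma_\lambda$ is a nondegenerate parabola because $\lambda^q\ne1$. So near $Y$ the parabola lies strictly on one side of the line. If that side is the inner side, points $\Gamma_\lambda(s_{\max}\pm\eta)$ are interior, and the previous argument applies. If it is the outer side, I would instead perturb $\lambda$ in the direction that makes $\Gamma_\lambda(s_{\max})$ move inward relative to the moving edge. The two-parameter family (in $\lambda$ and $s$) then produces interior intersections for every $\lambda'$ in a neighborhood, again putting $\lambda$ in $\Int\Sigma_q$.
\end{itemize}
This leaves only the case where $Y$ is a vertex $\lambda^j$, which is case (ii).

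\emph{Step 2 ($s_{\max}=1$).} Here $Y$ is on $\partial K_q(\lambda)$ by the same interior argument. Lemma~\ref{lem:persistence} requires $s_0<s^*$, and this holds automatically because $s^*(\lambda)>1$ for $|\lambda|<1$. So $Y$ lies on an exposed face, which is either an edge $[\lambda^j,\lambda^k]$ or a vertex (the case $t\in\{0,1\}$). This is case (i).

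\emph{Step 3 ($s_{\max}=s^*(\lambda)$).} Then $|Y|=1$ while $Y\in K_q(\lambda)\subset\{|z|\le|\lambda|^0\}$. Since every power $\lambda^j$ with $j\ge1$ has modulus less than $1$, a point of modulus one in $K_q(\lambda)$ must be the vertex $1$, so $Y=1$. Hence $1+s^*(\lambda^q-1)=\pm1$. The value $+1$ is impossible because $\lambda^q\ne1$. The value $-1$ gives $\lambda^q-1=-2/s^*$, so $\lambda^q$ is real and negative, i.e.\ $\lambda^q\in(-1,0]$ (the value $0$ is excluded since $\lambda\ne0$ is needed for $\lambda$ nonreal). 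Points with $\lambda^q$ negative real lie on the rays $\Arg\lambda=(2m+1)\pi/q$. All such rays except $\Arg\lambda=\pm\pi/q$ sit in the interior of the spoke skeleton~\eqref{eq:spoke-skeleton}. For those I still have to show interiority, for instance by exhibiting an alternative witness with $s<s^*$ lying in $\Int K_q$, since the angular opening of the powers exceeds $\pi$ there. This yields case (iii).

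\emph{Main obstacle.} The tangential subcase of Step~1 is the delicate part. Proving that the intersection persists under perturbation of $\lambda$ requires a genuine transversality computation in the variable $\lambda$, not only in $s$. The same issue arises in the interiority claim for spoke rays other than the first in Step~3. I would try an implicit-function argument on the map $(s,t,\lambda)\mapsto\Gamma_\lambda(s)-(1-t)\lambda^j-t\lambda^k$ and show that its Jacobian in $\lambda$ has full rank at a strict two-site contact. The final sentence of the statement then follows, since a strict two-site contact at an interior transfer parameter has just been shown to force $\lambda\in\Int\Sigma_q$.
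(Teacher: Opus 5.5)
Your case split on $s_{\max}$, Step~2, and the transversal part of Step~1 agree with the paper. The two places you flag as obstacles, however, are genuine gaps, and the claims they rest on are not correct as stated.

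\textbf{Tangential subcase of Step~1.} You assert that $\Gamma_\lambda$ is a nondegenerate parabola whenever $\lambda^q\ne1$. In fact $\Gamma_\lambda(s)=1+2sd+s^2d^2$ with $d=\lambda^q-1$ collapses onto the real axis exactly when $\lambda^q\in\R$. More importantly, you leave the ``outer side'' configuration to an unspecified perturbation in $\lambda$. The claim that this yields interior intersections for \emph{every} nearby $\lambda'$ is unjustified: an external tangency is generically destroyed by moving $\lambda$ one way, so if it could occur it would be a genuine boundary mechanism.

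The missing idea is that it cannot occur, because $\Gamma_\lambda(0)=1\in K_q(\lambda)$. Take an affine $H$ with $H\le0$ on $K_q(\lambda)$ and $H=0$ on the edge. Then $g(s)=H(\Gamma_\lambda(s))$ is a real quadratic with a double zero at $s_{\max}$, so $g(s)=C(s-s_{\max})^2$. Since $g(0)=H(1)\le0$, this forces $C\le0$, so the curve always bends toward the polygon.
\begin{itemize}
\item If $C<0$, then $s$ on both sides of $s_{\max}$ gives ordinary interior intersections, contradicting Lemma~\ref{lem:persistence}.
\item If $C=0$, the whole curve lies on the edge line, which forces $\lambda^q\in\R$. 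Then for small $\eta>0$ the point $\Gamma_\lambda(s_{\max}+\eta)$ stays in the relative interior of the edge, with $s_{\max}+\eta<s^*(\lambda)$, contradicting maximality. The paper instead checks that the real axis carries no nontrivial exposed face of $K_q(\lambda)$.
\end{itemize}
This handles your degenerate and outer-side cases together, and no perturbation or Jacobian in $\lambda$ is needed.

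\textbf{Step~3.} From $\lambda^q=1-2/s^*(\lambda)$ and $s^*(\lambda)>1$ you only get $\lambda^q\in(-1,1)$, not $\lambda^q<0$. For example, $\lambda=\rho\e^{2\pi i/q}$ gives $\lambda^q=\rho^q>0$, $s^*=2/(1-\rho^q)$, $\Gamma_\lambda(s^*)=1$, and hence $s_{\max}=s^*$. So all rays $\Arg\lambda=m\pi/q$ with $2\le m\le q-1$ must be excluded, not just the odd ones.

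The spoke skeleton \eqref{eq:spoke-skeleton} is a finite union of segments with empty interior, so it cannot supply interiority. The interiority you still ``have to show'' is the whole content of this case. The paper does it as follows.
\begin{itemize}
\item Put $\ell_0=\lfloor q/m\rfloor+1\le q-1$. Then $q<\ell_0m<2q$, so $\lambda$ and $\lambda^{\ell_0}$ lie strictly on opposite sides of the real axis, both with real part below $1$.
\item Hence the direction $-1$ lies in the interior of the tangent cone of $K_q(\lambda)$ at the vertex $1$.
\item The point $\Gamma_\lambda(s^*-\eta)=1-2\eta(1-\lambda^q)+\eta^2(1-\lambda^q)^2$ is real and just left of $1$, so it is an ordinary interior point for small $\eta>0$.
\item Lemma~\ref{lem:persistence} then makes $\lambda$ interior, which leaves only $m=1$.
\end{itemize}
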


\begin{proof}
The maximal transfer point lies on $\partial K_q(\lambda)$; otherwise Lemma~\ref{lem:persistence}, or direct continuation in $s$, would contradict boundary membership or maximality. Represent the contact by at most two vertices as in Theorem~\ref{thm:two-face}.

Assume first that $1<s_{\max}<s^*(\lambda)$ and that the contact lies in the relative interior of a nontrivial exposed face. Let $H$ be an affine supporting functional with $H\le0$ on the polygon and $H=0$ on that face, and put
\[
 g(s)=H(\Gamma_\lambda(s)).
\]
All other supporting inequalities are strict near the contact. If $g'(s_{\max})<0$, then slightly larger $s$ remains in the polygon, contradicting maximality. If $g'(s_{\max})>0$, then slightly smaller $s$ gives an ordinary interior point, contradicting Lemma~\ref{lem:persistence}. Thus $g'(s_{\max})=0$.

The function $g$ is quadratic. Since $\Gamma_\lambda(0)=1\in K_q(\lambda)$, one has
\[
 g(s)=C(s-s_{\max})^2,
 \qquad C\le0.
\]
If $C<0$, nearby points on both sides of the contact are strict interior intersections. If $C=0$, the entire transfer parabola lies in the supporting line. Writing $d=\lambda^q-1$,
\[
 \Gamma_\lambda(s)=1+2sd+s^2d^2,
\]
so $d$ and $d^2$ are real-linearly dependent. Since $d\ne0$, this forces $d\in\R$ and hence $\lambda^q\in\R$.

In the upper half-plane write $\lambda=\rho\e^{m\pi i/q}$ with $1\le m\le q-1$. If $m=1$, every nonconstant power lies strictly above the real axis, so the real-axis exposed face is only the vertex $1$. If $m\ge2$, put
\[
 \ell_0=\left\lfloor\frac qm\right\rfloor+1\le q-1.
\]
Then $q<\ell_0m<2q$, so $\lambda$ and $\lambda^{\ell_0}$ lie on opposite sides of the real axis; the real axis is not supporting. Both possibilities contradict a nontrivial exposed-face contact. Therefore an interior maximal transfer contact must be a vertex.

It remains to analyze $s_{\max}=s^*(\lambda)$. Put
\[
 u=\lambda^q,
 \qquad
 R(s)=1+s(u-1).
\]
At $s=s^*(\lambda)$, one has $|R(s)|=1$. Since $|\lambda|<1$, the only point of $K_q(\lambda)$ of modulus one is $1$, so $R(s^*)^2=1$. The sign $+1$ would imply $u=1$, hence $R(s^*)=-1$ and therefore $u=1-2/s^*\in\R$. Thus, in the upper half-plane, $\lambda=\rho\e^{m\pi i/q}$. If $m=1$, this is the first ray. If $m\ge2$, the powers $\lambda$ and $\lambda^{\ell_0}$ found above lie on opposite sides of the real axis and both have real part below $1$. Consequently the negative real direction from the vertex $1$ lies in the ordinary interior of the tangent cone of $K_q(\lambda)$ at $1$. For small $\eta>0$,
\[
 \Gamma_\lambda(s^*-\eta)
 =1-2\eta(1-u)+O(\eta^2)
\]
is then an ordinary interior point of the polygon. Lemma~\ref{lem:persistence} would make $\lambda$ interior. This leaves only the first rays. Conjugation gives the lower-half-plane statement.
\end{proof}

\subsection{Farey extremality}

Let $h/k<r/s$ be consecutive elements of $\F_q^+$. Then
\begin{equation}\label{eq:farey-adjacency}
 kr-hs=1,
 \qquad
 k+s>2q.
\end{equation}
For $x\in(h/k,r/s)$ define
\begin{equation}\label{eq:MN}
 M=k\left\lceil\frac qk\right\rceil,
 \qquad
 N=s\left\lceil\frac qs\right\rceil.
\end{equation}
For a real number $y$, write $\{y\}=y-\lfloor y\rfloor$.

\begin{lemma}[Extremal lifted residues]\label{lem:farey-extremal}
For every integer $L\in[q,2q]$,
\begin{equation}\label{eq:residue-extremal}
 \{Mx\}\le\{Lx\}\le\{Nx\},
\end{equation}
with equality on the left only for $L=M$ and on the right only for $L=N$. Moreover,
\begin{equation}\label{eq:ratio-extremal}
 \frac{\{Mx\}}M\le\frac{\{Lx\}}L,
 \qquad
 \frac{1-\{Nx\}}N\le\frac{1-\{Lx\}}L.
\end{equation}
\end{lemma}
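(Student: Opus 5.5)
The plan is to read the fractional parts as values of a linear functional on the integer lattice, and to use the unimodular basis supplied by Farey adjacency \eqref{eq:farey-adjacency}. Fix $x\in(h/k,r/s)$ and put
\[
 A=kx-h>0,\qquad B=r-sx>0 .
\]
The identity $kr-hs=1$ gives $sA+kB=1$, so $A<1/s$ and $B<1/k$. Since $\det\begin{pmatrix}k&h\\ s&r\end{pmatrix}=1$, every integer pair $(L,n)$ can be written uniquely as
\[
 (L,n)=\alpha(k,h)+\beta(s,r),\qquad \alpha,\beta\in\Z .
\]
In these coordinates $Lx-n=\alpha A-\beta B$ and $L=\alpha k+\beta s$. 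Write $M=uk$ and $N=vs$ with $u=\lceil q/k\rceil$ and $v=\lceil q/s\rceil$.

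\emph{Sign pattern.} Fix $L\in[q,2q]$ and let $n=\lfloor Lx\rfloor$, so that $\{Lx\}=\alpha A-\beta B\in[0,1)$. The coefficients cannot satisfy $\alpha,\beta\ge1$, because then $L\ge k+s>2q$. They cannot satisfy $\alpha,\beta\le0$, because then $L\le0$. If $\alpha\le0<\beta$, the value $\alpha A-\beta B$ is negative, which is impossible. Hence $\alpha\ge1$ and $\beta\le0$.

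\emph{Lower bounds.}
\begin{enumerate}
\item From $\alpha k=L+|\beta|s\ge q$ we get $\alpha\ge u$.
\item Therefore $\{Lx\}=\alpha A+|\beta|B\ge uA$, with equality only when $\beta=0$ and $\alpha=u$, that is, only when $L=M$.
\item The pair $(M,uh)$ gives $Mx-uh=uA$. This is the fractional part $\{Mx\}$ provided $uA<1$. Since $uA<u/s$, it suffices to check $u\le s$. If $k>q$, then $u=1$. If $k\le q$, then $s>2q-k\ge q\ge u$.
\item This proves the left inequality in \eqref{eq:residue-extremal}, together with its equality case.
\item For the ratio, the numerator satisfies $\{Lx\}\ge\alpha A$ and the denominator satisfies $L\le\alpha k$. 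Hence
\[
 \frac{\{Lx\}}{L}\ge\frac{A}{k}=\frac{uA}{uk}=\frac{\{Mx\}}{M}.
\]
\end{enumerate}

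\emph{Upper bounds.} Apply the same argument to $1-\{Lx\}=(n+1)-Lx\in(0,1]$, written in the same basis. The value is now $\beta'B-\alpha'A$, and the roles of $(k,A)$ and $(s,B)$ are exchanged. The sign analysis forces $\beta'\ge v$ and $\alpha'\le0$. This gives $1-\{Lx\}\ge vB$, with equality only for $L=N$. Checking $vB<1$ goes exactly as in step 3, using $v\le k$, and shows that $1-\{Nx\}=vB$. The ratio bound $(1-\{Lx\})/L\ge B/s$ follows as in step 5.

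\emph{Main obstacle.} The only delicate point is the claim that $uA$ and $vB$ really are the fractional parts, i.e.\ that they are less than $1$. A naive comparison of the floors $\lfloor Lx\rfloor$ and $\lfloor Mx\rfloor$ is awkward when $k$ is small. The lattice reformulation, combined with the inequality $k+s>2q$, is exactly what avoids that difficulty.
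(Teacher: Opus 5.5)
Your proof is correct, and it runs on the same integers as the paper's proof, organized differently. Solving $(L,n)=\alpha(k,h)+\beta(s,r)$ gives $\alpha=rL-sn$ and $\beta=kn-hL$. These are exactly the paper's $K$ and $-H$.

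\textbf{The paper's route.} It obtains $H\ge0$ by quoting the Farey property that $n/L$ cannot lie strictly inside the cell. It then writes $\{Lx\}=(LA+H)/k$ and splits into two cases:
\begin{itemize}
\item for $L\ge M$ the inequality is immediate;
\item for $L<M$ it needs $K>0$, the identity $kK-sH=L$, the bound $K\ge u$, and then $sH\ge M-L$ together with $sA<1$.
\end{itemize}

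\textbf{Your route.} You derive the sign pattern $\alpha\ge1$, $\beta\le0$ directly from $A,B>0$, $q\le L\le 2q$ and $k+s>2q$. Through the basis change (equivalently $sA+kB=1$), you write the fractional part as the positive combination $\alpha A+|\beta|B$. Then the single bound $\alpha\ge u$ settles every $L$ at once, with no case split, and the equality case comes out directly. Your ratio bound follows from $\{Lx\}\ge\alpha A$ and $L\le\alpha k$.

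\textbf{Comparison.} Your version is more self-contained: it reproves the needed piece of Farey adjacency instead of quoting it. It also treats the upper inequality in the same basis via $(L,n+1)$, rather than by the reflection $x\mapsto 1-x$. The paper's version keeps the Farey interpretation $\lfloor Lx\rfloor/L\le h/k$ in view, which gives its ratio inequality in one line. Your check that $uA<1$, splitting on $k>q$ versus $k\le q$, is equivalent to the paper's observation $q<(k+s)/2\le ks$, which yields $u\le s$.
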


\begin{proof}
Write
\[
 u=\left\lceil\frac qk\right\rceil,
 \qquad M=uk,
 \qquad A=kx-h.
\]
Since $x\in(h/k,r/s)$ and $kr-hs=1$,
\[
 0<A<\frac1s.
\]
The denominator inequality gives $q<(k+s)/2\le ks$, hence $u\le s$ and $0<uA<1$. Therefore $\{Mx\}=uA$.

Fix $L\in[q,2q]$ and put $n=\lfloor Lx\rfloor$. The rational $n/L$ cannot lie strictly inside the Farey cell, so $n/L\le h/k$. Hence
\[
 H=hL-kn\in\Z_{\ge0},
 \qquad
 \{Lx\}=\frac{LA+H}{k}.
\]
If $L\ge M$, this immediately gives $\{Lx\}\ge\{Mx\}$, with equality only at $L=M$. If $L<M$, put
\[
 K=rL-sn\in\Z_{>0}.
\]
The determinant identity gives
\[
 kK-sH=L.
\]
Because $L\ge q>(u-1)k$, one has $K\ge\lceil L/k\rceil\ge u$. Therefore
\[
 sH=kK-L\ge ku-L=M-L.
\]
Since $A<1/s$, it follows that $H>(M-L)A$, and hence
\[
 \{Lx\}-\{Mx\}=\frac{H-(M-L)A}{k}>0.
\]
This proves the lower extremal claim. Applying the same argument to $1-x$ and the reversed Farey pair proves the upper claim.

Finally,
\[
 \frac{\{Lx\}}L=x-\frac{\lfloor Lx\rfloor}{L}
 \ge x-\frac hk
 =\frac{\{Mx\}}M,
\]
and applying this calculation to $1-x$, with the Farey endpoints reversed, gives the second ratio inequality.
\end{proof}

\begin{lemma}[Farey half-angle bounds]\label{lem:half-angle}
With $M=uk$ and $N=vs$ as above,
\begin{equation}\label{eq:half-angle-bounds}
 2u\le s,
 \qquad
 2v\le k,
\end{equation}
and the two equalities cannot hold simultaneously.
\end{lemma}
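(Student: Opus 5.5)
The plan is to prove the two inequalities by separate elementary case analyses on the size of the denominators, using only the adjacency data \eqref{eq:farey-adjacency} and the range of $\F_q^+$. The incompatibility of the two equalities will then follow from a parity argument.

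First I record the trivial facts. Every element of $\F_q^+$ lies in $[1/(2q),1/2]$, so no endpoint equals $0/1$ or $1/1$. Since $q\ge2$, this forces $k\ge2$ and $s\ge2$. Also $k,s\le 2q$, and $k+s>2q$ by \eqref{eq:farey-adjacency}.

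For $2u\le s$, I split according to whether $k\ge q$ or $k<q$.

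If $k\ge q$, then $q/k\in(0,1]$, so $u=\lceil q/k\rceil=1$. The claim then reads $s\ge2$, which was noted above.

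If $2\le k<q$, adjacency gives $s\ge 2q-k+1$. I bound $u\le (q+k-1)/k$. It therefore suffices to show
\[
 \frac{2(q+k-1)}{k}\le 2q-k+1 .
\]
After clearing denominators this becomes
\[
 2q(k-1)\ge k^2+k-2=(k-1)(k+2).
\]
Since $k>1$, this is equivalent to $2q\ge k+2$, which holds because $k\le q-1$ and $q\ge 3$; the bound is only needed in this branch, and for $q=2$ the branch $k<q$ does not occur since $k\ge2$.

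The inequality $2v\le k$ is the mirror statement. It follows by the identical argument with the roles of $(h/k,u)$ and $(r/s,v)$ exchanged: split on $s\ge q$ versus $s<q$, and use $k\ge 2q-s+1$ in the second case. Only the symmetric hypotheses $k,s\ge2$ and $k+s>2q$ were used, so the exchange is legitimate.

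For the final claim, suppose both $2u=s$ and $2v=k$. Then $k$ and $s$ are both even. But $kr-hs=1$ forces $\gcd(k,s)=1$, a contradiction.

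I do not expect a real obstacle. The only delicate points are two bookkeeping checks. The first is that the boundary fractions (in particular $f_0=1/(2q)$ with $k=2q$, and $f_\nu=1/2$ with $s=2$) satisfy the hypotheses. The second is that the small-$k$ estimate $2\lceil q/k\rceil\le 2q-k+1$ is carried out with the ceiling bound rather than $q/k$ itself. If that estimate turned out too tight in some corner, I would fall back on checking $k=2$ directly ($2\lceil q/2\rceil\le q+1\le 2q-1$) and handling $k\ge3$ with the slack $2q\ge k+2$.
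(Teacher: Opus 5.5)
Your proof is correct, but it takes a different route from the paper's.

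The paper argues uniformly. From $q<(k+s)/2$ and the elementary inequalities $(k+s)/2\le k\lfloor s/2\rfloor$ and $(k+s)/2\le s\lfloor k/2\rfloor$, which hold for all integers $k,s\ge2$ except the unordered pair $\{2,3\}$, it obtains $u\le\lfloor s/2\rfloor$ and $v\le\lfloor k/2\rfloor$ in one stroke. It then discards the exceptional pair using $k+s>2q\ge6$, that is, the standing assumption $q\ge3$.

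You instead split on the size of $k$:
\begin{itemize}
\item If $k\ge q$, then $u=1$ and only $s\ge2$ is needed.
\item If $k<q$, then $s\ge 2q-k+1$, and the bound $\lceil q/k\rceil\le (q+k-1)/k$ reduces the claim to $(k-1)(2q-k-2)\ge0$.
\end{itemize}
In the second case only $k\le q-1$ is actually needed, since $k+2\le q+1\le 2q$; the appeal to $q\ge3$ is harmless but unnecessary.

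Your version costs a two-case computation, but it has no exceptional configuration. It therefore also covers $q=2$, where the pair $\{2,3\}$ genuinely occurs as the cell $1/3<1/2$ and the lemma still holds. The paper's version is shorter and delivers the floor form $u\le\lfloor s/2\rfloor$ directly. The concluding parity argument, using $\gcd(k,s)=1$ from $kr-hs=1$, is the same in both.
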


\begin{proof}
Since $k+s>2q$,
\[
 q<\frac{k+s}{2}.
\]
For integers $k,s\ge2$, except for the unordered pair $\{2,3\}$,
\[
 \frac{k+s}{2}\le k\left\lfloor\frac s2\right\rfloor,
 \qquad
 \frac{k+s}{2}\le s\left\lfloor\frac k2\right\rfloor.
\]
The exceptional pair is incompatible with $k+s>2q\ge6$. Hence
\[
 u=\left\lceil\frac qk\right\rceil\le\left\lfloor\frac s2\right\rfloor,
 \qquad
 v\le\left\lfloor\frac k2\right\rfloor,
\]
which proves \eqref{eq:half-angle-bounds}. Simultaneous equality would make both $k$ and $s$ even, contradicting $kr-hs=1$.
\end{proof}

\subsection{Common chord and logarithmic support}

\begin{theorem}[Two-ray chord theorem]\label{thm:chord}
Let $u,v>0$ and let $\alpha,\beta>0$ satisfy $\alpha+\beta<\pi$. For $R\ge1$, put
\[
 A(R)=R^u\e^{i\alpha},
 \qquad
 B(R)=R^v\e^{-i\beta}.
\]
The segment $[A(R),B(R)]$ meets the positive real axis at radius
\begin{equation}\label{eq:chord-radius}
 h(R)=\frac{R^{u+v}\sin(\alpha+\beta)}{R^u\sin\alpha+R^v\sin\beta}.
\end{equation}
The function $h$ is smooth and strictly increasing, $h(1)<1$, and $h(R)\to\infty$. Hence there is a unique $R_0>1$ for which
\[
 1\in(A(R_0),B(R_0)).
\]
The corresponding barycentric coordinate is
\begin{equation}\label{eq:chord-coordinate}
 t(R)=\frac{R^u\sin\alpha}{R^u\sin\alpha+R^v\sin\beta}\in(0,1).
\end{equation}
The solution $R_0$ and the coefficient $t(R_0)$ depend real-analytically on auxiliary parameters as long as the strict angle conditions persist.

If $H$ is an affine functional whose zero set is the line through $A(R_0)$ and $B(R_0)$, and $t_0=t(R_0)$, then
\begin{equation}\label{eq:chord-transversality}
 dH\left(\left.\frac{d}{dR}\bigl((1-t_0)A(R)+t_0B(R)\bigr)\right|_{R=R_0}\right)
 =h'(R_0)dH(1)\ne0.
\end{equation}
\end{theorem}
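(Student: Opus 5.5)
The proof is elementary planar geometry: compute where the chord meets the real axis, show that this crossing radius is monotone in $R$, and then differentiate. I would argue as follows.

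\emph{Crossing radius.} Parametrize the chord as $P(t)=(1-t)A(R)+tB(R)$. Since $\Im A(R)=R^u\sin\alpha>0$ and $\Im B(R)=-R^v\sin\beta<0$, the imaginary part vanishes at exactly one $t\in(0,1)$. Solving $(1-t)R^u\sin\alpha=tR^v\sin\beta$ gives
\[
 t=\frac{R^u\sin\alpha}{R^u\sin\alpha+R^v\sin\beta},
\]
which is the formula \eqref{eq:chord-coordinate}. Substituting into the real part gives
\[
 (1-t)R^u\cos\alpha+tR^v\cos\beta
 =\frac{R^{u+v}(\sin\beta\cos\alpha+\sin\alpha\cos\beta)}{R^u\sin\alpha+R^v\sin\beta},
\]
and the numerator is $R^{u+v}\sin(\alpha+\beta)$. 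This is \eqref{eq:chord-radius}, and it is positive because $0<\alpha+\beta<\pi$. It remains to check the value at $R=1$:
\[
 h(1)=\frac{\sin(\alpha+\beta)}{\sin\alpha+\sin\beta}<1,
\]
since $\sin(\alpha+\beta)=\sin\alpha\cos\beta+\cos\alpha\sin\beta<\sin\alpha+\sin\beta$, using $\cos\alpha,\cos\beta<1$ together with $\sin\alpha,\sin\beta>0$.

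\emph{Monotonicity.} Write
\[
 \frac1{h(R)}=\frac{R^{-v}\sin\alpha+R^{-u}\sin\beta}{\sin(\alpha+\beta)}.
\]
This is a positive combination of strictly decreasing functions, because $u,v>0$, and it tends to $0$ as $R\to\infty$. Hence $h$ is smooth, strictly increasing with $h'(R)>0$ everywhere, and $h(R)\to\infty$. The intermediate value theorem then gives a unique $R_0>1$ with $h(R_0)=1$. At that $R_0$ the crossing point is $1$, and it lies in the open chord because $t(R_0)\in(0,1)$.

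\emph{Analytic dependence.} Let $G=1/h$. Then $G(R_0)=1$ and $\partial_R G=-h'/h^2\neq0$. The function $G$ is real-analytic in $R$ and in the auxiliary parameters $(\alpha,\beta,u,v)$ wherever $\sin(\alpha+\beta)>0$. The analytic implicit function theorem therefore gives analytic dependence of $R_0$, and $t(R_0)$ inherits it by composition. The strict angle conditions guarantee that the hypotheses persist under small perturbations of the parameters.

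\emph{Transversality.} Fix $t_0=t(R_0)$ and set $\gamma(R)=(1-t_0)A(R)+t_0B(R)$. Decompose $\gamma'(R_0)$ as a real multiple of $1$ plus a component parallel to the chord direction $B(R_0)-A(R_0)$. The chord component is annihilated by $dH$, so only the first part matters. To find it, differentiate the identity $(1-t(R))A(R)+t(R)B(R)=h(R)$:
\[
 \gamma'(R_0)-t'(R_0)\bigl(A(R_0)-B(R_0)\bigr)=h'(R_0).
\]
Applying $dH$ kills the chord term and yields $dH(\gamma'(R_0))=h'(R_0)\,dH(1)$. This is nonzero because $h'(R_0)>0$, and because $1$ is not parallel to the chord: the chord meets the real axis transversally, as $\Im A(R_0)>0>\Im B(R_0)$.

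The only delicate point is this transversality bookkeeping, namely recognizing that the derivative at fixed $t_0$ differs from $h'$ only by a chord-parallel vector. Everything else is explicit computation.
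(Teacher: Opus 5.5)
Your proposal is correct and follows the paper's proof essentially step for step. You solve the vanishing imaginary part for $t(R)$, substitute to get $h(R)$, establish monotonicity and $h(1)<1$, invoke the implicit function theorem, and differentiate $(1-t(R))A(R)+t(R)B(R)=h(R)$ so that $dH$ kills the chord-parallel term. The only differences are cosmetic: you get monotonicity from $1/h=(R^{-v}\sin\alpha+R^{-u}\sin\beta)/\sin(\alpha+\beta)$ rather than the paper's logarithmic derivative $(vX+uY)/(X+Y)$, and you justify $dH(1)\ne0$ by the chord not being horizontal rather than by the line missing the origin, which are equivalent for a line through $1$.
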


\begin{proof}
Solving the vanishing-imaginary-part condition for $(1-t)A+tB$ gives \eqref{eq:chord-coordinate}, and substitution gives \eqref{eq:chord-radius}. Put
\[
 X=R^u\sin\alpha,
 \qquad
 Y=R^v\sin\beta.
\]
Then
\[
 \frac{d\log h}{d\log R}
 =u+v-\frac{uX+vY}{X+Y}
 =\frac{vX+uY}{X+Y}>0.
\]
Also
\[
 h(1)=\frac{\sin(\alpha+\beta)}{\sin\alpha+\sin\beta}<1,
\]
and the numerator in \eqref{eq:chord-radius} has exponent $u+v$, while the denominator has maximal exponent $\max\{u,v\}$, so $h(R)\to\infty$. Analytic dependence follows from the implicit-function theorem because $h'(R_0)>0$.

Let $t(R)$ be the analytic coordinate for which
\[
 (1-t(R))A(R)+t(R)B(R)=h(R).
\]
Differentiate at $R_0$ and apply the linear part $dH$. The term involving $t'(R_0)(B(R_0)-A(R_0))$ vanishes because the chord direction is tangent to the zero line of $H$. The line passes through $1$ but not through the origin, so $dH(1)\ne0$, proving \eqref{eq:chord-transversality}.
\end{proof}

\begin{lemma}[Strict logarithmic support inequality]\label{lem:log-support}
Let $\Lambda$ be a line through
\[
 1,
 \qquad R^u\e^{i\eps\alpha},
 \qquad R^v\e^{-i\eps\beta},
\]
where $u,v>0$, $\alpha,\beta>0$, $\alpha+\beta<\pi$, and $\eps\in\{\pm1\}$. Let $\psi(\phi)$ be the logarithm of the positive radius at which the ray of angle $\phi$ meets $\Lambda$. Then $\psi(0)=0$, $\psi''>0$ on the positive-intersection interval, and for $0\le r,t\le1$,
\begin{equation}\label{eq:log-support}
 \psi\bigl(\eps(r\alpha-t\beta)\bigr)
 \le r\psi(\eps\alpha)+t\psi(-\eps\beta),
\end{equation}
with equality only for
\[
 (r,t)=(0,0),(1,0),(0,1).
\]
\end{lemma}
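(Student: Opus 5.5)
The plan is to reduce \eqref{eq:log-support} to the convexity of a single function of two variables on the unit square. Since $\Lambda$ passes through $1$, it does not pass through the origin. Hence it has a polar equation $\rho(\phi)=d/\cos(\phi-\phi_0)$ with $d>0$, and this equation is valid on the open interval $|\phi-\phi_0|<\pi/2$ of directions meeting $\Lambda$ at positive radius. On that interval
\[
 \psi(\phi)=\log d-\log\cos(\phi-\phi_0),
 \qquad
 \psi''(\phi)=\sec^2(\phi-\phi_0)>0 .
\]
Since $1\in\Lambda$, we get $\psi(0)=0$. The directions $0$, $\eps\alpha$ and $-\eps\beta$ all meet $\Lambda$ at positive radius, and the positive-intersection interval is an interval. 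Therefore it contains the whole closed angular range between $-\eps\beta$ and $\eps\alpha$. For $(r,t)\in[0,1]^2$ we have $r\alpha-t\beta\in[-\beta,\alpha]$, so every argument appearing in \eqref{eq:log-support} lies in this range.

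Define
\[
 f(r,t)=\psi\bigl(\eps(r\alpha-t\beta)\bigr)-r\psi(\eps\alpha)-t\psi(-\eps\beta)
\]
on the square $[0,1]^2$. The first term is a convex function $\psi$ composed with an affine map, and the remaining terms are linear, so $f$ is convex. Moreover, $f$ is strictly convex along every line whose direction $(\delta r,\delta t)$ satisfies $\alpha\,\delta r-\beta\,\delta t\ne0$. At the vertices, $f(0,0)=\psi(0)=0$, and $f(1,0)=f(0,1)=0$ directly. The remaining vertex gives
\[
 f(1,1)=\psi\bigl(\eps(\alpha-\beta)\bigr)+\psi(0)-\psi(\eps\alpha)-\psi(-\eps\beta).
\]
Both pairs of arguments have the same sum, and $\{\eps(\alpha-\beta),0\}$ lies strictly inside the interval spanned by $\{\eps\alpha,-\eps\beta\}$ because $\alpha,\beta>0$. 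The strict two-point convexity (majorization) inequality for the strictly convex $\psi$ then gives $f(1,1)<0$.

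For the inequality and the equality cases, split the square along the diagonal from $(1,0)$ to $(0,1)$ into two triangles. On the triangle with vertices $(0,0),(1,0),(0,1)$, consider the three edges first. Their directions are $(1,0)$, $(0,1)$ and $(1,-1)$, with $\alpha\,\delta r-\beta\,\delta t$ equal to $\alpha$, $-\beta$ and $\alpha+\beta$ respectively, all nonzero. So $f$ is strictly convex along each edge and vanishes at both endpoints, hence is strictly negative in each open edge. A point in the open interior of this triangle lies on a segment between two points of its boundary in a direction with $\alpha\,\delta r-\beta\,\delta t\ne0$ (such directions are generic). At those two boundary points $f\le0$, so strict convexity along the segment gives $f<0$ at the interior point. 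On the other triangle, with vertices $(1,0),(0,1),(1,1)$, any point other than $(1,0)$ and $(0,1)$ is a convex combination placing positive weight either on $(1,1)$ or on the open diagonal edge. Both of these carry strictly negative values, so convexity gives $f<0$ there. Hence $f\le0$ on the square, with equality exactly at $(0,0),(1,0),(0,1)$.

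The only delicate point is the domain bookkeeping in the first step, which justifies applying strict convexity of $\psi$ on the full range. This rests on the fact that the positive-intersection set of directions for a line avoiding the origin is a single open interval of length $\pi$, which must contain the whole range between $-\eps\beta$ and $\eps\alpha$. Once that is settled, everything else is elementary convexity on the square.
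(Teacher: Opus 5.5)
Your proof is correct, but it is organized differently from the paper's.

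\textbf{The paper's route.} It works directly with the polar form $\psi(\phi)=\log\cos\eta-\log\cos(\phi-\eta)$. The identity $\cos\eta\cos(u_0-v_0-\eta)-\cos(u_0-\eta)\cos(v_0+\eta)=\sin u_0\sin v_0$ gives the strict splitting inequality $\psi(u_0-v_0)<\psi(u_0)+\psi(-v_0)$ for $u_0,v_0>0$. This is then chained with the one-variable chord bounds $\psi(\eps r\alpha)\le r\psi(\eps\alpha)$ and $\psi(-\eps t\beta)\le t\psi(-\eps\beta)$. Strictness for $r,t>0$ comes from the first link, and the cases $r=0$ or $t=0$ reduce to the one-variable equality cases.

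\textbf{Your route.} You treat the defect $f(r,t)$ as a convex function on the unit square and check the four vertices, getting $f(1,1)<0$ by two-point majorization. You then extract strictness from a triangle decomposition.

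\textbf{Comparison.} Your route shows that the trigonometric identity is inessential. The paper's splitting inequality is exactly the majorization instance $\psi(u_0-v_0)+\psi(0)<\psi(u_0)+\psi(-v_0)$. So the lemma holds for any strictly convex $\psi$ with $\psi(0)=0$ on an interval containing the angles between $-\eps\beta$ and $\eps\alpha$. Conversely, the paper's chain handles the equality cases more economically than your case analysis. If you applied your majorization step at $(r\alpha,t\beta)$ rather than only at $(\alpha,\beta)$, you would get the same shortcut.

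Your domain bookkeeping is more explicit than the paper's appeal to ``the relevant angular interval.'' You lift the positive-intersection arc to a real interval that must contain $[-\beta,\alpha]$ (or its reflection when $\eps=-1$).

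One small slip: passing through $1$ does not by itself keep $\Lambda$ off the origin, since the real axis does both. The correct reason is that $\Lambda$ also contains the nonreal point $R^u\e^{i\eps\alpha}$ with $0<\alpha<\pi$.
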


\begin{proof}
Reflection reduces to $\eps=1$. Write the line as
\[
 \rho(\phi)\cos(\phi-\eta)=h,
 \qquad h>0.
\]
Since it passes through $1$, $h=\cos\eta$, and
\[
 \psi(\phi)=\log\cos\eta-\log\cos(\phi-\eta),
 \qquad
 \psi''(\phi)=\sec^2(\phi-\eta)>0.
\]
For $u_0,v_0>0$ in the relevant angular interval,
\[
 \psi(u_0)+\psi(-v_0)-\psi(u_0-v_0)
 =\log\frac{\cos\eta\cos(u_0-v_0-\eta)}{\cos(u_0-\eta)\cos(-v_0-\eta)}.
\]
The numerator minus the denominator inside the logarithm is $\sin u_0\sin v_0>0$. Hence
\[
 \psi(u_0-v_0)<\psi(u_0)+\psi(-v_0)
\]
when both arguments are nonzero. Strict convexity and $\psi(0)=0$ give
\[
 \psi(r\alpha)\le r\psi(\alpha),
 \qquad
 \psi(-t\beta)\le t\psi(-\beta),
\]
with equality only at the corresponding interval endpoints. Combining these inequalities gives \eqref{eq:log-support} and its equality statement.
\end{proof}

\subsection{The selected augmented spiral edge}

The reciprocal picture turns the boundary-selection problem into a support problem for finitely many points on an expanding logarithmic spiral. The relevant Farey cell identifies two candidate powers. What must be proved is stronger than incidence: their chord must expose the augmented hull, and every other power must lie strictly on the origin side.

For a nonreal $\lambda=\rho\e^{2\pi ix}$ with $\rho<1$, use the conjugate-reciprocal coordinate
\begin{equation}\label{eq:reciprocal}
 z=\overline\lambda^{-1}=R\e^{2\pi ix},
 \qquad
 R=\rho^{-1}>1.
\end{equation}
Set
\begin{equation}\label{eq:augmented-hull}
 Q_z=\conv\{z^q,z^{q+1},\ldots,z^{2q}\}.
\end{equation}

\begin{theorem}[Farey-selected augmented edge]\label{thm:augmented-edge}
Let $h/k<r/s$ be consecutive fractions in $\F_q^+$, let $x\in(h/k,r/s)$, and define $M,N$ by \eqref{eq:MN}. Then:
\begin{enumerate}[label=\textup{(\roman*)}]
\item $0\in\Int Q_z$ for every $R>1$;
\item there is a unique $R>1$ for which
\[
 1\in(z^M,z^N);
\]
\item at that value of $R$, the chord $[z^M,z^N]$ is an exposed edge of $Q_z$, and every other listed point $z^L$, $q\le L\le2q$, lies strictly in the open half-plane bounded by the chord line that contains the origin;
\item conversely, if a chord $[z^m,z^n]$ through $1$ supports $Q_z$, then $\{m,n\}=\{M,N\}$.
\end{enumerate}
\end{theorem}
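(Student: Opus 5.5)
The plan is to work in the conjugate-reciprocal picture \eqref{eq:reciprocal}. There $z^L=R^L\e^{2\pi iLx}$, so the position of $z^L$ is governed by the residue $\{Lx\}$ and the radius $R^L$. I would first record the angular data of the two distinguished points. Write $M=uk$, $N=vs$ and $A=kx-h$ as in the proof of Lemma~\ref{lem:farey-extremal}. That proof gives $\{Mx\}=uA\in(0,1)$, and by symmetry $1-\{Nx\}=v(r-sx)\in(0,1)$. Hence
\[
 z^M=R^M\e^{i\alpha},\qquad z^N=R^N\e^{-i\beta},
\]
with $\alpha,\beta$ exactly as in \eqref{eq:base-angles}. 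Since $s(kx-h)+k(r-sx)=kr-hs=1$, Lemma~\ref{lem:half-angle} gives
\[
 \frac{\alpha+\beta}{2\pi}=u(kx-h)+v(r-sx)\le\tfrac s2(kx-h)+\tfrac k2(r-sx)=\tfrac12 .
\]
This inequality is strict because the two equalities in Lemma~\ref{lem:half-angle} cannot hold simultaneously. So $\alpha,\beta>0$ and $\alpha+\beta<\pi$.

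For (i), the arguments of $z^q,\dots,z^{2q}$ advance in steps of $2\pi x\in(0,\pi)$, because $x<1/2$. Their total unwound span is $2\pi qx>\pi$, because $x>1/(2q)$. If the total span is at least $2\pi$, the points wind around the origin with gaps smaller than $\pi$. Otherwise they occupy an arc of length in $(\pi,2\pi)$. In either case no closed half-plane through $0$ contains them all, so $0\in\Int Q_z$. For (ii), I apply Theorem~\ref{thm:chord} with exponents $u\mapsto M$, $v\mapsto N$ and the angles $\alpha,\beta$ above. The theorem yields the unique $R_0>1$ with $1\in(z^M,z^N)$, together with the barycentric coordinate \eqref{eq:chord-coordinate}.

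For (iii), let $\Lambda$ be the line through $1,z^M,z^N$ at $R=R_0$, and let $\psi$ be its logarithmic radius function as in Lemma~\ref{lem:log-support}. Then $\psi(0)=0$, $\psi$ is strictly convex, $\psi(\alpha)=M\log R_0$ and $\psi(-\beta)=N\log R_0$. Rays whose direction does not meet $\Lambda$ lie entirely in the origin side, so any $z^L$ in such a direction is automatically fine. For $L\ne M,N$, Lemma~\ref{lem:farey-extremal} gives $\{Mx\}<\{Lx\}<\{Nx\}$. Hence $z^L$ has argument $\phi_L=2\pi\{Lx\}>\alpha$, which can equally be read as $2\pi(\{Lx\}-1)<-\beta$. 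In other words, it lies in the long arc away from $1$. Suppose the ray at angle $\phi_L>\alpha$ meets $\Lambda$. Because $\psi(0)=0$ and $\psi$ is convex, $\psi(\phi)/\phi$ is strictly increasing, so
\[
 \psi(\phi_L)>\frac{\phi_L}{\alpha}\psi(\alpha)=\frac{\{Lx\}}{\{Mx\}}M\log R_0\ge L\log R_0 ,
\]
where the last step is the ratio inequality \eqref{eq:ratio-extremal}. Thus $|z^L|<$ the radius of $\Lambda$ in that direction, i.e.\ $z^L$ lies strictly on the origin side. The negative-angle case is symmetric, using $1-\{Lx\}$ and the second ratio inequality. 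This shows that $\Lambda$ supports $Q_z$ and that $\Lambda\cap\{z^L\}=\{z^M,z^N\}$, so $[z^M,z^N]$ is an exposed edge.

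For (iv), suppose a chord $[z^m,z^n]$ through $1$ supports $Q_z$ at some $R$. By (i), $1$ is then the exit point of the positive ray from $Q_z$. The segment $[z^M,z^N]$ always crosses the positive axis at radius $h(R)$, and that crossing point lies in $Q_z$; hence $h(R)\le1$, and so $R\le R_0$ by monotonicity of $h$. The remaining task is the reverse inequality. One must show that for $R<R_0$ the point $1$ is not a boundary point on any supporting chord. Equivalently, the exit edge of the positive ray is $[z^M,z^N]$ for every $R>1$, so that $1\in\partial Q_z$ forces $h(R)=1$, hence $R=R_0$, and then (iii) identifies $\{m,n\}=\{M,N\}$. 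This is the step I expect to be the main obstacle, because $\psi(0)=\log h(R)\ne0$ breaks the ratio argument above. My first attempt would replace that argument with the secant extrapolation of the convex $\psi$ through $(-\beta,N\log R)$ and $(\alpha,M\log R)$, combined with the full residue-extremal inequalities \eqref{eq:residue-extremal}. Failing that, I would argue directly at the assumed supporting chord: all $z^L$ lie on the origin side of its line, and a Farey-adjacency argument on $(m,n)$ rules out any pair other than the consecutive lifts.
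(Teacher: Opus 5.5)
Parts (i)--(iii) of your proposal are correct and follow the paper's proof closely. This covers the angle bookkeeping through Lemma~\ref{lem:farey-extremal} and Lemma~\ref{lem:half-angle}, Theorem~\ref{thm:chord} for (ii), and the strict-convexity/ratio comparison of each $z^L$ against the selected line for (iii). Your bound $R\le R_0$ in (iv), obtained from $h(R)\le1$, is also fine. The genuine gap is part (iv): you identify the reverse inequality as the main obstacle and leave it unproved.

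Your diagnosis of the obstacle is misdirected. The failure $\psi(0)=\log h(R)\ne0$ occurs only because you keep testing against the line through $z^M,z^N$ at a general $R$. The hypothesized supporting line $\Lambda'\supset[z^m,z^n]$ passes through $1$ by assumption. Its own logarithmic radial function $\psi'$ therefore satisfies $\psi'(0)=0$, and the argument of (iii) runs with the roles reversed: test $z^M$ against $\Lambda'$.

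Concretely, since $0\in\Int Q_z$ and $|z^m|,|z^n|>1$, the point $1$ lies strictly between the endpoints. So one endpoint, say $z^m$, has argument $\phi=2\pi\{mx\}\in(0,\pi)$. Then $\psi'$ is defined and strictly convex on $[0,\phi]$, with $\psi'(\phi)=m\log R$. If $m\ne M$, Lemma~\ref{lem:farey-extremal} and \eqref{eq:ratio-extremal} give $\alpha<\phi$ and $\alpha/M\le\phi/m$, hence
\[
 \psi'(\alpha)<\frac{\alpha}{\phi}\,m\log R\le M\log R .
\]
Thus $z^M$, whose argument is $\alpha$, lies strictly beyond $\Lambda'$, contradicting support. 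The other endpoint is handled the same way with $\beta$ and the second ratio inequality. This works at every $R>1$ with no comparison to $R_0$; the equality $R=R_0$ then follows as a corollary. The ingredient needed is the extremality of $M$ and $N$ among all lifts $L\in[q,2q]$, not a Farey-adjacency property of $(m,n)$ as your second fallback suggests.

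Your first fallback is also not viable as stated. The claim that $[z^M,z^N]$ is the exit edge of the positive ray for every $R>1$ is false. Take $q=5$, the cell $1/5<2/9$, and $x=0.21$, so that $M=5$ and $N=9$. For large $R$, the chord $[z^9,z^{10}]$ crosses the ray through $z^5$ at a radius of order $R^9$, so $z^M$ becomes an interior point of $Q_z$. Only $R<R_0$ matters for your purpose, and that restricted claim would still need its own proof.
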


\begin{proof}
For (i), the origin fails to lie in the ordinary interior of $Q_z$ exactly when all vertex directions are contained in a closed semicircle. Put $\theta=2\pi x\in(\pi/q,\pi)$. If the directions $q\theta,(q+1)\theta,\ldots,2q\theta$ modulo $2\pi$ lay in an interval of length $\pi$, their lifted representatives would have to use the same winding number: a change of winding number would replace a consecutive difference $\theta$ by $\theta-2\pi<-\pi$. Their total lifted span would then be $q\theta>\pi$, a contradiction.

Put
\[
 M=uk,
 \qquad
 N=vs,
 \qquad
 A=kx-h,
 \qquad
 B=r-sx.
\]
Then $A,B>0$ and $sA+kB=1$. By Lemma~\ref{lem:half-angle},
\[
 \alpha=2\pi uA>0,
 \qquad
 \beta=2\pi vB>0,
 \qquad
 \alpha+\beta<\pi.
\]
Moreover,
\[
 z^M=R^M\e^{i\alpha},
 \qquad
 z^N=R^N\e^{-i\beta}.
\]
Part (ii) and the incidence assertion in (iii) follow from Theorem~\ref{thm:chord}.

Let $\Lambda$ be the selected chord line and let $\psi$ be its logarithmic radial function. Consider $L\in[q,2q]$ with $L\ne M,N$. Since $x$ lies in an open Farey cell, $Lx\notin\Z$. Lemma~\ref{lem:farey-extremal} gives
\[
 \{Mx\}<\{Lx\}<\{Nx\}.
\]
If the principal argument of $z^L$ is positive, write it as $\phi$. Then $\phi>\alpha$ and
\[
 \frac\alpha M\le\frac\phi L.
\]
If the ray of angle $\phi$ does not meet $\Lambda$ at positive radius, it lies entirely on the origin side. Otherwise strict convexity of $\psi$ and $\psi(0)=0$ give
\[
 \psi(\phi)>\frac\phi\alpha\psi(\alpha)
 =\frac\phi\alpha M\log R
 \ge L\log R.
\]
Thus the line meets that ray beyond $z^L$, so $z^L$ lies strictly on the origin side. For a negative principal argument, replace $(\phi,\alpha,M)$ by $(-\phi,\beta,N)$ and use the second ratio inequality in \eqref{eq:ratio-extremal}. If a listed point lies on the negative real ray, that ray cannot meet the nonreal line $\Lambda$ through the positive point $1$ at positive radius. This proves exposedness and the strict side assertion.

Finally, suppose $[z^m,z^n]$ is a supporting chord containing $1$. Since $0\in\Int Q_z$, its endpoints lie on opposite sides of the positive real ray. Let $z^m$ be the positive-angle endpoint, with angle $\phi>0$. The segment from $1$ to $z^m$ meets every ray of angle in $[0,\phi]$ at a positive radius, so the logarithmic radial function of the supporting line is defined throughout that interval. If $m\ne M$, Lemma~\ref{lem:farey-extremal} gives $\phi>\alpha$ and $\alpha/M\le\phi/m$. The logarithmic radial function $\psi_\Lambda$ of the supporting line satisfies
\[
 \psi_\Lambda(\alpha)
 <\frac\alpha\phi\psi_\Lambda(\phi)
 =\frac\alpha\phi m\log R
 \le M\log R.
\]
Hence $z^M$ lies beyond the supporting line, a contradiction. Thus $m=M$. The same argument below the real axis gives $n=N$.
\end{proof}

\begin{figure}[htbp]
\centering
\includegraphics[width=0.77\textwidth]{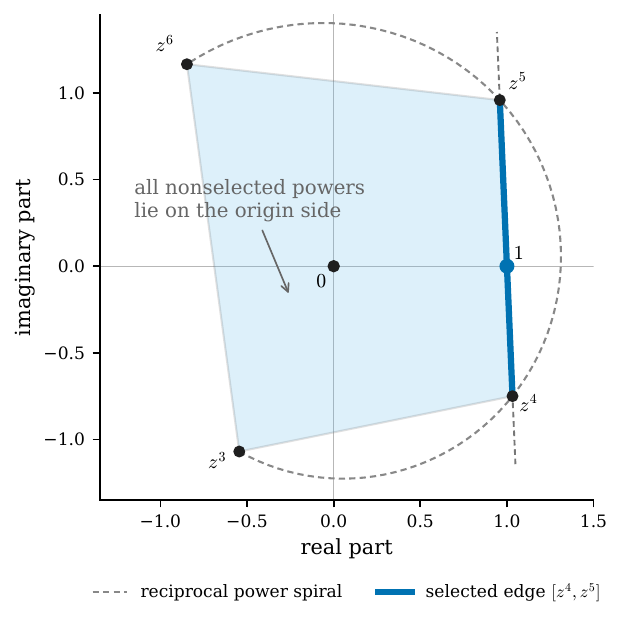}
\caption{The Farey-selected exposed chord in reciprocal coordinates for $q=3$ and $x=9/40$. The adjacent Farey endpoints select the exponents $4$ and $5$. At the selected reciprocal radius, $1\in(z^4,z^5)$, the chord $[z^4,z^5]$ supports $Q_z=\conv\{z^3,z^4,z^5,z^6\}$, and the nonselected powers lie strictly in the open half-plane containing the origin. This is the geometric conclusion of Theorem~\ref{thm:augmented-edge}.}
\label{fig:farey-exposed-chord}
\end{figure}
\FloatBarrier

Figure~\ref{fig:farey-exposed-chord} is the concrete target of the logarithmic-support argument: Farey arithmetic selects not just a pair of exponents, but the unique pair whose chord can be visible from outside the augmented hull.

\section{Farey selection of the boundary edges}\label{sec:selection}

\subsection{Base-edge contacts}

In the reciprocal coordinates $z=\overline\lambda^{-1}=R\e^{2\pi ix}$ from \eqref{eq:reciprocal}, which preserve $x$ and reverse radial order, define
\begin{equation}\label{eq:reciprocal-transfer}
 W_z(s)=\bigl(s-(s-1)z^q\bigr)^2,
 \qquad
 P_z=\conv\{z^{q+1},\ldots,z^{2q}\}.
\end{equation}
The transfer problem is carried to this picture by the conjugate-linear equivalence
\begin{equation}\label{eq:reciprocal-equivalence}
 W_z(s)=z^{2q}\overline{\Gamma_\lambda(s)},
 \qquad
 P_z=z^{2q}\overline{K_q(\lambda)}.
\end{equation}
At $s=1$, one has $W_z(1)=1$.

\begin{theorem}[Selection of base edges]\label{thm:base-selection}
Let
\[
 \lambda\in\partial\Sigma_q\cap\D^\circ,
 \qquad
 \Im\lambda>0,
 \qquad
 \frac\pi q<\Arg\lambda<\pi,
\]
and let $f<g$ be the Farey cell containing $x=\Arg\lambda/(2\pi)$. If the maximal contact occurs at $s=1$, then both $L_q(f)$ and $L_q(g)$ exceed $q$, and the realizing support is exactly
\[
 \{J_q(f),J_q(g)\}.
\]
Thus the only possible base-edge boundary carrier in the cell is \eqref{eq:base-edge} for the edge $E_{f,g}$.
\end{theorem}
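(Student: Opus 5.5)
We work in the reciprocal picture \eqref{eq:reciprocal-transfer}. The plan is to convert the maximal contact at $s=1$ into a chord through $1$ that supports the augmented hull $Q_z$, and then let Theorem~\ref{thm:augmented-edge}(iv) identify the exponents.

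\emph{Step 1: the contact is a genuine two-vertex chord.} Since $\Arg\lambda\in(\pi/q,\pi)$ and $\lambda^q\ne1$, Proposition~\ref{prop:maximal-trichotomy} places us in case (i). Thus
\[
 \lambda^{2q}=\Gamma_\lambda(1)=(1-t)\lambda^j+t\lambda^k
\]
with $\lambda^j,\lambda^k$ spanning an exposed face of $K_q(\lambda)$.

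If $t\in\{0,1\}$, then $\lambda^{2q}=\lambda^j$ with $0<|\lambda|<1$. This would force $\lambda^{2q-j}=1$, which is impossible. So $j\ne k$, $t\in(0,1)$, and the contact lies in the relative interior of an edge.

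By the conjugate-linear equivalence \eqref{eq:reciprocal-equivalence}, which preserves exposed faces and relative interiors, we get
\[
 1=W_z(1)\in(z^m,z^n),\qquad m=2q-j,\quad n=2q-k,\qquad q+1\le m,n\le 2q,
\]
and $[z^m,z^n]$ is an exposed edge of $P_z$. Let $H$ be an affine functional vanishing on the chord line with $H\le0$ on $P_z$.

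\emph{Step 2: the chord also supports $Q_z$.} This is the main obstacle: we must show $H(z^q)\le0$, i.e.\ the extra point $z^q$ cannot lie beyond the chord. We argue by maximality. For $s=1+\eta$ with small $\eta>0$,
\[
 W_z(1+\eta)=\bigl(1+\eta(1-z^q)\bigr)^2=1+2\eta(1-z^q)+O(\eta^2).
\]
Since $H(1)=0$,
\[
 \frac{d}{d\eta}H\bigl(W_z(1+\eta)\bigr)\Big|_{\eta=0}=2\,dH(1-z^q)=-2H(z^q).
\]
If $H(z^q)>0$, this derivative is negative, so $W_z(1+\eta)$ moves strictly into the side of $P_z$. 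Because $1$ is in the relative interior of an exposed edge, all other supporting inequalities of $P_z$ are strict near $1$. Hence $W_z(1+\eta)\in P_z$ for all small $\eta>0$.

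We also have $s^*(\lambda)>1$, since $|\lambda|<1$ makes $1-|\lambda|^{2q}>0$. So these values of $s$ are admissible, and they contradict $s_{\max}=1$. Therefore $H(z^q)\le0$, and the chord line weakly supports $Q_z=\conv(P_z\cup\{z^q\})$. Weak support is all that part (iv) of Theorem~\ref{thm:augmented-edge} requires, so the degenerate case $H(z^q)=0$ needs no second-order analysis.

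\emph{Step 3: conclusion.} Since $x$ lies in the open Farey cell $(f,g)$, Theorem~\ref{thm:augmented-edge}(iv) gives $\{m,n\}=\{M,N\}=\{L_q(f),L_q(g)\}$.

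Both $m$ and $n$ are at least $q+1$, so $L_q(f),L_q(g)>q$. This rules out the label $*$ at either endpoint, so $E_{f,g}$ is a base edge. Finally, $\{j,k\}=\{2q-M,2q-N\}=\{J_q(f),J_q(g)\}$, so the realizing matrix lies on $E_{f,g}$ and satisfies \eqref{eq:base-edge}.
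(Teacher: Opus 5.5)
Your proof is correct and follows essentially the same route as the paper. Both pass to the reciprocal picture, use the expansion $W_z(1+\eta)=1+2\eta(1-z^q)+O(\eta^2)$ to force $H(z^q)\le0$ so that the chord weakly supports $Q_z$, and then apply Theorem~\ref{thm:augmented-edge}(iv). The differences are cosmetic: you obtain the supporting edge from Proposition~\ref{prop:maximal-trichotomy} rather than directly from Lemma~\ref{lem:persistence}, and you contradict $s_{\max}=1$ instead of producing an interior point $W_z(1+\eta)\in\Int P_z$ and invoking Lemma~\ref{lem:persistence}.
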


\begin{proof}
A one-site contact would satisfy $\lambda^{2q}=\lambda^j$ for some $j\le q-1$, which is impossible for a nonreal $\lambda\in\D^\circ$. Hence the contact is strict:
\[
 \lambda^{2q}=(1-t)\lambda^j+t\lambda^k,
 \qquad
 0<t<1,
 \qquad j\ne k.
\]
In reciprocal coordinates this becomes
\[
 1=(1-t)z^m+tz^n,
 \qquad
 m=2q-j,
 \qquad
 n=2q-k,
 \qquad q<m,n\le2q.
\]
If $1$ were an ordinary interior point of $P_z$, Lemma~\ref{lem:persistence} would make $\lambda$ interior. Therefore the line through $z^m$ and $z^n$ supports $P_z$ at $1$.

Let $H$ be an affine defining functional with $H=0$ on this line and $H<0$ on the polygon side. Since
\[
 W_z'(1)=2(1-z^q),
\]
and $H(1)=0$,
\[
 \left.\frac{d}{ds}H(W_z(s))\right|_{s=1}=-2H(z^q).
\]
If $H(z^q)>0$, then for all sufficiently small $\eta>0$, the point $W_z(1+\eta)$ crosses the active supporting line into the polygon side. Because $1$ lies in the relative interior of the exposed face containing $z^m,z^n$, every other supporting inequality remains strict; hence $W_z(1+\eta)\in\Int P_z$. This again contradicts Lemma~\ref{lem:persistence}. Therefore $H(z^q)\le0$, so the same line supports the augmented hull $Q_z$.

Theorem~\ref{thm:augmented-edge} now forces
\[
 \{m,n\}=\{L_q(f),L_q(g)\}.
\]
Since $m,n>q$, both lifts exceed $q$, and the original support indices are $2q-m=J_q(f)$ and $2q-n=J_q(g)$.
\end{proof}

\subsection{One-site contacts and the half-integral spiral}

A one-site transfer contact has the form
\begin{equation}\label{eq:one-site-contact}
 \Gamma_\lambda(s_0)=\lambda^j,
 \qquad
 1<s_0<s^*(\lambda).
\end{equation}
Put
\begin{equation}\label{eq:xi-def}
 \xi=1+s_0(\lambda^q-1),
 \qquad
 \xi^2=\lambda^j,
 \qquad
 \Lambda=\aff\{\xi,\xi^2\}.
\end{equation}
The line $\Lambda$ is the tangent line of the transfer parabola at the vertex contact.

\begin{lemma}[Polygon tangent-cone entry]\label{lem:tangent-cone}
Let $K\subset\C$ be a two-dimensional compact polygon, let $X\in K$, and let $u\ne0$. If the line $\aff\{X,X+u\}$ does not support $K$ at $X$, then one of the directions $u,-u$ enters the ordinary interior robustly: there are $\sigma\in\{\pm1\}$ and $\eta,\tau_0>0$ such that
\[
 X+\tau\sigma u+v\in\Int K
\]
whenever $0<\tau<\tau_0$ and $|v|\le\eta\tau$.
\end{lemma}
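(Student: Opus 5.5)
The plan is to reduce the statement to a property of the tangent cone of $K$ at $X$, using the fact that a polygon coincides locally with its tangent cone. The rest is an elementary case analysis in the plane.

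\emph{Step 1: local conic structure.} Let $T=\overline{\bigcup_{\mu>0}\mu(K-X)}$ be the tangent cone of $K$ at $X$. Since $K$ is a compact convex polygon, $T$ is a closed convex polyhedral cone, generated by the edge directions of $K$ at $X$. There is also a radius $r>0$ with
\[
 K\cap\{|Y-X|<r\}=(X+T)\cap\{|Y-X|<r\}.
\]
Because $K$ is two-dimensional, $T$ is two-dimensional. Consequently $T$ is either the whole plane (when $X\in\Int K$), a closed half-plane (when $X$ lies in the relative interior of an edge), or a closed wedge of aperture strictly less than $\pi$ bounded by two rays $\R_{\ge0}w_1$ and $\R_{\ge0}w_2$ (when $X$ is a vertex).

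\emph{Step 2: translate the non-support hypothesis.} The line $\aff\{X,X+u\}$ supports $K$ at $X$ exactly when $K-X$ lies in a closed half-plane bounded by $\R u$. Equivalently, $T$ lies in such a half-plane, since $T$ is the closed conic hull of $K-X$. So the hypothesis says that $T$ meets both open half-planes determined by $\R u$. I would then show, case by case, that $\sigma u\in\Int T$ for some sign $\sigma$:
\begin{itemize}
\item If $T=\C$, there is nothing to prove.
\item If $T$ is a half-plane, its boundary line is not $\R u$, because otherwise $T$ would lie on one side. Hence exactly one of $\pm u$ points into the open half-plane $\Int T$.
\item If $T$ is a wedge of aperture below $\pi$, and $\R u$ separated neither $w_1$ from $w_2$ nor avoided $\Int T$, then $T$ would lie in one closed half-plane. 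Hence the line $\R u$ crosses the open wedge, and since the aperture is below $\pi$, exactly one of $\pm u$ lies in $\Int T$.
\end{itemize}
This case analysis is the only step requiring care; it is the main, though mild, obstacle. The wedge case needs the observation that a line through the apex of a convex wedge either supports it or passes through its interior.

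\emph{Step 3: robustness and localization.} Since $\Int T$ is an open cone containing $\sigma u$, there is $\eta>0$ with $\sigma u+v'\in\Int T$ whenever $|v'|\le\eta$. By homogeneity, $\tau\sigma u+v\in\Int T$ for every $\tau>0$ and $|v|\le\eta\tau$. Choose $\tau_0>0$ so that $\tau_0(|u|+\eta)<r$. For $0<\tau<\tau_0$ the point $Y=X+\tau\sigma u+v$ then lies in the open set $(X+\Int T)\cap\{|Y-X|<r\}$. By Step 1 this open set is contained in $K$, so it is contained in $\Int K$, which proves the lemma.
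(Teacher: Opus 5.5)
Your proposal is correct and follows essentially the paper's route. The paper also passes to the tangent cone at $X$, written as the intersection of the linearized active half-planes, notes that non-support means one of $\pm u$ lies in the ordinary interior of that cone, and gets robustness from strict first-order satisfaction of the active inequalities plus continuity of the inactive ones. Your whole-plane/half-plane/wedge case analysis just makes explicit the non-support-to-interior step that the paper asserts in one line, and your local identity $K\cap\{|Y-X|<r\}=(X+T)\cap\{|Y-X|<r\}$ plays the role of the paper's inactive-constraint continuity.
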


\begin{proof}
Write $K$ near $X$ as the intersection of finitely many affine half-planes and let $\mathcal A$ be the active inequalities at $X$. The tangent cone is the intersection of their linearized half-planes. The line fails to support precisely when one of $u,-u$ lies in the ordinary interior of this cone. All active inequalities are then strictly satisfied to first order in that direction, uniformly under $o(\tau)$ perturbations, while the inactive inequalities remain strict by continuity.
\end{proof}

\begin{proposition}[Exposed one-site boundary bridge]\label{prop:one-site-bridge}
Assume \eqref{eq:one-site-contact} and
\[
 \lambda\in\partial\Sigma_q\cap(\D^\circ\setminus\R).
\]
Then:
\begin{enumerate}[label=\textup{(\roman*)}]
\item the line $\Lambda$ supports $K_q(\lambda)$ at the vertex $\lambda^j$;
\item $\Im\xi\ne0$;
\item $\xi$ is not one of the listed powers $1,\lambda,\ldots,\lambda^{q-1}$;
\item the closed half-plane bounded by $\Lambda$ that contains $K_q(\lambda)$ also contains the origin.
\end{enumerate}
\end{proposition}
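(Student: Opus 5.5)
The plan is to derive everything from two facts: the local behaviour of the transfer parabola $\Gamma_\lambda$ near the contact parameter $s_0$, and the persistence principle of Lemma~\ref{lem:persistence}.

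\textbf{Setup.} Put $d=\lambda^q-1\ne0$, so that $\Gamma_\lambda(s)=(1+sd)^2$ and $\xi=1+s_0d$. Since $\lambda$ is nonreal we have $\lambda\ne0$, hence $\xi\ne0$; also $\xi\ne1$ because $s_0d\ne0$. Differentiating gives
\[
 \Gamma_\lambda'(s_0)=2\xi d=\frac{2(\xi^2-\xi)}{s_0}\neq0 .
\]
Hence $\Lambda=\aff\{\xi,\xi^2\}$ is a genuine line, and it is the tangent line to $\Gamma_\lambda$ at the contact point $X=\xi^2=\lambda^j$.

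\textbf{Part (i).} Suppose $\Lambda$ does not support $K_q(\lambda)$ at $\lambda^j$. Apply Lemma~\ref{lem:tangent-cone} with $u=\Gamma_\lambda'(s_0)$; it gives a sign $\sigma$ with
\[
 \Gamma_\lambda(s_0+\sigma\tau)=X+\tau\sigma u+O(\tau^2)\in\Int K_q(\lambda)
\]
for small $\tau>0$. Because $1<s_0<s^*(\lambda)$ is strict, both signs $\sigma=\pm1$ keep $s_0+\sigma\tau$ inside $(1,s^*(\lambda))$. Lemma~\ref{lem:persistence} then makes $\lambda$ an interior point of $\Sigma_q$, which is a contradiction.

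\textbf{Key geometric observation, and part (iv).} The curve $\Gamma_\lambda(\R)$ is the image of the line $\ell=\{1+sd:s\in\R\}$ under $w\mapsto w^2$. If $d$ is nonreal, $\ell$ misses $0$, and the image is a nondegenerate parabola with focus at the origin. A parabola lies on one side of each tangent line, and its focus lies in the open convex region it bounds. So the focus is on the same side of $\Lambda$ as the parabola, and strictly off $\Lambda$. The parabola passes through $\Gamma_\lambda(0)=1\in K_q(\lambda)$. By (i), $K_q(\lambda)$ lies in one closed half-plane of $\Lambda$, so that half-plane contains the whole parabola and therefore contains the origin. This proves (iv), once we know $d\notin\R$; that is settled in (ii).

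\textbf{Part (ii).} If $\Im\xi=0$, then $s_0d=\xi-1$ is real, so $\lambda^q$ is real. Also $\lambda^j=\xi^2$ is real, so $\Lambda$ is the real axis, and by (i) it supports $K_q(\lambda)$ at $\lambda^j$. Write $\lambda=\rho\e^{m\pi i/q}$ in the upper half-plane, as in Proposition~\ref{prop:maximal-trichotomy}.
\begin{itemize}
\item If $m\ge2$, the powers $\lambda$ and $\lambda^{\ell_0}$ lie on opposite sides of the real axis, so the axis does not support $K_q(\lambda)$.
\item If $m=1$, the only power on the axis is $1$. Then $j=0$ and $\xi=\pm1$. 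The value $\xi=1$ is already excluded. The value $\xi=-1$ gives $|1+s_0d|=1$, which forces $s_0\in\{0,s^*(\lambda)\}$, contradicting $1<s_0<s^*(\lambda)$.
\end{itemize}
In particular $d\notin\R$, so the focus argument used for (iv) is legitimate.

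\textbf{Part (iii), expected main obstacle.} Suppose $\xi=\lambda^i$ for some $0\le i\le q-1$. Then $\xi$ and $\xi^2$ are two distinct points of $K_q(\lambda)$ on the supporting line $\Lambda$. So $[\xi,\xi^2]$ lies in an exposed edge of $K_q(\lambda)$, and near $\lambda^j$ the only active constraint in the direction of $\xi$ is $\Lambda$ itself. By the focus argument, the parabola minus its tangency point lies strictly on the $K_q(\lambda)$-side of $\Lambda$. Choose the sign $\sigma$ for which $\Gamma_\lambda'(s_0)$ points from $\xi^2$ toward $\xi$. For small $\tau>0$ the point $\Gamma_\lambda(s_0+\sigma\tau)$ then lies within $O(\tau^2)$ of the relative interior of that edge, strictly on the inner side of $\Lambda$, and hence in $\Int K_q(\lambda)$. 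Lemma~\ref{lem:persistence} again yields a contradiction.

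The delicate point is making this last step quantitative. The inner offset of order $\tau^2$ must dominate the distance to the other edge constraints. It does, because those constraints are inactive at points of the relative interior of $[\xi,\xi^2]$, and the curve approaches such points at first order in $\tau$.
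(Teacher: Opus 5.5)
Your proof is correct. Its skeleton matches the paper's:
\begin{itemize}
\item (i) is the same tangent-cone-plus-persistence argument.
\item (ii) is the same real-axis-support contradiction. You route it through $\lambda^q\in\R$ and the $\ell_0$ device from Proposition~\ref{prop:maximal-trichotomy}, whereas the paper reads off $(q-1)\theta\le\pi$ directly from support.
\item (iii) is the same edge-aligned perturbation with $h<0$: the slack in the non-face constraints is of order $\tau$, against an $O(\tau^2)$ remainder. Your last paragraph phrases this comparison backwards, but your justification is the right one.
\end{itemize}

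The genuine difference is how you decide which side of $\Lambda$ a point lies on. The paper fixes the normal functional $N(w)=\Im\bigl((w-\xi^2)\overline{(\xi-\xi^2)}\bigr)$ and computes three values, all equal in sign: $N(0)=-|\xi|^2\Im\xi$, $N(1)=-|1-\xi|^2\Im\xi$, and $-|1-\xi|^2\Im\xi$ for the increment produced by the quadratic term $(\xi-1)^2$. You instead note that $\Gamma_\lambda(\R)$ is the image of the line $1+s(\lambda^q-1)$ under squaring. Once $d\notin\R$, this image is a nondegenerate parabola with focus $0$, and $\Lambda$ is its tangent at $\xi^2$. The focal property then places the origin, and every other point of the curve, strictly on one side of $\Lambda$.

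Your version explains conceptually why the origin must lie on the polygon side. It also gives exact strict sidedness of the whole curve, not just of its second-order term. The paper's computation is self-contained and avoids appealing to classical conic geometry. It also produces the explicit normal values that are reused in the lateral case of Theorem~\ref{thm:local-inward}.

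One step you should state explicitly: identifying the parabola's side of $\Lambda$ with the $K_q(\lambda)$ side requires $1\notin\Lambda$. This holds because:
\begin{itemize}
\item squaring is injective on the line when $d\notin\R$, so $1=\Gamma_\lambda(0)\ne\Gamma_\lambda(s_0)=\xi^2$;
\item a nondegenerate parabola meets a tangent line only at its point of tangency.
\end{itemize}
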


\begin{proof}
For $h$ near zero,
\begin{equation}\label{eq:transfer-expansion}
 \Gamma_\lambda(s_0+h)
 =\xi^2+\frac{2h}{s_0}(\xi^2-\xi)
 +\frac{h^2}{s_0^2}(\xi-1)^2.
\end{equation}
If $\Lambda$ does not support the polygon, Lemma~\ref{lem:tangent-cone} and the $O(h^2)$ remainder give a strict interior transfer intersection for one sign of $h$, contradicting Lemma~\ref{lem:persistence}. Thus $\Lambda$ supports.

Suppose $\Im\xi=0$. Then $\Lambda$ is the real axis and $\xi^2\ge0$. After conjugating if necessary, support by the real axis places all powers in the closed upper half-plane. Writing $\lambda=\rho\e^{i\theta}$ with $0<\theta<\pi$ gives $(q-1)\theta\le\pi$. If $j\ge1$, then $\lambda^j$ cannot be nonnegative real. If $j=0$, then $\xi=\pm1$: the sign $+$ forces $\lambda^q=1$, while the sign $-$ forces $s_0=s^*(\lambda)$. Both are excluded. Hence $\Im\xi\ne0$.

Suppose now that $\xi=\lambda^k$ is a listed power. Since $0<|\lambda|<1$ and $\xi^2=\lambda^j$, one has $j=2k$. The segment $[\xi^2,\xi]$ lies in the exposed face on $\Lambda$. Orient the line by
\[
 e=\xi-\xi^2
\]
and use the normal functional
\[
 N(w)=\Im\bigl((w-\xi^2)\overline e\bigr).
\]
Direct calculation gives
\begin{equation}\label{eq:edge-aligned-normal}
 N(1)=-|1-\xi|^2\Im\xi,
 \qquad
 N\bigl(\xi^2+(\xi-1)^2\bigr)-N(\xi^2)
 =-|1-\xi|^2\Im\xi.
\end{equation}
Thus the quadratic vector $(\xi-1)^2$ points to the same side of $\Lambda$ as the polygon vertex $1$. For $h<0$, the first-order point
\[
 \xi^2+\frac{2h}{s_0}(\xi^2-\xi)
\]
lies in the relative interior of the exposed segment, at distance of order $|h|$ from its endpoint $\xi^2$. Every non-face supporting inequality is therefore strict by an amount of order $|h|$, whereas the quadratic perturbation is only of order $h^2$. The face inequality is made strict toward the polygon side by \eqref{eq:edge-aligned-normal}. Hence the full point in \eqref{eq:transfer-expansion} lies in $\Int K_q(\lambda)$ for all sufficiently small negative $h$, contradicting Lemma~\ref{lem:persistence}. Thus the contact is not edge-aligned.

Finally, with the same orientation,
\[
 N(0)=-|\xi|^2\Im\xi,
 \qquad
 N(1)=-|1-\xi|^2\Im\xi.
\]
The origin and the vertex $1$ therefore lie strictly on the same side of $\Lambda$. Since the polygon side contains $1$, it also contains $0$.
\end{proof}

\begin{figure}[htbp]
\centering
\includegraphics[width=0.78\textwidth]{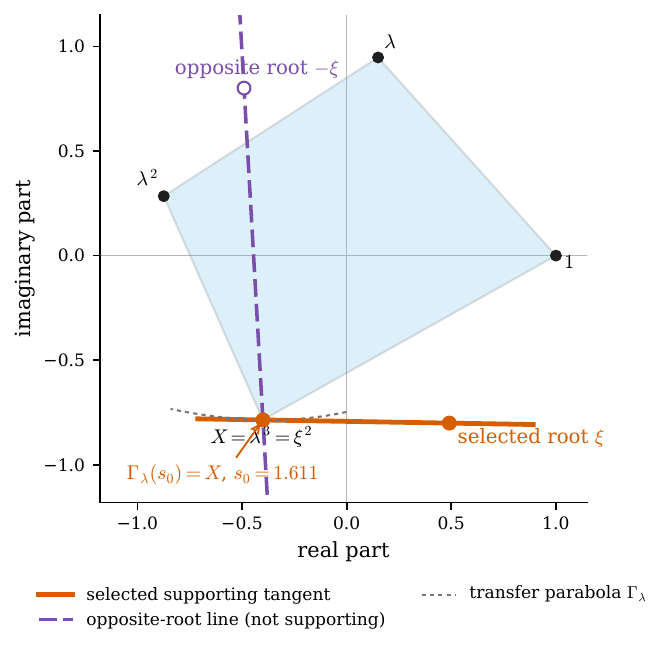}
\caption{Lateral carrier and square-root selection for $q=4$ in the Farey cell $1/5<x<1/4$, on the ray $x=9/40$. The one-site contact satisfies $\Gamma_\lambda(s_0)=\lambda^3=\xi^2$. The line through $\xi$ and $\xi^2$ supports the power polygon, whereas the line determined by the opposite root $-\xi$ cuts through it. The supporting orientation therefore selects $\xi$ and excludes the opposite algebraic branch.}
\label{fig:lateral-branch-selection}
\end{figure}
\FloatBarrier

The distinction in Figure~\ref{fig:lateral-branch-selection} is geometric rather than cosmetic. Squaring the lateral equation erases the orientation of the supporting tangent; restoring that orientation selects $\xi$ and excludes the generally extraneous branch $-\xi$.

We now pass to the reciprocal half-spiral geometry. Put
\begin{equation}\label{eq:L-from-j}
 L=2q-j
\end{equation}
and let $z=\overline\lambda^{-1}=R\e^{2\pi ix}$. Define
\begin{equation}\label{eq:v-def}
 v=s_0-(s_0-1)z^q.
\end{equation}
Then
\begin{equation}\label{eq:v-square-combo}
 v^2=z^L,
 \qquad
 1=\frac{s_0-1}{s_0}z^q+\frac1{s_0}v.
\end{equation}
Thus $1\in(z^q,v)$. The conjugate-linear similarity
\begin{equation}\label{eq:T-map}
 T(w)=\frac{z^{2q}\overline w}{v}
\end{equation}
sends
\[
 T(\xi)=z^q,
 \qquad
 T(\xi^2)=v,
 \qquad
 T(\lambda^r)=\frac{z^{2q-r}}v.
\]
Hence it maps $K_q(\lambda)$ to the scaled polygon
\begin{equation}\label{eq:scaled-polygon}
 C_z=\conv\left\{C_m=\frac{z^m}{v}:q+1\le m\le2q\right\},
\end{equation}
and maps $\Lambda$ to the line through $1,z^q,v$. By Proposition~\ref{prop:one-site-bridge}, the scaled polygon and the origin lie in the same closed half-plane.
The identity $T(\xi)=z^q$ also gives the useful bridge
\begin{equation}\label{eq:xi-v-bridge}
 \overline\xi=z^{-q}v.
\end{equation}

Because $1\in(z^q,v)$ and the chord line does not contain the origin, the two endpoint rays lie strictly on opposite sides of the positive real ray. The angle at the origin subtended by the chord is the one containing that ray and is strictly smaller than $\pi$. Moreover, $v^2=z^L$, so for an integer $b$, determined modulo $2$, one has
\[
 v=(-1)^bR^{L/2}\e^{\pi iLx}.
\]
Choosing the sign $\eps$ and the integer representative $a$ to record the two oriented arguments gives
\begin{equation}\label{eq:oriented-endpoints}
 z^q=R^q\e^{i\eps\alpha},
 \qquad
 v=R^{L/2}\e^{-i\eps\beta},
\end{equation}
where
\begin{equation}\label{eq:oriented-angles}
 \alpha=2\pi\eps(qx-a)>0,
 \qquad
 \beta=\pi\eps(b-Lx)>0,
 \qquad
 \alpha+\beta<\pi.
\end{equation}
Define
\begin{equation}\label{eq:PQ}
 P=(q,a),
 \qquad
 Q=(L,b),
 \qquad
 \Delta_\eps=\eps\det(P,Q)>0.
\end{equation}
The contact is edge-aligned exactly when $Q\in2\Z^2$. Indeed, \eqref{eq:xi-v-bridge} gives
\[
 \xi=\lambda^k
 \quad\Longleftrightarrow\quad
 v=z^{q-k}.
\]
If $L$ and $b$ are even, then the branch formula above gives $v=z^{L/2}$, hence $\xi=\lambda^{q-L/2}$. Conversely, if $\xi=\lambda^k$, then $v=z^{q-k}$; comparison in $v^2=z^L$ gives $L=2(q-k)$ from the moduli, and the branch formula forces $b$ to be even. Thus both coordinates of $Q$ are even. Proposition~\ref{prop:one-site-bridge} therefore gives
\begin{equation}\label{eq:Q-not-even}
 Q\notin2\Z^2.
\end{equation}

The following exact lattice trichotomy isolates both exceptional mechanisms.

\begin{theorem}[Half-parallelogram trichotomy]\label{thm:half-parallelogram}
Let
\[
 P=(q,a)=dP_*,
 \qquad
 P_*=(r,p)\ \text{primitive},
 \qquad
 Q=(L,b),
\]
where $q\ge2$, $q<L\le2q$, and $\det(P,Q)>0$. Put
\[
 \delta=\det(P_*,Q)>0
\]
and define
\begin{equation}\label{eq:half-parallelogram}
 H(P,Q)=[0,P]+[Q/2,Q]
 =\{\sigma P+\tau Q:0\le\sigma\le1,\ 1/2\le\tau\le1\}.
\end{equation}
Let
\[
 \mathcal L(P,Q)=\{U=(m,n)\in H(P,Q)\cap\Z^2:q<m\le2q\}.
\]
Exactly one of the following alternatives holds:
\begin{enumerate}[label=\textup{(\roman*)}]
\item $\mathcal L(P,Q)$ contains a point
\[
 U\notin\{Q,P+Q/2\};
\]
\item $\delta=1$ and $r+L>2q$, in which case
\[
 \mathcal L(P,Q)=\{Q\};
\]
\item $d=1$, $\delta=2$, and $Q\in2\Z^2$, in which case
\[
 \mathcal L(P,Q)=\{Q,P+Q/2\}.
\]
\end{enumerate}
For negative oriented determinant, the same trichotomy holds after reflecting the second coordinate.
\end{theorem}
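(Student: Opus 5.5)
The plan is to parametrize lattice points of $H(P,Q)$ slice by slice, in a unimodular basis adapted to $P_*$, and then decide membership in the strip $q<m\le 2q$ by an arithmetic-progression argument.

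\textbf{Coordinates.} Since $P_*$ is primitive, I complete it to a basis $(P_*,W)$ of $\Z^2$ with $\det(P_*,W)=1$. Then $Q=cP_*+\delta W$ for some integer $c$. A point $U=\sigma P+\tau Q$ is integral exactly when
\[
 \tau\delta\in\Z \quad\text{and}\quad \sigma d+\tau c\in\Z .
\]
Hence the lattice points of $H(P,Q)$ lie on the slices $\tau=\mu/\delta$ with $\delta/2\le\mu\le\delta$. On a fixed slice they form an arithmetic progression with step $P_*$. In the first coordinate this progression has step $r$, and there are $d$ or $d+1$ terms, since $\sigma d$ ranges over an interval of length $d$. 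The first coordinate of $U$ is $\sigma q+\tau L$.

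\textbf{Corner points.} I record once where the vertices of $H(P,Q)$ and the two special points fall relative to the strip.
\begin{itemize}
\item $Q$ has first coordinate $L\in(q,2q]$, so $Q\in\mathcal L(P,Q)$ always.
\item $P+Q/2$ has first coordinate $q+L/2\in(3q/2,2q]$, so it belongs to $\mathcal L(P,Q)$ whenever it is integral.
\item $Q/2$ has first coordinate at most $q$ and $P+Q$ has first coordinate above $2q$; both are excluded.
\end{itemize}

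\textbf{Case $\delta=1$.} Only the slice $\tau=1$ carries lattice points, and they are $Q+jP_*$ for $0\le j\le d$, with first coordinates $L+jr$. The point $P+Q/2$ is not integral, because $\tau=1/2$ would require $\delta$ to be even. The point $j=1$ lies in the strip exactly when $L+r\le2q$, and it then realizes alternative (i). Otherwise every $j\ge1$ overshoots $2q$, and $\mathcal L(P,Q)=\{Q\}$, which is alternative (ii).

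\textbf{Case $\delta\ge2$.} I work on the middle slice, namely $\tau=1/2$ if $\delta$ is even and $\tau=(\delta+1)/(2\delta)$ if $\delta$ is odd.
\begin{itemize}
\item On this slice the first coordinates of the lattice points form a progression with step $r$. It starts at a value at most $\tau L+r$ and runs through a $\sigma$-range of length essentially $1$, that is, a first-coordinate range of length about $q=dr$.
\item Its initial value lies at or slightly above $L/2\le q$, and its span reaches past $q$. The window $(q,2q]$ has length $q\ge r$, so at least one term falls in the window.
\item The remaining task is to make sure the term found is neither $Q$ nor $P+Q/2$.
  \begin{itemize}
  \item If $\tau\ne1/2$, or if $Q\notin2\Z^2$, the term found cannot be $P+Q/2$. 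When $Q\notin 2\Z^2$, the $\tau=1/2$ slice is shifted by a half-step, giving points such as $(P+Q)/2$ with first coordinate $(q+L)/2\in(q,3q/2]$.
  \item If $Q\in2\Z^2$ but $d\ge2$, the progression on the $\tau=1/2$ slice has step $r\le q/2$. It therefore contains a term strictly between $Q/2$ and $P+Q/2$ lying in the window, for instance $Q/2+(d-1)P_*$ or a neighbouring term.
  \item If $Q\in2\Z^2$, $d=1$ and $\delta\ge3$, some other slice with $1/2<\tau<1$ exists and the same argument applies there.
  \end{itemize}
\item What remains is exactly $d=1$, $\delta=2$, $Q\in2\Z^2$. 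Then the slices are $\tau=1/2$, carrying $Q/2$ and $P+Q/2$, and $\tau=1$, carrying $Q$ and $Q+P$. Only $Q$ and $P+Q/2$ survive the strip condition, which is alternative (iii).
\end{itemize}

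\textbf{Exclusivity and reflection.} Alternatives (ii) and (iii) list all of $\mathcal L(P,Q)$ and contain no third point, so each excludes (i). They exclude each other because they require different values of $\delta$. The negative-determinant statement follows by applying the reflection $(m,n)\mapsto(m,-n)$, which preserves the lattice, the strip, and the shape of $H(P,Q)$.

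\textbf{Main obstacle.} The delicate step is the window-hitting claim on the middle slice. I must verify that the progression's endpoint values straddle $q$ without all terms overshooting $2q$, using only $q<L\le2q$ and $r=q/d$. The boundary subcase $d=1$, where the step is as large as $q$, has to be handled with the sharp inequalities $L/2\le q<L$. I would first try to show directly that the term of smallest first coordinate exceeding $q$ is at most $q+r\le2q$, and then treat $d=1$ by explicit enumeration of the slice points.
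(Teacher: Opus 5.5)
Your slice decomposition is the paper's fibre decomposition in other notation: your slice $\tau=y/\delta$ is its fibre $y$, and your progressions are its translates $U_y+kP_*$. Your treatment of $\delta=1$, of the edge-aligned case, and of every case with $d\ge2$ via the slice $y=\lceil\delta/2\rceil$ matches the paper. One correction there: when the first term $m_0$ of that slice already exceeds $q$, the bound you need is $m_0<\tau L+r\le q+q/\delta+q/d\le 2q$, not ``at most $q+r$'', which can fail for odd $\delta$. Your observation for $d=1$, $\delta$ even and $Q\notin2\Z^2$ is correct and shorter than the paper's route for that subcase. There $(P+Q)/2$ is integral, because two nonzero vectors modulo $2$ with even determinant coincide, and its first coordinate lies in $(q,3q/2]$.

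The gap is the rest of the case $d=1$, namely $\delta$ odd with $\delta\ge3$, or $Q\in2\Z^2$ with $\delta\ge4$. Here the step is $r=q$. A slice whose least $\sigma$ is $\nu_y/\delta$ with $\nu_y\ne0$ carries exactly one lattice point, with first coordinate $m_y=(yL+\nu_yq)/\delta$. So your claim that the middle slice ``reaches past $q$'' is false.

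Concretely, take $q=11$, $P=(11,3)$, $Q=(13,4)$, so that $d=1$ and $\delta=5$. The slice $\tau=3/5$ contains only $(P+3Q)/5=(10,3)$, whose first coordinate is $10\le q$. Alternative (i) is witnessed only by $(3P+4Q)/5=(17,5)$ on the slice $\tau=4/5$. For the same reason, ``the same argument applies'' on another slice when $Q\in2\Z^2$ is unsupported. ``Explicit enumeration'' is not a proof, since $\delta$ is unbounded.

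What is missing is an adaptive choice of slice, together with an argument that the candidate slices cannot all fail. The paper sets $T=q\delta/L$, so that $yL>q\delta$ exactly when $y>T$. For the fibres $y_\pm$ adjacent to $T$ this gives $m_{y_+}>q$ and $m_{y_-}<2q$ automatically. After the boundary situations $y_+=\delta$ and $y_-<\lceil\delta/2\rceil$ are handled, it excludes $m_{y_-}\le q$ and $m_{y_+}>2q$ holding simultaneously, as follows.
\begin{itemize}
\item Put $H=U_{y_+}-U_{y_-}$, with first coordinate $h>q$. Then $\det(P,H)=1$ and $Q=-\kappa P+\delta H$ with $\kappa=\nu_{y_+}-\nu_{y_-}$.
\item Hence $L=\delta h-\kappa q\le2q$, which forces $\kappa=\delta-1$ and $\nu_{y_-}=0$.
\item Then $\delta\mid y_-$, which is a contradiction.
\end{itemize}
When $T\in\Z$ and the translate $U_T+P$ is the corner $P+Q/2$, one gets $L=2q$ with $Q$ even, and a parity argument on the fibre $\delta/2+1$ finishes. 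An argument of this kind is needed to close your proof.
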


\begin{figure}[htbp]
\centering
\includegraphics[width=0.76\textwidth]{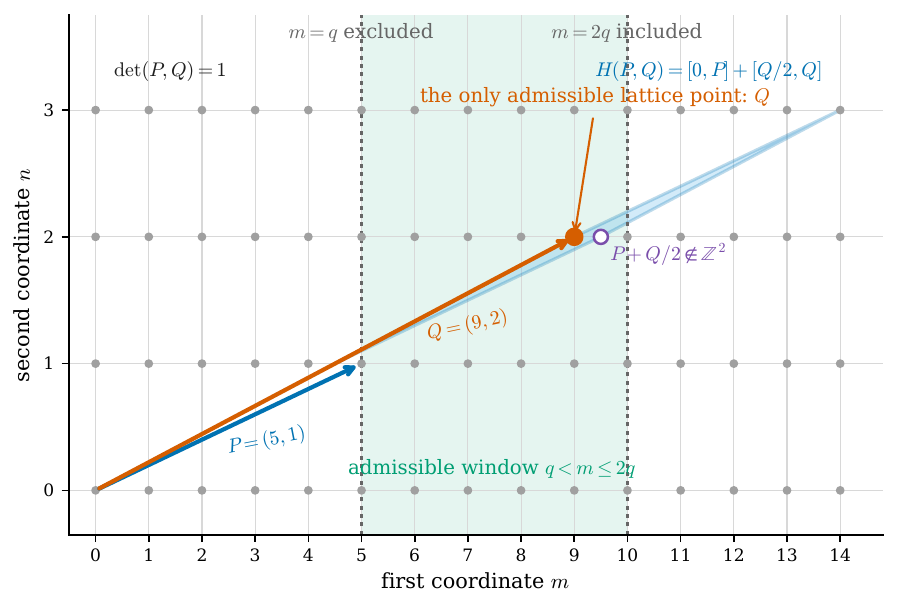}
\caption{The half-parallelogram for the lateral Farey pair $1/5<2/9$. Here $P=(5,1)$, $Q=(9,2)$, and $\det(P,Q)=1$. Within the admissible strip $5<m\le10$, the only lattice point of $H(P,Q)$ is $Q$. This is the determinant-one alternative in Theorem~\ref{thm:half-parallelogram}; an additional admissible lattice point would contradict the proposed supporting geometry.}
\label{fig:half-parallelogram-farey}
\end{figure}
\FloatBarrier

Figure~\ref{fig:half-parallelogram-farey} shows the exceptional configuration that survives the obstruction argument. Its determinant-one geometry is exactly what becomes Farey adjacency in the next theorem.

Under the non-edge-alignment condition \eqref{eq:Q-not-even}, alternative \textup{(iii)} is impossible, so the application reduces to an exact obstruction-versus-Farey dichotomy.

\begin{theorem}[Selection of lateral edges]\label{thm:lateral-selection}
Let
\[
 \lambda\in\partial\Sigma_q\cap\D^\circ,
 \qquad
 \Im\lambda>0,
 \qquad
 \frac\pi q<\Arg\lambda<\pi,
\]
and suppose its maximal contact is one-site as in \eqref{eq:one-site-contact}. Let $f<g$ be the Farey cell containing $x=\Arg\lambda/(2\pi)$. Then exactly one of $L_q(f),L_q(g)$ equals $q$. If $L>q$ is the other lift, then
\[
 j=2q-L,
\]
and the contact lies on the branch-specific lateral-edge carrier \eqref{eq:lateral-branch} associated with $E_{f,g}$.
\end{theorem}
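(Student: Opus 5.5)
We work in the reciprocal half-spiral picture already set up before the statement. Write $z=\overline\lambda^{-1}=R\e^{2\pi ix}$, $L=2q-j$, and let $v$ be as in \eqref{eq:v-def}, so that $v^2=z^L$ and $1\in(z^q,v)$. Record the oriented endpoints \eqref{eq:oriented-endpoints} through the lattice vectors $P=(q,a)$ and $Q=(L,b)$ of \eqref{eq:PQ}, with $\Delta_\eps>0$. Proposition~\ref{prop:one-site-bridge}(i),(iv) applies through the similarity $T$. It shows that the line $\aff\{1,z^q,v\}$ supports the scaled polygon $C_z=\conv\{z^m/v:q<m\le2q\}$ and that $C_z$ lies on the origin side. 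Multiplying by $v$ (a similarity), the same line geometry says: the points $z^m$, $q<m\le2q$, lie on the origin side of the line $v\cdot\aff\{1,z^q,v\}=\aff\{v,z^qv,v^2\}$.

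\textbf{Step 1: convert the half-plane condition into a lattice statement.} The plan is to show that if $U=(m,n)\in\mathcal L(P,Q)$ is a lattice point of the half-parallelogram $H(P,Q)$ other than $Q$ and $P+Q/2$, then the corresponding power $z^m$ lies strictly beyond the supporting line. That would contradict support. Take logarithmic polar coordinates, $\log|w|$ against the oriented argument. The points $v$, $z^qv$ and $v^2$ correspond to $Q/2$, $P+Q/2$ and $Q$, scaled by $(\log R,\pi)$. A lattice point $U$ corresponds to $z^m$ on the branch whose argument is $\pi n$ up to orientation.

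The line through $v$ and $z^qv$ is the image of the chord through $1$ and $z^q$. Lemma~\ref{lem:log-support}, applied to the logarithmic radial function $\psi$ of this line, gives the needed inequality: $\psi$ lies strictly below the linear interpolation on the parallelogram of barycentric combinations, except at the three vertices. So any $U=\sigma P+\tau Q$ with $0\le\sigma\le1$ and $1/2\le\tau\le1$ lies outside the supporting line unless $U$ is one of the vertices. Writing $U=(\tau-\tfrac12)Q+\sigma P+\tfrac12Q$ and taking $(r,t)=(\sigma,\ldots)$ in \eqref{eq:log-support} handles this. The vertex $Q/2$ itself has $m\le q$ or is not integral.

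I expect this translation to be the main obstacle. The difficulty is matching the integer $n$ of $U$ with the correct sheet of the argument, and checking that $z^m$, and not $-z^m$, is the point tested. The parity bookkeeping, with $b$ determined only modulo $2$, must be done carefully with the selected sign $(-1)^b$.

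\textbf{Step 2: apply the trichotomy.} By Step 1, alternative (i) of Theorem~\ref{thm:half-parallelogram} is impossible. Alternative (iii) needs $Q\in2\Z^2$, which is excluded by \eqref{eq:Q-not-even}. Hence alternative (ii) holds: $\delta=\det(P_*,Q)=1$ and $r+L>2q$, where $P=dP_*$ and $P_*=(r,p)$.

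\textbf{Step 3: turn this into Farey adjacency.} Set $h/k=p/r$, reduced, so that $k=r$ divides $q$ and $m=qh/k=a$. Set $\ell/L'=b/L$; this fraction is reduced because $\det(P_*,Q)=1$ forces $\gcd(L,b)=1$. The relation $|rb-pL|=1$ together with $r+L>2q$ says that $p/r$ and $b/L$ are consecutive in $\F_{2q}$, since for fractions of unit determinant the denominator sum exceeding $2q$ is the adjacency criterion.

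Next, the angles $\alpha,\beta>0$ of \eqref{eq:oriented-angles} place $x$ strictly between $a/q=p/r$ and $b/L$. So this Farey pair is the cell $f<g$ containing $x$. For location inside $[1/(2q),1/2]$, use $\pi/q<2\pi x<\pi$.

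Finally, $L_q(p/r)=r\lceil q/r\rceil=q$. For the other endpoint, $L_q(b/L)=L$ because $q<L\le2q$. The case $L=2q$ is ruled out by the argument stated in the lateral-cell paragraph. Thus exactly one lift equals $q$, and $j=2q-L$.

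\textbf{Step 4: branch identification.} Compare the parametrization with \eqref{eq:lateral-angles}: $\eps$, $\alpha$ and $\beta$ agree. Then $\overline\xi=z^{-q}v$ from \eqref{eq:xi-v-bridge}, with $v=(-1)^bR^{L/2}\e^{\pi iLx}$, gives $\xi=(-1)^{\ell}\rho^{J/2}\e^{\pi iJx}=\xi_{\ell,L}(\lambda)$. Here the parity of $b$ is fixed as that of $\ell$ because $b=\ell$. With $c=1-1/s_0$, the relation $\xi=1+s_0(\lambda^q-1)$ is exactly \eqref{eq:lateral-branch}.
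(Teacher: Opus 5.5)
Your proposal is correct and follows the paper's proof step for step: the lattice obstruction via Lemma~\ref{lem:log-support} with $(r,t)=(\sigma,2\tau-1)$, the exclusion of alternative (iii) by \eqref{eq:Q-not-even}, Farey adjacency from $\delta=1$ and $r+L>2q$, and the branch identification through $\overline\xi=z^{-q}v$. Multiplying by $v$ instead of working with $C_m=z^m/v$ is only cosmetic, and the sheet-matching issue you flag is routine: $z^m$ is unambiguous and $n$ only selects a lift of $2\pi mx$, which the identity $2U-Q=\sigma(2P)+(2\tau-1)Q$ places at $\arg v+\eps\bigl(\sigma\alpha-(2\tau-1)\beta\bigr)$ inside the chord's angular window, while the only sign ambiguity lies in $v$ and is already fixed by $b$.
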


\begin{proof}
Use the scaled geometry and oriented data above. Apply the oriented form of the half-parallelogram trichotomy in Theorem~\ref{thm:half-parallelogram}. Suppose first that its obstructing-lattice-point alternative holds. Then there is $U=(m,n)\in\Z^2$ with $q<m\le2q$ such that
\[
 U\in[0,P]+[Q/2,Q],
 \qquad
 U\ne Q,
 \qquad
 U\ne P+Q/2.
\]
Since $U\in[0,P]+[Q/2,Q]$, write
\[
 U=\sigma P+\tau Q,
 \qquad
 0\le\sigma\le1,
 \qquad
 \frac12\le\tau\le1.
\]
Put
\begin{equation}\label{eq:sigma-tau}
 \sigma_0=\frac{\eps\det(U,Q)}{\Delta_\eps},
 \qquad
 \tau_0=\frac{2\eps\det(P,U)-\Delta_\eps}{\Delta_\eps}.
\end{equation}
The representation of $U$ gives
\[
 \eps\det(U,Q)=\sigma\Delta_\eps,
 \qquad
 2\eps\det(P,U)-\Delta_\eps=(2\tau-1)\Delta_\eps.
\]
Hence
\[
 \sigma_0=\sigma,
 \qquad
 \tau_0=2\tau-1,
\]
so $0\le\sigma_0,\tau_0\le1$. Moreover,
\begin{equation}\label{eq:lattice-barycentric}
 2U-Q=\sigma_0(2P)+\tau_0Q.
\end{equation}

The first coordinate of \eqref{eq:lattice-barycentric} is
\[
 2m-L=2\sigma_0q+\tau_0L.
\]
Thus, for the scaled vertex $C_m=z^m/v$,
\[
 \log|C_m|
 =\left(m-\frac L2\right)\log R
 =\sigma_0q\log R+\tau_0\frac L2\log R.
\]
The second coordinate is
\[
 2n-b=2\sigma_0a+\tau_0b.
\]
Since \eqref{eq:oriented-endpoints} gives $\arg v\equiv\pi(Lx-b)\pmod{2\pi}$, we obtain
\[
\begin{aligned}
 \arg C_m
 &\equiv 2\pi mx-\pi(Lx-b)\\
 &=\pi\bigl((2m-L)x+b\bigr)\\
 &\equiv 2\pi\sigma_0(qx-a)-\pi\tau_0(b-Lx)\\
 &=\eps(\sigma_0\alpha-\tau_0\beta)
 \pmod{2\pi}.
\end{aligned}
\]
Indeed, the difference between the second and third expressions is $2\pi n$. Choosing the representative on the argument interval between the two chord endpoints therefore gives
\[
 \arg C_m=\eps(\sigma_0\alpha-\tau_0\beta).
\]
Let $\psi$ be the logarithmic radial function of the supporting line through $1,z^q,v$. Lemma~\ref{lem:log-support} yields
\[
 \psi\bigl(\eps(\sigma_0\alpha-\tau_0\beta)\bigr)
 \le\sigma_0\psi(\eps\alpha)+\tau_0\psi(-\eps\beta)
 =\log|C_m|.
\]
Equality can occur only for $(\sigma_0,\tau_0)=(0,0),(1,0),(0,1)$. These cases give, respectively, $U=Q/2$ with first coordinate $L/2\le q$, the excluded corner $U=P+Q/2$, and the excluded endpoint $U=Q$. The inequality is therefore strict. Thus $C_m$ lies beyond the supporting line on the side opposite the origin, contradicting support of the scaled polygon.

The edge-aligned alternative in Theorem~\ref{thm:half-parallelogram} is excluded by \eqref{eq:Q-not-even}. Therefore the Farey alternative holds. Write
\[
 P=dP_*,
 \qquad
 P_*=(r,p)\ \text{primitive}.
\]
Then
\begin{equation}\label{eq:farey-from-lattice}
 \eps\det(P_*,Q)=1,
 \qquad
 r+L>2q.
\end{equation}
Since $P=dP_*$, one has $a/q=p/r$. The two inequalities in \eqref{eq:oriented-angles} therefore read
\[
 \eps\left(x-\frac pr\right)>0,
 \qquad
 \eps\left(\frac bL-x\right)>0.
\]
Thus the reduced fractions $p/r$ and $b/L$ lie on opposite sides of $x$, with their order determined by $\eps$. Their determinant is $\pm1$, so every reduced fraction strictly between them has denominator at least $r+L>2q$. Both fractions lie in $[1/(2q),1/2]$: otherwise $1/(2q)$ or $1/2$ would be an intervening fraction of denominator at most $2q$. Hence they are the consecutive Farey endpoints $f,g$ of the cell containing $x$.

Because $r\mid q$, the lift of $p/r$ is $q$. The determinant condition makes $Q$ primitive, and $q<L\le2q$, so the lift of $b/L$ is $L$. In fact $L<2q$. If $L=2q$, then $r\mid q$ and $|rb-2qp|=1$ imply $r\mid1$, hence $r=1$, incompatible with $p/r\in[1/(2q),1/2]$. Thus the cell has exactly one apex label and one base label $J=2q-L$, which equals the original support index $j$.

Finally, from \eqref{eq:v-def} and the definition of $T$,
\[
 \overline\xi=z^{-q}v.
\]
Using the branch integer $b$ in \eqref{eq:oriented-angles},
\[
 \xi=(-1)^b|\lambda|^{J/2}\e^{\pi iJx}=\xi_{b,L}(\lambda).
\]
Since $c=1-1/s_0$, the identity $\xi=1+s_0(\lambda^q-1)$ is equivalent to
\[
 \lambda^q-c=(1-c)\xi_{b,L}(\lambda),
\]
which is the selected lateral-edge branch.
\end{proof}

\subsection{Open-cell boundary exhaustion}

\begin{theorem}[Open-cell boundary exhaustion]\label{thm:boundary-exhaustion}
Let $q\ge3$ and
\[
 \lambda\in\partial\Sigma_q\cap\D^\circ,
 \qquad
 \Im\lambda>0,
 \qquad
 \frac\pi q<\Arg\lambda<\pi.
\]
If $x=\Arg\lambda/(2\pi)$ lies in the open Farey cell $(f,g)$, then $\lambda$ lies on the selected edge carrier associated with $E_{f,g}$:
\begin{enumerate}[label=\textup{(\roman*)}]
\item if $L_q(f),L_q(g)>q$, it lies on the base-edge equation \eqref{eq:base-edge} with support $J_q(f),J_q(g)$;
\item if exactly one lift equals $q$ and the other is $L>q$, it lies on the branch-specific lateral-edge equation \eqref{eq:lateral-branch} with support $2q-L$.
\end{enumerate}
No other support or square-root branch can furnish the maximal contact defining a visible boundary carrier in the open cell.
\end{theorem}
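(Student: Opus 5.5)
The plan is to assemble the theorem from the maximal-contact trichotomy and the two selection theorems already proved. Fix $\lambda$ as in the statement. Since $\pi/q<\Arg\lambda<\pi$ and $\lambda\in\D^\circ$, we have $\lambda^q\ne1$; indeed $\lambda^q=1$ with $|\lambda|<1$ forces $\lambda=0$, which is excluded by $\Im\lambda>0$. Moreover $\lambda$ is spectral because $\Sigma_q$ is compact (Proposition~\ref{prop:elementary}), so the maximal transfer parameter $s_{\max}$ in \eqref{eq:max-transfer} is defined. We then apply Proposition~\ref{prop:maximal-trichotomy} to a maximal witness. Alternative (iii) requires $\Arg\lambda=\pm\pi/q$, which the hypothesis excludes. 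So either $s_{\max}=1$ with a two-vertex contact on an exposed face, or $1<s_{\max}<s^*(\lambda)$ with a one-site contact $\Gamma_\lambda(s_{\max})=\lambda^j$, which is exactly \eqref{eq:one-site-contact}.

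In the first case we invoke Theorem~\ref{thm:base-selection}. It gives $L_q(f),L_q(g)>q$, and the support equals $\{J_q(f),J_q(g)\}$. Since $\Gamma_\lambda(1)=\lambda^{2q}$, the contact is $\lambda^{2q}=(1-t)\lambda^{J_q(f)}+t\lambda^{J_q(g)}$, which is \eqref{eq:base-edge} on $E_{f,g}$. In the second case we invoke Theorem~\ref{thm:lateral-selection}. It shows that exactly one lift equals $q$ and that $j=2q-L$. It also places the contact on the branch-specific equation \eqref{eq:lateral-branch}, with $\xi=\xi_{b,L}(\lambda)$ and $c=1-1/s_{\max}$.

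Next we must check that the two cases match the two cell types stated in the theorem. If both lifts exceed $q$, the one-site case cannot occur, since Theorem~\ref{thm:lateral-selection} would force one lift to equal $q$. Hence the contact is of base type, which is (i). If exactly one lift equals $q$, the base case cannot occur, since Theorem~\ref{thm:base-selection} would force both lifts above $q$. Hence the contact is lateral, which is (ii). Both lifts cannot equal $q$, because consecutive endpoints never both carry the label $*$, as noted after \eqref{eq:vertex-map}. So the two alternatives exhaust all cells.

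The main delicate point is the final uniqueness clause, specifically for the lateral branch. I would argue it as follows. Each selection theorem pins down the support set. In the lateral case, Theorem~\ref{thm:lateral-selection} identifies the branch integer $b$ with the Farey numerator $\ell$ of the endpoint $\ell/L$, because the Farey alternative of Theorem~\ref{thm:half-parallelogram} forces $b/L$ to be that endpoint. As a result the root $\xi$ is the sector-selected $\xi_{\ell,L}(\lambda)$ of \eqref{eq:selected-root} and not $-\xi$. One also has to reconcile the convention for $\eps$ in \eqref{eq:oriented-angles} with the $\eps$ in \eqref{eq:lateral-angles}; both record which side of the apex endpoint $x$ lies on. Finally, since any boundary point admits a maximal witness, no other support or branch can furnish a maximal contact, because the maximal one is forced onto the selected carrier.
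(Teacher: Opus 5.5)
Your proposal is correct and follows the paper's own argument. The maximal-contact trichotomy, with alternative (iii) excluded by the strict angular hypothesis, leaves the $s=1$ case and the one-site case; Theorems~\ref{thm:base-selection} and~\ref{thm:lateral-selection} then place these on the carrier of $E_{f,g}$, and the two cases are disjoint because consecutive Farey endpoints never both carry the label $*$.

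Your additional checks make explicit what the paper leaves implicit. These are that $\lambda\in\Sigma_q$ by closedness, that $\lambda^q\ne1$, and that the branch integer $b$ equals $\ell$, so that $\xi=\xi_{\ell,L}(\lambda)$ with matching $\eps$.
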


\begin{proof}
By Proposition~\ref{prop:maximal-trichotomy}, a nonreal open-disk boundary point above the first ray can arise only from a base-edge contact at $s=1$ or a one-site vertex contact with $1<s<s^*$. The maximal-transfer endpoint is confined to the first rays, excluded by the strict angular hypothesis. The base-edge alternative is selected by Theorem~\ref{thm:base-selection}; the lateral-edge alternative is selected by Theorem~\ref{thm:lateral-selection}. The two cases are disjoint because consecutive Farey endpoints have either two base labels or exactly one apex label.
\end{proof}

\section{Visibility, radial filling, and endpoint closure}\label{sec:visibility}

\subsection{Unique analytic carrier point on every ray}

\begin{proposition}[Raywise uniqueness and analyticity]\label{prop:raywise}
Let $(f,g)$ be an open Farey cell and $x\in(f,g)$. The selected edge carrier has exactly one point on the ray $\Arg\lambda=2\pi x$. Its radius $\varrho_q(x)$ and its edge parameter are real-analytic functions of $x$ throughout the cell.

In a base-edge cell, the point and parameter are given by \eqref{eq:base-radius}--\eqref{eq:base-parameter}. In a lateral-edge cell, they are given by \eqref{eq:lateral-radius}--\eqref{eq:lateral-parameter}; the corresponding transfer parameter satisfies
\[
 1<s<s^*(\lambda).
\]
\end{proposition}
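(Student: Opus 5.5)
The proof works in the reciprocal coordinate $z=\overline\lambda^{-1}=R\e^{2\pi ix}$ of \eqref{eq:reciprocal} and reduces each edge equation on a fixed ray to the two-ray chord problem of Theorem~\ref{thm:chord}.

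\emph{Base-edge cells.} The base equation \eqref{eq:base-edge} with $j=2q-M$, $k=2q-N$, divided by $\lambda^{2q}$ and conjugated, becomes
\[
 1=(1-t)z^M+tz^N .
\]
So a carrier point on the ray $x$ is the same as a value $R>1$ for which $1$ lies on the chord $[z^M,z^N]$ with coefficient $t\in[0,1]$. The endpoints $t=0,1$ are excluded because they would force $z^M=1$ or $z^N=1$, which is impossible since $Mx,Nx\notin\Z$ in the open cell. From the proof of Theorem~\ref{thm:augmented-edge} we have $z^M=R^M\e^{i\alpha}$ and $z^N=R^N\e^{-i\beta}$, where $\alpha,\beta$ are the angles in \eqref{eq:base-angles}. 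Lemma~\ref{lem:half-angle} gives $\alpha,\beta>0$ and $\alpha+\beta<\pi$. Theorem~\ref{thm:chord} with $u=M$, $v=N$ then gives a unique $R_0>1$. Setting $h(R_0)=1$ in \eqref{eq:chord-radius} and putting $\rho=R_0^{-1}$ yields \eqref{eq:base-radius}, and \eqref{eq:chord-coordinate} becomes \eqref{eq:base-parameter}. Since $\alpha,\beta$ are affine in $x$ and the strict angle conditions hold throughout the cell, the analytic-dependence clause of Theorem~\ref{thm:chord} gives real-analyticity of $\varrho_q$ and $t$.

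\emph{Lateral-edge cells.} Here the branch equation \eqref{eq:lateral-branch} is taken with the fixed selected root $\xi=\xi_{\ell,L}(\lambda)$. Setting $s=(1-c)^{-1}$ gives $\xi=1+s(\lambda^q-1)$. With $v:=z^q\overline\xi$ as in \eqref{eq:xi-v-bridge}, this becomes the convex identity \eqref{eq:v-square-combo}:
\[
 1=\tfrac{s-1}{s}z^q+\tfrac1s v .
\]
Using \eqref{eq:selected-root} one computes $v=(-1)^\ell R^{L/2}\e^{\pi iLx}$. In the orientation $\eps$, and with $a=m=qh/k$, $b=\ell$, this gives
\[
 z^q=R^q\e^{i\eps\alpha},\qquad v=R^{L/2}\e^{-i\eps\beta},
\]
with $\alpha,\beta$ as in \eqref{eq:lateral-angles}.

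The step I expect to need the most care is checking $\alpha,\beta>0$ and $\alpha+\beta<\pi$. Positivity follows from the choice of $\eps$. For the sum, write $D=hL-k\ell$, so that $|D|=1$ by Farey adjacency. A short computation gives
\[
 \alpha+\beta=\pi\eps\Bigl(\tfrac{2q}{k}-\tfrac{L}{k}\Bigr)(kx-h)+\pi\eps\,\tfrac{L}{k}\,\bigl(h-kx\bigr)+\ldots,
\]
and the cleaner route is to note that $\alpha+\beta$ is affine in $x$ and vanishes at neither endpoint. At $x=h/k$ it equals $\pi/k\cdot|D|\cdot(\text{factor})$, and at $x=\ell/L$ it equals $2\pi q|D|/L<\pi$, because $L>q$ gives $2q/L<2$, which I will sharpen using $L\neq 2q$ together with the half-angle argument of Lemma~\ref{lem:half-angle} adapted to $(k,L)$. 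Concretely, I will verify the endpoint values $\pi q/(kL)\cdot(\ldots)$ against $k+L>2q$.

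Once the angle conditions are in hand, Theorem~\ref{thm:chord} with $u=q$, $v=L/2$ gives a unique $R_0$ and an interior coefficient. This produces \eqref{eq:lateral-radius}, and $c=1-1/s$ gives \eqref{eq:lateral-parameter}, with $c\in(0,1)$ because $1/s\in(0,1)$. The selected root depends analytically on $x$ and $\rho$, so analyticity again follows from the implicit-function clause.

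It remains to show $s<s^*(\lambda)$. By the computation in Proposition~\ref{prop:transfer}, this is equivalent to $|\xi|<1$, which holds because $|\xi|=\rho^{J/2}<1$ when $J\ge1$. Finally, uniqueness of the carrier point on the ray follows from the uniqueness of $R_0$ in Theorem~\ref{thm:chord}, since any carrier point on the ray yields a chord incidence of exactly this form.
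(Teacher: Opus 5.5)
Your route is the paper's: in the reciprocal coordinate, both cell types reduce to Theorem~\ref{thm:chord}. The lateral case goes through $v=z^q\overline\xi$ and $1=\frac{s-1}{s}z^q+\frac1s v$, followed by $c=1-1/s$ and $s<s^*(\lambda)\Leftrightarrow|\xi|<1$. Deriving the chord incidence from the branch equation, as you do, is actually the cleaner way to get raywise uniqueness. You should also discard $c=1$ separately: it gives $\lambda^q=1$, which is impossible because $qx\notin\Z$ in an open cell.

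There is a gap at the step you yourself flagged: the bound $\alpha+\beta<\pi$ in a lateral cell is never established, and what you wrote is wrong. At $x=\ell/L$ the value is $2\pi q/(kL)$, not $2\pi q/L$, and ``$2q/L<2$'' only bounds your quantity by $2\pi$. Neither $L\ne2q$ nor a half-angle argument is relevant. Without this bound Theorem~\ref{thm:chord} cannot be invoked, so the lateral radius equation, the interior coefficient $1/s\in(0,1)$, and uniqueness on the ray all remain unproved.

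The repair is the short computation the paper uses. The sum $\alpha+\beta=\pi\eps\bigl(2(qx-m)+(\ell-Lx)\bigr)$ is affine in $x$. Using $|k\ell-hL|=1$, with the sign fixed by $\eps$, it equals $\frac{\pi}{k}\bigl(1+J\,|kx-h|\bigr)$, where $0<|kx-h|<1/L$. Its endpoint values are therefore $\pi/k$ and $\frac{\pi}{k}\bigl(1+\frac JL\bigr)=\frac{2\pi q}{kL}$. Both are below $\pi$, because $k\ge2$ (an endpoint $h/1$ cannot lie in $[1/(2q),1/2]$) and $kL\ge2L>2q$.
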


\begin{proof}
For a base-edge cell, in $z=\overline\lambda^{-1}$, which preserves the ray angle and reverses radial order, Theorem~\ref{thm:augmented-edge} gives the unique reciprocal radius $R>1$ for which $1\in(z^M,z^N)$. Applying Theorem~\ref{thm:chord} with exponents $M,N$ gives the scalar equation and the positive barycentric coefficient. Returning to $\rho=R^{-1}$ yields \eqref{eq:base-radius}--\eqref{eq:base-parameter}.

For a lateral-edge cell, let $h/k$ be the lift-$q$ endpoint and $\ell/L$ the other endpoint. Farey adjacency gives
\[
 k\mid q,
 \qquad q<L<2q,
 \qquad k+L>2q,
 \qquad |\ell k-hL|=1.
\]
The angles in \eqref{eq:lateral-angles} are positive. In the orientation $h/k<x<\ell/L$, the quantity
\[
 2\left(qx-\frac{qh}{k}\right)+(\ell-Lx)
\]
is linear in $x$ and has endpoint values $1/k$ and $2q/(kL)$, both strictly below one. Swapping the two endpoints and replacing $\eps$ by $-\eps$ gives the same bounds in the reversed orientation. Thus $\alpha+\beta<\pi$.

Apply Theorem~\ref{thm:chord} with exponents $q$ and $L/2$ to the chord $[z^q,v]$. There is a unique $R>1$ for which $1\in(z^q,v)$ with positive coefficients. Writing
\[
 1=\frac{s-1}{s}z^q+\frac1s v
\]
gives $s>1$ and $v=s-(s-1)z^q$. The selected square root then satisfies
\[
 \xi=1+s(\lambda^q-1),
\]
so $c=1-1/s$ is real and belongs to $(0,1)$. Since
\[
 |\xi|=|\lambda|^{(2q-L)/2}<1,
\]
and the affine line $1+s(\lambda^q-1)$ meets the unit circle again exactly at $s=s^*(\lambda)$, one has $s<s^*(\lambda)$.

Real-analyticity follows from the implicit-function theorem applied to the strictly increasing chord-radius equation. Equivalently, the derivative of the left side of \eqref{eq:base-radius} or \eqref{eq:lateral-radius} with respect to $\rho$ is strictly positive on $(0,1)$.
\end{proof}

Inside every open Farey cell write
\begin{equation}\label{eq:r0}
 r_0(x)=\varrho_q(x).
\end{equation}

\subsection{Outermostness}

\begin{lemma}[Selected point is outermost]\label{lem:outermost}
Fix $x$ in an open Farey cell. Then
\begin{equation}\label{eq:outermost}
 \max\{r\in[0,1]:r\e^{2\pi ix}\in\Sigma_q\}=r_0(x).
\end{equation}
\end{lemma}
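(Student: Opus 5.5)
The plan is to identify the maximum in \eqref{eq:outermost} as a boundary point of $\Sigma_q$ on the ray. Theorem~\ref{thm:boundary-exhaustion} then places that point on the selected carrier, and Proposition~\ref{prop:raywise} says the carrier meets the ray only once. Throughout, $q\ge3$ as in Section~\ref{sec:geometry}; the case $q=2$ is deferred to Appendix~\ref{app:q2}, as for the preceding selection results.

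\emph{Step 1: the maximum exists and is at least $r_0(x)$.} Put
\[
 S_x=\{r\in[0,1]:r\e^{2\pi ix}\in\Sigma_q\}.
\]
By Proposition~\ref{prop:elementary}, $\Sigma_q$ is compact, so $S_x$ is compact. It is nonempty, because $0\in\Sigma_q$ (for instance $0\in[-1,1]$). Hence $r_1=\max S_x$ exists.

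Next I show that $r_0(x)\in S_x$. By Proposition~\ref{prop:raywise}, the selected carrier point $\lambda_0=r_0(x)\e^{2\pi ix}$ comes with an admissible realizer.
\begin{itemize}
\item In a base-edge cell, the parameter $t\in(0,1)$ of \eqref{eq:base-parameter} gives $B_{2q}(0,0,(1-t)e_j+te_k)$, whose characteristic equation \eqref{eq:base-edge} holds at $\lambda_0$.
\item In a lateral-edge cell, the parameter $c\in(0,1)$ of \eqref{eq:lateral-parameter} satisfies the branch equation \eqref{eq:lateral-branch}. Squaring it gives \eqref{eq:lateral-squared}, so $\lambda_0\in\spec B_{2q}(c,c,e_j)$.
\end{itemize}
Thus $\lambda_0\in\Sigma_q$, and so $r_1\ge r_0(x)>0$.

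\emph{Step 2: $\lambda_1=r_1\e^{2\pi ix}$ satisfies the hypotheses of Theorem~\ref{thm:boundary-exhaustion}.}
\begin{itemize}
\item Since $x$ lies in an open cell of $\F_q^+$, we have $1/(2q)<x<1/2$. Hence $\pi/q<\Arg\lambda_1<\pi$ and $\Im\lambda_1>0$.
\item $\lambda_1$ is not on $\T$. Otherwise, by Proposition~\ref{prop:unit-circle}, $\e^{2\pi ix}$ would be a root of unity of order $m\le2q$. Then $x$ would be a fraction with denominator at most $2q$ lying in $(1/(2q),1/2)$, hence an element of $\F_q^+$. This contradicts the choice of $x$ strictly inside a cell, so $r_1<1$ and $\lambda_1\in\D^\circ$.
\item $\lambda_1\in\partial\Sigma_q$. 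The points $r\e^{2\pi ix}$ with $r_1<r\le1$ lie outside $\Sigma_q$ by maximality, and points with $r>1$ lie outside $\Sigma_q\subset\D$. So $\lambda_1$ is a limit of non-spectral points. Since $\Sigma_q$ is closed, $\lambda_1$ is a spectral boundary point.
\end{itemize}

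\emph{Step 3: conclude.} Theorem~\ref{thm:boundary-exhaustion} now applies to $\lambda_1$. It shows that $\lambda_1$ lies on the selected carrier of $E_{f,g}$: on \eqref{eq:base-edge} with the Farey support in a base cell, or on the branch-specific equation \eqref{eq:lateral-branch} in a lateral cell. By the raywise uniqueness in Proposition~\ref{prop:raywise}, the selected carrier has exactly one point on this ray, namely $\lambda_0$. Hence $r_1=r_0(x)$, which is \eqref{eq:outermost}.

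I expect the main obstacle to be in Step 3, in matching what Theorem~\ref{thm:boundary-exhaustion} delivers with what Proposition~\ref{prop:raywise} counts. The uniqueness must be applied to the \emph{branch-specific} equation \eqref{eq:lateral-branch}, not to the squared equation \eqref{eq:lateral-squared}. The opposite branch $-\xi$ may well have its own point on the same ray, so uniqueness would fail for the squared equation. To close this gap I will state explicitly that the conclusion of Theorem~\ref{thm:lateral-selection} is $\xi=\xi_{b,L}(\lambda_1)$, with $b/L$ the Farey endpoint $\ell/L$. This is exactly the selected root \eqref{eq:selected-root} whose oriented angles reduce \eqref{eq:lateral-radius} to a single strictly monotone scalar equation. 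Its unique root in $(0,1)$ is then $\rho=r_0(x)$.
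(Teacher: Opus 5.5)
Your proposal is correct and follows essentially the same route as the paper's proof. Compactness gives a maximal radius on the ray, Proposition~\ref{prop:unit-circle} forces that radius below one, and maximality makes the point a nonreal open-disk boundary point; Theorem~\ref{thm:boundary-exhaustion} and the raywise uniqueness of Proposition~\ref{prop:raywise} then identify it with $r_0(x)$. Your extra care in applying uniqueness to the branch-specific equation \eqref{eq:lateral-branch}, rather than to the squared equation, matches how those two results are stated.
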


\begin{proof}
The selected point is spectral because it lies on an explicit edge root locus. Compactness of $\Sigma_q$ gives a maximal spectral radius $r_{\max}$ on the ray. The unit point $\e^{2\pi ix}$ is not spectral, because $x$ lies strictly between Farey fractions of order $2q$ and Proposition~\ref{prop:unit-circle} lists all unit-circle points. Hence $r_{\max}<1$.

If $r_{\max}\e^{2\pi ix}$ were interior, a slightly larger point on the same ray would be spectral. Thus it is a nonreal open-disk boundary point. By Theorem~\ref{thm:boundary-exhaustion}, it lies on the selected carrier, and by Proposition~\ref{prop:raywise} that carrier has only one point on the ray. Therefore $r_{\max}=r_0(x)$.
\end{proof}

This identifies the carrier from the outside. The order of the argument is important: Lemma~\ref{lem:outermost} uses only compactness, boundary exhaustion, and raywise uniqueness; it does not assume radial filling or local inward realization. We may therefore use outermostness in the transversality argument without circularity. What remains is to determine which local side of the carrier is spectral.

\subsection{Active-face transversality}

\begin{theorem}[One-sided realization by an active parameter]\label{thm:active-face}
Let $U,V\subset\R^2$ be open, let $F:U\times V\to\R^2$ be $C^1$, and suppose $\lambda_0\ne0$ and
\[
 F(\lambda_0,\eta_0)=0,
 \qquad
 D_\eta F(\lambda_0,\eta_0)\ \text{is invertible}.
\]
Let $g,h_1,\ldots,h_m:V\to\R$ be $C^1$ and, near $\eta_0$, define the feasible parameter set by
\[
 \mathcal E=\{\eta:g(\eta)\ge0,\ h_\ell(\eta)\ge0\ (1\le\ell\le m)\}.
\]
Assume
\[
 g(\eta_0)=0,
 \qquad
 h_\ell(\eta_0)>0\quad(1\le\ell\le m).
\]
Let $\eta(\lambda)$ be the local implicit solution, and assume that the derivative of $g(\eta(\lambda))$ in the radial direction at $\lambda_0$ is nonzero. Then the locally feasible solutions, namely those with $\eta(\lambda)\in\mathcal E$, occupy exactly one side of the regular curve
\[
 g(\eta(\lambda))=0.
\]
If every such feasible solution realizes a spectral point and this zero curve is the selected outer trace, the feasible side is the radially inward side.
\end{theorem}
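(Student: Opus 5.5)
The plan is to apply the implicit-function theorem to $F$ and reduce the one-sided claim to the sign of a single scalar function. Since $D_\eta F(\lambda_0,\eta_0)$ is invertible, there are neighborhoods $U_0\ni\lambda_0$ and $V_0\ni\eta_0$ and a unique $C^1$ map $\eta:U_0\to V_0$ with $F(\lambda,\eta(\lambda))=0$. Moreover, every zero of $F$ in $U_0\times V_0$ has the form $(\lambda,\eta(\lambda))$. So the locally feasible solutions are exactly the points $\lambda\in U_0$ with $\eta(\lambda)\in\mathcal E$.

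Next I remove the inactive constraints. The functions $h_\ell\circ\eta$ are continuous and positive at $\lambda_0$. After shrinking $U_0$, they stay positive, so feasibility is equivalent to $G(\lambda):=g(\eta(\lambda))\ge0$.

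The function $G$ is $C^1$ with $G(\lambda_0)=0$. Its derivative along the radial direction $\lambda_0/|\lambda_0|$ (which needs $\lambda_0\ne0$) is nonzero by hypothesis. Hence $dG(\lambda_0)\ne0$, and $\{G=0\}$ is a regular $C^1$ curve near $\lambda_0$. It is transverse to the ray through $\lambda_0$, so it can be written locally as a radial graph $r=\gamma(\phi)$. After a further shrink, $U_0\setminus\{G=0\}$ has exactly two components, $\{G>0\}$ and $\{G<0\}$, and these are the two sides of the curve. The feasible set is then the closed side $\{G\ge0\}$, which is exactly one side. This proves the first assertion.

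For the second assertion, suppose every feasible solution realizes a spectral point and $\{G=0\}$ is the selected outer trace. The outer trace is the raywise maximum of $\Sigma_q$ (as in Lemma~\ref{lem:outermost}). Points radially beyond it on nearby rays, namely $r>\gamma(\phi)$, are therefore not spectral, so they cannot be feasible. Because the radial derivative of $G$ is nonzero, moving along a ray crosses the curve and changes the sign of $G$. So the outward side, which contains no feasible points, is one full side of the curve. Since the feasible set occupies exactly one side, it must be the other, radially inward side.

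The main obstacle is making the "two sides" statement rigorous in the final step. For this I would use the radial-graph description: because $\partial_r G(\lambda_0)\ne0$, the sign of $G$ at $re^{i\phi}$ is determined by the sign of $r-\gamma(\phi)$, uniformly near $\lambda_0$. This also covers the possibility that the radial derivative of $G$ is negative. In that case the labels of the sides flip, and the outermost argument above still identifies the feasible side as the inward one.
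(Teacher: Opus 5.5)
Your proposal is correct and follows essentially the same route as the paper. Both arguments use the implicit-function theorem, shrink the neighborhood so that the inactive constraints $h_\ell\circ\eta$ stay positive, reduce feasibility to $g\circ\eta\ge0$ with a regular zero curve, and then rule out the outward side by outermostness (Lemma~\ref{lem:outermost}); your radial-graph description $r=\gamma(\phi)$ just makes explicit the ``two local sides'' step that the paper states briefly.
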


\begin{proof}
The implicit-function theorem gives a unique $C^1$ map $\eta(\lambda)$ near $\lambda_0$. The nonzero radial derivative makes $g\circ\eta$ a submersion, so its zero set is a regular curve and its two strict signs occupy the two local sides. After shrinking the neighborhood, every $h_\ell\circ\eta$ remains positive. Feasibility is therefore equivalent there to the single condition $g\circ\eta\ge0$. If its feasible side were outward, it would produce spectral points beyond the outermost radius from Lemma~\ref{lem:outermost}; hence the feasible side is inward.
\end{proof}

\begin{theorem}[Local inward realization]\label{thm:local-inward}
Let $\lambda_0=r_0(x_0)\e^{2\pi ix_0}$ be a selected carrier point in an open Farey cell. Then a full one-sided neighborhood of the carrier on its radially inward side is contained in $\Sigma_q$. More precisely, there are $\delta,\eta>0$ such that
\[
 |x-x_0|<\delta,
 \qquad
 r_0(x)-\eta<r<r_0(x)
\]
imply $r\e^{2\pi ix}\in\Sigma_q$.
\end{theorem}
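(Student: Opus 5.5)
The plan is to apply Theorem~\ref{thm:active-face} directly, with a concrete choice of the unknown $\eta$, the equation $F$, the active constraint $g$ and the inactive constraints $h_\ell$ in each of the two cell types. The selected point $\lambda_0$ comes with an explicit realizing matrix on the edge $E_{f,g}$ (Proposition~\ref{prop:raywise}). The idea is to enlarge the edge by one extra barycentric direction of $\Sq$, so that the local parameter space is two-dimensional. Then $F(\lambda,\eta)=0$ is the complex (hence $\R^2$-valued) characteristic equation \eqref{eq:bary-charpoly}, and the extra coordinate is the single active inequality $g\ge0$.

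\emph{Base cells.} I take $\eta=(t,s)$, where $\Gamma_\lambda(s)=(1-t)\lambda^j+t\lambda^k$. Here $s\ge1$ is the active constraint $g=s-1$, and $t\in(0,1)$ and $s<s^*$ are strict. The solution at $\lambda_0$ is $(t_0,1)$.

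\emph{Lateral cells.} I take $\eta=(s,w)$ with
\[
 \Gamma_\lambda(s)=(1-w)\lambda^j+w\lambda^{i},
\]
where $\lambda^i$ is a neighbouring vertex of $K_q(\lambda_0)$ adjacent to $\lambda^j$ along the supporting side. The active constraint is $g=w\ge0$, while $1<s<s^*$ is strict by Proposition~\ref{prop:raywise}. Every such $\eta$ corresponds to a matrix in a triangular face $\conv\{A,V_j,V_i\}$, so every feasible solution is spectral.

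The first main step is invertibility of $D_\eta F$ at $\eta_0$, i.e.\ $\R$-linear independence of the two partial derivatives in $\C$.
\begin{itemize}
\item In the base case these are $\partial_t=\lambda^k-\lambda^j$ (the chord direction) and $\partial_s\Gamma=2(\lambda^q-1)$ at $s=1$. Independence says $\lambda^q-1$ is not parallel to the chord. In reciprocal coordinates this is the statement $H(z^q)\neq0$. I will get it from Theorem~\ref{thm:augmented-edge}(iii): $z^q$ lies strictly on the origin side, since $q\ne M,N$.
\item In the lateral case the derivatives are $\partial_s\Gamma$, which is proportional to $\xi^2-\xi$ (the tangent to $\Lambda$), and $\partial_w=\lambda^i-\lambda^j$. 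These are independent because $\Lambda$ supports $K_q$ only at the vertex $\lambda^j$. The strictness of that support follows from Proposition~\ref{prop:one-site-bridge}(iii) together with the strict half-parallelogram exclusion, i.e.\ no other power lies on $\Lambda$.
\end{itemize}

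The second main step, which I expect to be the main obstacle, is the nonzero radial derivative of $g(\eta(\lambda))$. By Cramer's rule this derivative is $\det(\partial_\eta F,\partial_r F)$ projected onto the $g$-component. My approach is to rewrite it through the reciprocal chord picture. Along the edge itself ($g=0$), the equation reduces to the one-parameter chord equation of Theorem~\ref{thm:chord}, and \eqref{eq:chord-transversality} gives $h'(R_0)\,dH(1)\ne0$. This says exactly that moving radially pushes the contact point off the supporting line, so the defining functional $H$ changes at first order. Since the extra direction ($z^q$, resp.\ the neighbouring vertex) lies strictly on one side of that line, the first-order change of $H$ must be compensated by a first-order change in $g$. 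Hence $\partial_r(g\circ\eta)\ne0$.

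Care is needed to transport the derivatives through $z=\bar\lambda^{-1}$ and the similarity $T$ of \eqref{eq:T-map}. These maps are conjugate-linear but smooth and orientation-consistent, and radial motion in $\lambda$ corresponds to radial motion in $R$.

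Finally, Theorem~\ref{thm:active-face} gives a one-sided feasible region. Lemma~\ref{lem:outermost} forces that side to be inward. Uniform $\delta,\eta$ come from the implicit-function neighbourhood together with continuity of $r_0$ from Proposition~\ref{prop:raywise}.
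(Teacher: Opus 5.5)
Your plan has the same structure as the paper's proof. It uses the same two-parameter $F$ (edge equation plus one extra barycentric direction of $\Sq$), the same active constraints, the same invertibility arguments, the same base-edge transversality via \eqref{eq:chord-transversality}, and the same appeal to Lemma~\ref{lem:outermost} to pick the side.

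The genuine difference is the lateral radial derivative. The paper computes $N(\partial_\rho F)$ explicitly in \eqref{eq:lateral-normal-derivative}, which gives a definite sign and hence $\tau'(0)<0$ outright. You instead pull back the chord transversality of $[z^q,v]$ (exponents $q$ and $L/2$) through $z=\overline\lambda^{-1}$ and $T$. That route is valid, and it treats both edge types by the single mechanism of Theorem~\ref{thm:chord}. However, it only makes sense after you factor the squared equation. Let $\Xi=1+s(\lambda^q-1)$ and let $\xi_\lambda$ be the continuous selected root of $\lambda^j$. Then $\Gamma_\lambda(s)-\lambda^j=(\Xi-\xi_\lambda)(\Xi+\xi_\lambda)$ with $\Xi+\xi_\lambda\approx2\xi\ne0$. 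Moreover, $z^q\overline{(\Xi-\xi_\lambda)}=s-(s-1)z^q-v$, where $v=z^q\overline{\xi_\lambda}$ is the continued root of $z^L$; this is $s$ times the displacement of $1$ from the chord point $(1-\tfrac1s)z^q+\tfrac1s v$.

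You should state this factorization explicitly, because it also supplies the step you left out. It shows that the local zero curve $\{w=0\}$ lies on the sector-selected branch, and hence is the graph of $r_0$ by Proposition~\ref{prop:raywise}. Without this, outermostness fixes the feasible side only along the ray $x_0$. It does not give the conclusion for $r_0(x)-\eta<r<r_0(x)$ at nearby $x$; this is exactly the paper's branch-continuity paragraph.

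Two smaller corrections:
\begin{itemize}
\item At $s=1$ one has $\partial_s\Gamma_\lambda=2\lambda^q(\lambda^q-1)$, not $2(\lambda^q-1)$. Your conclusion $H(z^q)\ne0$ is nonetheless the right one.
\item Strict support of $\Lambda$ at the single vertex $\lambda^j$ is true, but it does not come from the half-parallelogram theorem, whose Farey alternative gives no information about powers lying on $\Lambda$. It follows by rerunning the proof of Proposition~\ref{prop:one-site-bridge}\textup{(iii)} with $h$ of either sign, once Lemma~\ref{lem:outermost} has placed $\lambda_0$ on $\partial\Sigma_q$. The paper avoids needing it by choosing any power strictly on the polygon side, which exists because $K_q(\lambda_0)$ is two-dimensional.
\end{itemize}
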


\begin{proof}
We verify the hypotheses of Theorem~\ref{thm:active-face} for the two edge types.

\paragraph{Base edge.}
Use reciprocal coordinates $z=R\e^{2\pi ix}$. Let $M,N>q$ be the selected lifted exponents and write
\[
 1=(1-t_0)z_0^M+t_0z_0^N,
 \qquad 0<t_0<1.
\]
Define
\[
 F(z,s,t)=W_z(s)-\bigl((1-t)z^M+tz^N\bigr).
\]
At $(z_0,1,t_0)$,
\[
 F_s=2(1-z_0^q),
 \qquad
 F_t=z_0^M-z_0^N.
\]
Let $H=0$ be the supporting line through $z_0^M,z_0^N$, normalized with $H<0$ on the augmented-polygon side. By Theorem~\ref{thm:augmented-edge}, $z_0^q$ lies strictly on that side. Hence
\[
 dH(F_s)=-2H(z_0^q)\ne0,
 \qquad
 dH(F_t)=0.
\]
The vector $F_t$ is nonzero and tangent to the line, so $D_{(s,t)}F$ is invertible. The only active realization constraint is $s-1\ge0$; the inequalities $0<t<1$ and $s<s^*$ are strict.

To apply Theorem~\ref{thm:active-face} in spectral rather than reciprocal coordinates, set
\[
 \widehat F(\lambda,s,t)=F(\overline\lambda^{-1},s,t)
\]
near $\lambda_0=\overline z_0^{-1}$. Along $\lambda(\rho)=\e^\rho\lambda_0$, the reciprocal point is
\[
 z(\rho)=\overline{\lambda(\rho)}^{-1}
 =\e^{-\rho}z_0,
 \qquad
 R(\rho)=\e^{-\rho}R_0,
\]
so the angle is preserved and radial order is reversed.

Let $s(R),t(R)$ denote the implicit solution along the reciprocal ray $z=R\e^{2\pi ix_0}$. Since $W_z(1)=1$ is independent of $R$, the $R$-derivative of $F$ at fixed $(s,t)=(1,t_0)$ is the negative derivative of the moving chord point. Equation~\eqref{eq:chord-transversality} therefore gives
\[
 dH(\partial_RF)\ne0.
\]
Differentiating the implicit identity at $R_0$ and applying $dH$ yields
\[
\begin{aligned}
 0
 &=dH(\partial_RF)
   +dH(F_s)\frac{ds}{dR}
   +dH(F_t)\frac{dt}{dR}\\
 &=dH(\partial_RF)+dH(F_s)\frac{ds}{dR},
\end{aligned}
\]
because $dH(F_t)=0$. Since $dH(F_s)\ne0$,
\[
 \left.\frac{ds}{dR}\right|_{R=R_0}
 =-\frac{dH(\partial_RF)}{dH(F_s)}
 \ne0.
\]
Consequently
\[
 \left.\frac{d}{d\rho}s(R(\rho))\right|_{\rho=0}
 =-R_0\left.\frac{ds}{dR}\right|_{R=R_0}
 \ne0.
\]
The active constraint therefore cuts out a genuine local side. Its zero curve is the selected carrier by Theorem~\ref{thm:augmented-edge} and Proposition~\ref{prop:raywise}, so Theorem~\ref{thm:active-face}, applied to $\widehat F$, identifies the feasible side as inward in the $\lambda$-plane.

\paragraph{Lateral edge.}
At the selected contact write
\[
 \Gamma_{\lambda_0}(s_0)=\lambda_0^j,
 \qquad
 \xi=1+s_0(\lambda_0^q-1),
 \qquad
 \xi^2=\lambda_0^j,
 \qquad
 j=2q-L.
\]
By Lemma~\ref{lem:outermost}, the selected point is a boundary point, so Proposition~\ref{prop:one-site-bridge} applies. The line through $\xi,\xi^2$ supports the polygon and is not edge-aligned. Since $K_q(\lambda_0)$ is two-dimensional, at least one listed power $\lambda_0^k$ lies strictly on the polygon side of the supporting line. Introduce that second site:
\[
 F(\lambda,s,\tau)
 =\Gamma_\lambda(s)-\bigl((1-\tau)\lambda^j+\tau\lambda^k\bigr).
\]
At $(\lambda_0,s_0,0)$,
\[
 F_s=2(\lambda_0^q-1)\xi
 =\frac2{s_0}(\xi^2-\xi),
\]
which is tangent to the supporting line, while
\[
 F_\tau=\lambda_0^j-\lambda_0^k
\]
has nonzero normal component. Hence $D_{(s,\tau)}F$ is invertible. The only active constraint is $\tau\ge0$; the inequalities $1<s<s^*$ and $\tau<1$ are strict.

It remains to show radial transversality. Let $\lambda(\rho)=\e^\rho\lambda_0$ and differentiate at $\rho=0$ with $s=s_0$ and $\tau=0$. Since
\[
 s_0\lambda_0^q=\xi+s_0-1,
 \qquad
 \lambda_0^j=\xi^2,
 \qquad
 L=2q-j,
\]
we obtain
\begin{equation}\label{eq:lateral-radial-derivative}
\begin{aligned}
 \partial_\rho F
 &=2qs_0\lambda_0^q\xi-j\lambda_0^j\\
 &=2q(\xi+s_0-1)\xi-j\xi^2\\
 &=L\xi^2+2q(s_0-1)\xi.
\end{aligned}
\end{equation}

Put
\[
 d=\xi^2-\xi,
 \qquad
 N(w)=\operatorname{sgn}(\Im\xi)\Im(w\overline d).
\]
Then $N(F_s)=0$. The corresponding affine functional
\[
 \mathcal N(w)=N(w-\xi^2)
\]
vanishes on the supporting line through $\xi$ and $\xi^2$. Moreover,
\[
 \mathcal N(1)=|1-\xi|^2|\Im\xi|>0.
\]
Thus $\mathcal N$ is nonnegative on $K_q(\lambda_0)$. Since $\lambda_0^k$ was chosen strictly on the polygon side,
\[
 N(F_\tau)
 =N(\xi^2-\lambda_0^k)
 =-\mathcal N(\lambda_0^k)
 <0.
\]
A direct calculation using \eqref{eq:lateral-radial-derivative} gives
\begin{equation}\label{eq:lateral-normal-derivative}
 N(\partial_\rho F)
 =-|\xi|^2\bigl(L+2q(s_0-1)\bigr)|\Im\xi|
 <0.
\end{equation}
Applying $N$ to the differentiated implicit identity yields
\[
 0=N(\partial_\rho F)+N(F_\tau)\tau'(0),
\]
and hence
\[
 \tau'(0)
 =-\frac{N(\partial_\rho F)}{N(F_\tau)}
 <0.
\]
Thus the radially inward direction $\rho<0$ gives $\tau>0$, the feasible side of the active constraint.

Let $\mathcal Z$ be the local zero curve
\[
 \mathcal Z=\{\lambda:\tau(\lambda)=0\}
\]
through $\lambda_0$. After shrinking to a simply connected neighborhood $U$ of $\lambda_0$ that avoids zero and remains in the same Farey sector, $\mathcal Z\cap U$ is a connected regular arc and $\lambda^j$ has two separated continuous square-root branches on $U$. On $\mathcal Z\cap U$, the implicit equation gives
\[
 \bigl(1+s(\lambda)(\lambda^q-1)\bigr)^2=\lambda^j.
\]
The continuous root
\[
 1+s(\lambda)(\lambda^q-1)
\]
equals the sector-selected branch at $\lambda_0$. Since the two roots never meet on $U$, it cannot switch to the opposite branch along $\mathcal Z\cap U$. It therefore agrees with the selected branch throughout that local zero curve. Proposition~\ref{prop:raywise} then identifies $\mathcal Z\cap U$ with the selected carrier. Theorem~\ref{thm:active-face} gives the full inward spectral side.
\end{proof}

\subsection{Filling each open Farey cell}

\begin{proposition}[Connected-cell filling]\label{prop:cell-filling}
For every open Farey cell $(f,g)$,
\begin{equation}\label{eq:cell-filled}
 \left\{r\e^{2\pi ix}:f<x<g,\ 0<r<r_0(x)\right\}\subset\Sigma_q.
\end{equation}
\end{proposition}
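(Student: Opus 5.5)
The plan is a local-to-global connectedness argument inside the open cell. Put
\[
 G=\{(x,r):f<x<g,\ 0<r<r_0(x)\},
\]
which is connected, since $r_0$ is continuous (indeed real-analytic) by Proposition~\ref{prop:raywise}. Work with the relatively open set $O=\{(x,r)\in G:r\e^{2\pi ix}\in\Int\Sigma_q\}$ and its complement $G\setminus O$. The goal is to show that $O$ is nonempty and relatively closed in $G$. Connectedness then gives $O=G$, which yields \eqref{eq:cell-filled}.

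Nonemptiness comes from Theorem~\ref{thm:local-inward}. Near any carrier point, the thin band $r_0(x)-\eta<r<r_0(x)$ lies in $\Sigma_q$, and since it is an open set it lies in $\Int\Sigma_q$. The same theorem also controls the part of $G$ near its upper edge: every point of $G$ close enough to the carrier is interior.

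For relative closedness, take a point $(x_1,r_1)\in G$ that is a limit of points of $O$. Compactness of $\Sigma_q$ puts $\lambda_1=r_1\e^{2\pi ix_1}$ in $\Sigma_q$. Suppose $\lambda_1$ were not interior. Then $\lambda_1\in\partial\Sigma_q\cap\D^\circ$. It is nonreal with argument in $(\pi/q,\pi)$, because the cell lies in $(1/(2q),1/2)$, and it lies in the open Farey cell. Theorem~\ref{thm:boundary-exhaustion} then places $\lambda_1$ on the selected carrier of $E_{f,g}$. Proposition~\ref{prop:raywise} says this carrier meets the ray of angle $2\pi x_1$ only at radius $r_0(x_1)$. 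Hence $r_1=r_0(x_1)$, contradicting $r_1<r_0(x_1)$. So $\lambda_1\in\Int\Sigma_q$ and $(x_1,r_1)\in O$.

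This argument actually proves more. Every point of $G$ that lies in $\Sigma_q$ is interior, and every boundary point of $\Sigma_q$ inside the sector over $(f,g)$ lies on the carrier. I would therefore run the argument as follows. Suppose some point of $G$ is not in $\Sigma_q$. The set $G\cap\Sigma_q$ is nonempty and relatively closed by compactness, and it is open by the preceding paragraph. Since $G$ is connected, $G\cap\Sigma_q$ must be all of $G$, a contradiction.

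The main obstacle I expect is checking that Theorem~\ref{thm:boundary-exhaustion} applies with no hidden exceptions. It needs $q\ge3$ (the case $q=2$ is handled in Appendix~\ref{app:q2}). It also needs the point to avoid the real axis and the Farey endpoint rays, and to satisfy $\lambda^q\ne1$, which holds automatically in the open disk off the spoke rays. This is the one place where the argument leans on the complete exhaustion of boundary mechanisms rather than on the local inward realization.
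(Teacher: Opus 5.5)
Your proposal is correct and follows essentially the paper's own argument. Like the paper, you use Theorem~\ref{thm:boundary-exhaustion} with Proposition~\ref{prop:raywise} to rule out boundary points of $\Sigma_q$ strictly below the carrier, Theorem~\ref{thm:local-inward} for nonemptiness, and closedness of $\Sigma_q$ plus connectedness of the region under the continuous graph $r_0$ to conclude; your second formulation, with $G\cap\Sigma_q$ nonempty, relatively closed and relatively open, matches the paper's proof almost verbatim.
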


\begin{proof}
Let
\[
 D_{f,g}=\{r\e^{2\pi ix}:f<x<g,\ 0<r<r_0(x)\}.
\]
The function $r_0$ is continuous, so $D_{f,g}$ is connected. By Theorem~\ref{thm:boundary-exhaustion} and Proposition~\ref{prop:raywise}, no point of $D_{f,g}$ lies on $\partial\Sigma_q$. By Theorem~\ref{thm:local-inward}, the intersection $D_{f,g}\cap\Sigma_q$ is nonempty. It is relatively closed because $\Sigma_q$ is closed and relatively open because $D_{f,g}$ contains no boundary point of $\Sigma_q$. Connectedness gives $D_{f,g}\subset\Sigma_q$.
\end{proof}

\subsection{Endpoint limits and terminal parity}

\begin{lemma}[Endpoint half-angle strictness]\label{lem:endpoint-strictness}
In the notation of Lemma~\ref{lem:half-angle}, equality $2v=k$ forces $k=2$, and equality $2u=s$ forces $s=2$.
\end{lemma}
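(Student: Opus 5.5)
The plan is to revisit the proof of Lemma~\ref{lem:half-angle} and track when its inequalities can be sharp. Recall the setting: $h/k<r/s$ are consecutive in $\F_q^+$, so $kr-hs=1$ and $k+s>2q$. Also $u=\lceil q/k\rceil$ and $v=\lceil q/s\rceil$. By symmetry (replacing $x$ by $1-x$ reverses the Farey pair and swaps the roles of $(u,s)$ and $(v,k)$), it suffices to treat $2v=k$ and show that it forces $k=2$.

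First, suppose $2v=k$. Then $k$ is even. Since $kr-hs=1$, $s$ must be odd. From $v=\lceil q/s\rceil$ we get $q>(v-1)s$. From $k+s>2q\ge 2(v-1)s+2$ we then get
\[
 2v+s>2(v-1)s+2,
\]
that is,
\[
 2v-2>2(v-1)s-s, \qquad\text{equivalently}\qquad (v-1)(2s-2)<s .
\]
If $v\ge2$, this gives $2s-2<s$, so $s<2$. That is impossible, since every endpoint of $\F_q^+$ lies in $[1/(2q),1/2]$, which forces $s\ge2$. Hence $v=1$ and $k=2v=2$. The case $2u=s$ is identical with the roles exchanged: $2u=s$, $u=\lceil q/k\rceil$ and $k+s>2q$ force $u=1$, hence $s=2$.

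The only step needing care is making sure the strict inequality $k+s>2q$ is combined with the strict lower bound $q>(v-1)s$ rather than a weak one; with both strict, the integer bookkeeping closes immediately. I would also double-check that $s\ge2$ for endpoints in $\F_q^+$ with $q\ge3$. This holds because the only fraction with denominator $1$ in $[0,1]$ is $0$ or $1$, and both lie outside $[1/(2q),1/2]$.

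No earlier machinery beyond \eqref{eq:farey-adjacency} and the definitions \eqref{eq:MN} should be needed. The parity remark ($k$ even forces $s$ odd) is not even essential, but it serves as a consistency check with the final clause of Lemma~\ref{lem:half-angle}.
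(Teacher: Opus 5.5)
Your argument is correct and is essentially the paper's: both combine $(v-1)s<q$ with $k+s>2q$ and $k=2v$ to force $v=1$. The difference is that you use integrality ($2q\ge 2(v-1)s+2$) to get $(v-1)(2s-2)<s$ directly. The paper instead works with the weaker $2v(s-1)<3s$, and so needs the parity of $s$ to isolate and then exclude the residual case $s=3$, $v=2$, $k=4$. Your $x\mapsto 1-x$ remark does not literally preserve $\F_q^+$, but it is not needed: the second claim follows from the plain relabeling $(k,v)\leftrightarrow(s,u)$ that you in fact carry out.
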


\begin{proof}
Assume $2v=k$. Since $kr-hs=1$, the denominator $s$ is odd. The inequalities
\[
 (v-1)s<q,
 \qquad
 k+s>2q
\]
give
\[
 2v(s-1)<3s.
\]
If $v\ge2$, this forces $s=3$ and $v=2$, hence $k=4$. But $v=\lceil q/3\rceil=2$ gives $q\ge4$, whereas $k+s=7>2q$ gives $q<7/2$, a contradiction. Thus $v=1$ and $k=2$. Interchanging $(h,k,u)$ with $(r,s,v)$ proves the second assertion.
\end{proof}

\begin{theorem}[Endpoint limits of the selected radius]\label{thm:endpoint-limits}
Let $h$ be an internal endpoint of $\F_q^+$, so
\[
 \frac1{2q}<h<\frac12.
\]
The selected radii in the two adjacent cells satisfy
\[
 r_0(x)\longrightarrow1
 \qquad (x\to h).
\]
At the terminal endpoint $1/2$, the selected radius tends to $1$ when $q$ is even and to $\rho_q$ from \eqref{eq:terminal-rho} when $q$ is odd.
\end{theorem}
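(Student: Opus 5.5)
The plan is to read the limits straight off the scalar radius equations \eqref{eq:base-radius} and \eqref{eq:lateral-radius} of Proposition~\ref{prop:raywise}. On a fixed cell, write each equation as $\Phi_x(\rho)=0$ on $[0,1]$. Both $\Phi_x$ and its $x\to$ endpoint limit $\Phi$ are continuous and increasing in $\rho$, so the convergence is uniform on $[0,1]$. By compactness every limit point $\rho^\ast$ of $r_0(x)$ satisfies $\Phi(\rho^\ast)=0$. Hence whenever $\Phi$ has a unique zero, $r_0(x)$ converges to that zero. The work is therefore to compute the limiting angles at each endpoint and check that the limiting equation is nondegenerate.

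\emph{Internal endpoints, base cells.} Let $(h/k,r/s)$ be a base cell. As $x\downarrow h/k$ we have $\alpha\to0$ and $\beta\to\beta_0=2\pi v(r-sh/k)=2\pi v/k$. Lemma~\ref{lem:half-angle} gives $2v\le k$, and Lemma~\ref{lem:endpoint-strictness} shows that equality forces $k=2$, i.e.\ $h/k=1/2$. That case is excluded because $h/k$ is a left endpoint strictly below $1/2$. So $0<\beta_0<\pi$, and the limiting equation is $\rho^M\sin\beta_0=\sin\beta_0$, whose only root in $[0,1]$ is $\rho=1$. Symmetrically, as $x\uparrow r/s$ we have $\beta\to0$ and $\alpha\to2\pi u/s$. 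Here $2u<s$ unless $s=2$, which is exactly the terminal endpoint $1/2$. So at every internal right endpoint the limit equation is $\rho^N\sin\alpha_0=\sin\alpha_0$, and again $\rho\to1$.

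\emph{Internal endpoints, lateral cells.} Let the lift-$q$ endpoint be $h/k$ and the other endpoint $\ell/L$. At $h/k$, $\alpha\to0$ and $\beta\to\pi|\ell k-hL|/k=\pi/k\in(0,\pi)$, since $k\ge2$. The limit equation is $\rho^q\sin\beta_0=\sin\beta_0$, so $\rho\to1$. At $\ell/L$, $\beta\to0$ and $\alpha\to2\pi q|\ell k-hL|/(kL)=2\pi q/(kL)$, which is $<\pi$ because $k\ge2$ and $L>q$. The limit equation $\rho^{L/2}\sin\alpha_0=\sin\alpha_0$ again forces $\rho\to1$.

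\emph{The terminal endpoint $1/2$.} The Farey predecessor of $1/2$ in $\F_{2q}$ is $(q-1)/(2q-1)$, with lift $M=2q-1$ and $u=1$, while $L_q(1/2)=2\lceil q/2\rceil$. For even $q$ the last cell is lateral with $k=2$. The lateral computation above gives $\beta\to\pi/2$ and $\alpha\to0$, so $\rho\to1$. For odd $q$ the last cell is a base cell with $s=2$, $v=(q+1)/2$ and $N=q+1$. Setting $x=1/2-\delta$ gives
\[
\beta=2\pi(q+1)\delta,\qquad \alpha=\pi-2\pi(2q-1)\delta,\qquad \alpha+\beta=\pi-2\pi(q-2)\delta,
\]
so all three sines vanish to first order. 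This degenerate case is the main obstacle. I would divide \eqref{eq:base-radius} by $2\pi\delta$. By $\sin y/y\to1$ the rescaled functions converge uniformly in $\rho\in[0,1]$ to
\[
(2q-1)\rho^{q+1}+(q+1)\rho^{2q-1}-(q-2),
\]
which is \eqref{eq:terminal-rho}. This limit is strictly increasing, equals $-(q-2)<0$ at $\rho=0$, and equals $2q+3>0$ at $\rho=1$, so it has a unique simple zero $\rho_q\in(0,1)$ (here $q\ge3$; the case $q=2$ is even, and is in any case treated in Appendix~\ref{app:q2}). Uniform convergence plus the compactness argument of the first paragraph then gives $r_0(x)\to\rho_q$.
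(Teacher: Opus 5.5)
Your proposal is correct and follows the paper's proof. It uses the same angle limits, the same appeal to Lemma~\ref{lem:half-angle} and Lemma~\ref{lem:endpoint-strictness} to keep the surviving angle in $(0,\pi)$, and the same rescaling by $2\pi\delta$ at the odd terminal endpoint; the only cosmetic difference is that you close by uniform convergence and compactness in $\rho\in[0,1]$, whereas the paper uses monotone bracketing in the reciprocal radius $R$. One small arithmetic slip: the limiting function at $\rho=1$ is $(2q-1)+(q+1)-(q-2)=2q+2$, not $2q+3$, which does not affect the sign argument.
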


\begin{proof}
Both edge types reduce to the chord equation \eqref{eq:chord-radius}. At an internal endpoint, exactly one of the two chord angles tends to zero and the other tends to a number in $(0,\pi)$.

For a base-edge cell $h/k<r/s$, the angle limits are
\[
 \begin{array}{c|cc}
 &\alpha&\beta\\ \hline
 x\to h/k&0&2\pi v/k\\
 x\to r/s&2\pi u/s&0.
 \end{array}
\]
The nonzero limits are strictly below $\pi$ by Lemma~\ref{lem:endpoint-strictness}, because an internal endpoint is not $1/2$. For a lateral-edge cell with lift-$q$ endpoint $h/k$ and other endpoint $\ell/L$, the nonzero limits are $\pi/k$ and $2\pi q/(kL)$, both in $(0,\pi)$ because $k\ge2$ and $kL>2q$.

Consequently $h_x(1)\to1$, whereas for every fixed reciprocal radius $R>1$, the chord function $h_x(R)$ tends to either $R^u$ or $R^v$, which is greater than one. Since $R\mapsto h_x(R)$ is strictly increasing and its selected root is unique, the selected reciprocal radius tends to $1$, hence the spectral radius tends to $1$.

The Farey predecessor of $1/2$ in $\F_{2q}$ is
\[
 \frac{q-1}{2q-1},
\]
whose lift is $2q-1$. If $q$ is even, the lift of $1/2$ is $q$, so the terminal edge is lateral. The two chord angles tend to $0$ and $\pi/2$, and the same argument gives reciprocal radius $R\to1$.

If $q$ is odd, the lift of $1/2$ is $q+1$, so the terminal edge is the base edge $V_1V_{q-1}$. Put $x=1/2-\epsilon$. The lifted exponents are $2q-1$ and $q+1$, and the angles are
\[
 \alpha_\epsilon=\pi-2\pi(2q-1)\epsilon,
 \qquad
 \beta_\epsilon=2\pi(q+1)\epsilon.
\]
The equation $h_x(R)=1$ is equivalent to
\[
 \sin(\alpha_\epsilon+\beta_\epsilon)
 =R^{-(q+1)}\sin\alpha_\epsilon
 +R^{-(2q-1)}\sin\beta_\epsilon.
\]
After division by $2\pi\epsilon$, the equation converges locally uniformly for $R>1$ to
\begin{equation}\label{eq:terminal-R}
 q-2=(2q-1)R^{-(q+1)}+(q+1)R^{-(2q-1)}.
\end{equation}
The right side is strictly decreasing from $3q$ to $0$, so \eqref{eq:terminal-R} has a unique solution $R_*>1$. To make the convergence of the selected roots explicit, choose $1<R_-<R_*<R_+$. The limiting equation has opposite residual signs at $R_-$ and $R_+$; local uniform convergence preserves those signs for all sufficiently small $\epsilon$, and strict monotonicity traps the selected root between $R_-$ and $R_+$. Letting the two brackets tend to $R_*$ proves convergence. With $\rho_q=R_*^{-1}$, equation \eqref{eq:terminal-R} is exactly \eqref{eq:terminal-rho}.
\end{proof}

The parity split is encoded by the final vertex label:
\begin{equation}\label{eq:terminal-label}
 \vartheta_q(1/2)=
 \begin{cases}
 *,&q\ \text{even},\\
 q-1,&q\ \text{odd}.
 \end{cases}
\end{equation}
The predecessor always has label $1$, so the terminal edge is $V_1A$ for even $q$ and $V_1V_{q-1}$ for odd $q$.

\begin{lemma}[Two-sided endpoint filling]\label{lem:endpoint-filling}
Let $\delta>0$, and let
\[
 r_-:(x_*-\delta,x_*)\to(0,1],
 \qquad
 r_+:(x_*,x_*+\delta)\to(0,1]
\]
be continuous. Suppose that, for $x$ in the corresponding one-sided interval,
\[
 \{r\in(0,1]:r\e^{2\pi ix}\in\Sigma_q\}=
 \begin{cases}
 (0,r_-(x)],&x<x_*,\\
 (0,r_+(x)],&x>x_*,
 \end{cases}
\]
and that $r_-(x),r_+(x)\to R_*>0$ as $x\to x_*$ from their respective sides. Then every point
\[
 r\e^{2\pi ix_*},
 \qquad 0<r<R_*,
\]
is an interior point of $\Sigma_q$.

If, in addition, the limiting ray is spectral for $R_*\le r\le R_1\le1$, then every point in that radial interval is a boundary point.
\end{lemma}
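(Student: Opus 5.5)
The idea is to show that a small disk around each point $\lambda_*=r\e^{2\pi ix_*}$ with $0<r<R_*$ lies in $\Sigma_q$, using only the one-sided radial descriptions and closedness of $\Sigma_q$ (Proposition~\ref{prop:elementary}).

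\textbf{Interior points.} Fix $0<r<R_*$ and choose $r'$ with $r<r'<R_*$. Because $r_\pm(x)\to R_*$, there is $\delta'\in(0,\delta)$ such that $r_-(x)>r'$ for $x_*-\delta'<x<x_*$ and $r_+(x)>r'$ for $x_*<x<x_*+\delta'$. By hypothesis, every point $\rho\e^{2\pi ix}$ with $0<\rho\le r'$ and $0<|x-x_*|<\delta'$ is then spectral. The open annular sector
\[
S=\{\rho\e^{2\pi ix}:r''<\rho<r',\ |x-x_*|<\delta'\},
\qquad 0<r''<r,
\]
minus the ray $x=x_*$ is therefore contained in $\Sigma_q$. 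The ray points of $S$ are limits of off-ray points of $S$, so closedness of $\Sigma_q$ puts them in $\Sigma_q$ as well. Hence $S\subset\Sigma_q$. Since $S$ is an open neighborhood of $\lambda_*$, the point $\lambda_*$ is interior.

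\textbf{Boundary points.} Now suppose the limiting ray is spectral for $R_*\le r\le R_1\le1$, and fix such an $r$. Take any $r_2>r$ and points on nearby rays $x\ne x_*$ at radius between $r$ and $r_2$, approaching $\lambda_*=r\e^{2\pi ix_*}$. A point at radius $\rho$ on a nearby ray is outside $\Sigma_q$ as soon as $\rho>r_\pm(x)$, or when $\rho>1$.

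\emph{Case $r>R_*$.} Since $r_\pm(x)\to R_*<r$, the points $r\e^{2\pi ix}$ with $x\to x_*$ are non-spectral for $x$ close to $x_*$. They converge to $\lambda_*$, so $\lambda_*$ is not interior. It is spectral by hypothesis, hence it is a boundary point.

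\emph{Case $r=R_*$.} Use instead the points $(R_*+\epsilon)\e^{2\pi ix}$. For fixed $\epsilon>0$ and $x$ close enough to $x_*$, we have $r_\pm(x)<R_*+\epsilon$, so these points are non-spectral. Letting $\epsilon\to0$ and then $x\to x_*$ produces non-spectral points converging to $\lambda_*$. If $R_*=1$, points of modulus greater than $1$ already serve as non-spectral approximants. In either case $\lambda_*$ is spectral but not interior, hence a boundary point.

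Nothing here is a serious obstacle. The one point needing care is that the one-sided limits guarantee the uniform radial margin $r'$ near $x_*$, and this uniformity is exactly what the neighborhood argument requires.
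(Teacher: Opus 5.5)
Your proposal is correct and follows the paper's argument. The one-sided limits give a uniform radial margin, so a polar rectangle with the central ray removed is spectral, and closedness of $\Sigma_q$ fills that ray; boundary points are then detected by non-spectral points above the outer graph $r_\pm(x)$ on nearby rays, or outside the unit disk. Your split into $r>R_*$ and $r=R_*$ only makes explicit the paper's single ``nearby angle above the outer graph'' choice. In the $r=R_*$ case, state that $x$ is chosen depending on $\epsilon$ (a diagonal choice), because taking the limits $\epsilon\to0$ and then $x\to x_*$ in sequence, as literally written, does not produce non-spectral points.
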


\begin{proof}
Fix $0<r<R_*$. For nearby angles on either side, the two outer-radius functions exceed $r$ by a uniform margin. Hence a small polar rectangle around $(r,x_*)$, with the central angular line removed, is spectral. Closedness of $\Sigma_q$ fills that line as well. Since $r>0$, polar coordinates are a local homeomorphism, so the point is genuinely interior.

For $R_*\le r\le R_1$, the point itself is spectral by assumption. Every neighborhood also contains nonspectral points: if $r<1$, take a nearby angle and a radius just above the corresponding outer graph, using its convergence to $R_*$; if $r=1$, take a point outside the unit disk. Hence every point of the stated interval lies on the boundary.
\end{proof}

\subsection{The negative real axis}

At the terminal angle $x=1/2$, the adjacent cell below the negative real axis is the complex conjugate of the terminal cell above it. Thus conjugation supplies the second one-sided outer graph required in Lemma~\ref{lem:endpoint-filling}. For even $q$, Theorem~\ref{thm:endpoint-limits} and that lemma show that $(-1,0)$ is interior and $-1$ is boundary.

For odd $q$, the terminal edge is $V_1V_{q-1}$ and has equation
\begin{equation}\label{eq:terminal-edge}
 \lambda^{2q}=(1-t)\lambda+t\lambda^{q-1}.
\end{equation}
At $\lambda=-r$, this equation is solved by
\begin{equation}\label{eq:negative-parameter}
 t(r)=\frac{1+r^{2q-1}}{1+r^{q-2}},
 \qquad 0\le r\le1.
\end{equation}
Indeed, $q-1$ is even and
\[
 r^{2q}=-r+t(r+r^{q-1}).
\]
Also $0\le t(r)\le1$ because $r^{2q-1}\le r^{q-2}$. Thus the entire negative real segment is spectral on the same terminal base edge. Theorem~\ref{thm:endpoint-limits} and Lemma~\ref{lem:endpoint-filling} give
\begin{equation}\label{eq:negative-boundary}
 [-1,-\rho_q]\subset\partial\Sigma_q,
 \qquad
 (-\rho_q,0)\subset\Int\Sigma_q.
\end{equation}
The complex terminal carrier and the negative-real segment are therefore two visible branches of one edge root locus.

\subsection{Proof of the main theorem}

\begin{proof}[Proof of Theorem~\ref{thm:main}]
The case $q=2$ is proved in Appendix~\ref{app:q2}. Assume $q\ge3$.

Fix an open Farey cell. Boundary exhaustion, raywise uniqueness, and outermostness identify the selected carrier as the visible outer graph on every ray of the cell. The selected point is spectral by its explicit edge realization, and Proposition~\ref{prop:cell-filling} fills every smaller radius. Hence \eqref{eq:main-region} and \eqref{eq:main-boundary} hold away from Farey endpoints.

At every internal Farey endpoint, Theorem~\ref{thm:endpoint-limits} gives outer radius tending to one from both adjacent cells. Outermostness and cell filling give the full ray-section hypothesis of Lemma~\ref{lem:endpoint-filling}, so every point below the unit root is interior. The unit root itself is spectral by Proposition~\ref{prop:unit-circle} and is boundary because no stochastic eigenvalue has modulus greater than one.

The first rays are boundary by Proposition~\ref{prop:elementary}, and the positive real segment is boundary because it is spectral while arbitrarily small perturbations into the angular gap are not. The terminal negative-real behavior is exactly \eqref{eq:terminal-label} and \eqref{eq:negative-boundary}: for even $q$, $(-1,0)$ is interior and the endpoint $-1$ is boundary; for odd $q$, $[-1,-\rho_q]$ is boundary and $(-\rho_q,0)$ is interior. Conjugation supplies the lower half-plane. This proves the polar description, the boundary decomposition \eqref{eq:terminal-boundary-set}, the real and unit-circle sections, and radial filling.

Two-face generation is Theorem~\ref{thm:two-face}. Every boundary carrier in an open cell is an edge root locus by construction. The positive-real segment and first rays are realized on $AV_0$ by \eqref{eq:spoke-factor}; internal unit roots are realized at simplex vertices; the even terminal point $-1$ is realized at the apex $A$; and the odd negative-real segment is realized on $V_1V_{q-1}$. Since every vertex lies on an edge of $\Sq$, every boundary point has an edge realization.
\end{proof}

\section{Consequences and perspective}\label{sec:perspective}

The parameter-simplex formulation separates the general convex-geometric content from the model-specific Farey arithmetic.
\begin{enumerate}[label=\textup{(\roman*)}]
\item The original family is a product polytope, but its entire spectral union is already attained on the balanced simplex $\Sq$.
\item Transfer maximality reduces every spectral witness to a triangular face $\conv\{A,V_j,V_k\}$. Because this two-skeletal step uses only planar convexity, it is a natural candidate to persist for broader convex terminal-reset classes.
\item The visible boundary is one-skeletal. Farey arithmetic does not merely index polynomial equations; it selects a walk on the graph of the parameter simplex.
\item The distinction between renewal and one-site carriers is the distinction between base and lateral simplex edges. The branch-specific square root is a geometric orientation datum for a lateral edge, not an independent boundary mechanism.
\item The terminal parity phenomenon is encoded by the final vertex label. For odd $q$, both the terminal complex arc and the negative-real segment belong to the root locus of the same base edge $V_1V_{q-1}$; for even $q$, the terminal carrier closes at the apex eigenvalue $-1$.
\end{enumerate}

The arguments suggest, but do not by themselves establish, a broader program for structured stochastic families: identify a low-dimensional isospectral parameter polytope, convert membership to intersection with an evaluation polytope, prove low-skeletal generation by maximal contact, and determine which parameter edges are visible. Equal path lengths and the square transfer map are specific to the present model; unequal path lengths or additional layers would require new arithmetic and separate visibility arguments.

\appendix

\section{Proof of the half-parallelogram trichotomy}\label{app:half-parallelogram}

The proof organizes the lattice points by determinant coordinates. Writing $P=dP_*$ separates the content $d$ of $P$ from its primitive direction $P_*$, while $\delta=\det(P_*,Q)$ controls the determinant fibres. After choosing a unimodular companion $C$, every lattice point has the form $U=xP_*+yC$; the integer $y=\det(P_*,U)$ indexes the fibre, and points in the same fibre differ by translates of $P_*$. The case $\delta=1$ isolates the determinant-one Farey alternative, while $d=1$, $\delta=2$, and $Q\in2\Z^2$ give the edge-aligned exception. In every remaining configuration, the fibre description produces an additional admissible lattice point, yielding the obstructing alternative of Theorem~\ref{thm:half-parallelogram}.

\begin{proof}[Proof of Theorem~\ref{thm:half-parallelogram}]
Choose $C\in\Z^2$ with $\det(P_*,C)=1$ and write
\begin{equation}\label{eq:Q-unimodular}
 Q=\varrho P_*+\delta C.
\end{equation}
Every lattice point has a unique representation
\[
 U=xP_*+yC,
 \qquad x,y\in\Z.
\]
Put
\[
 \mu=\det(P,U),
 \qquad
 \nu=\det(U,Q).
\]
Since $P=dP_*$,
\begin{equation}\label{eq:mu-nu}
 \mu=dy,
 \qquad
 \nu=\delta x-\varrho y.
\end{equation}
If $\Delta=\det(P,Q)=d\delta$, then $U\in H(P,Q)$ is equivalent to
\begin{equation}\label{eq:fibre-conditions}
 \left\lceil\frac\delta2\right\rceil\le y\le\delta,
 \qquad
 0\le\delta x-\varrho y\le d\delta.
\end{equation}
For every admissible $y$, define the canonical point in that fibre by
\begin{equation}\label{eq:canonical-fibre}
 x_y=\left\lceil\frac{\varrho y}{\delta}\right\rceil,
 \qquad
 \nu_y=\delta x_y-\varrho y\in\{0,\ldots,\delta-1\},
 \qquad
 U_y=x_yP_*+yC.
\end{equation}
Let $m_y$ denote the first coordinate of $U_y$. If $C=(c,\cdot)$, the first-coordinate relation $L=\varrho r+\delta c$ gives
\begin{equation}\label{eq:my}
 m_y=\frac{yL+\nu_y r}{\delta}.
\end{equation}
Every other lattice point in the same fibre is $U_y+kP_*$. This translation replaces $\nu_y$ by $\nu_y+k\delta$ and adds $kr$ to the first coordinate. It remains in $H(P,Q)$ exactly while
\begin{equation}\label{eq:k-range}
 0\le\nu_y+k\delta\le d\delta.
\end{equation}
Thus $k=0,\ldots,d-1$ are always allowed, and $k=d$ is allowed exactly when $\nu_y=0$.

\paragraph{Case $\delta=1$.}
There is only the fibre $y=1$, and its lattice points are
\[
 Q+kP_* ,
 \qquad 0\le k\le d.
\]
The point $k=0$ is $Q$. Every $k\ge1$ has first coordinate $L+kr>q$, and these coordinates increase with $k$. Hence an admissible nonendpoint point exists exactly when $L+r\le2q$. In that case the first such point, $Q+P_*$, cannot equal $P+Q/2$, because that identity would make $Q$ a multiple of $P_*$ and force $\det(P_*,Q)=0$. If $L+r>2q$, then every nonendpoint lies beyond the window and $\mathcal L(P,Q)=\{Q\}$, which is alternative \textup{(ii)}. Since $\det(P_*,Q)=1$, the vector $Q$ cannot be even, so the midpoint corner is not integral in this case.

\paragraph{The edge-aligned case.}
Assume $d=1$, $\delta=2$, and $Q\in2\Z^2$. Since the basis $(P_*,C)=(P,C)$ is unimodular, \eqref{eq:Q-unimodular} shows that $\varrho$ is even. In the fibre $y=1$, the lattice points are $Q/2$ and $P+Q/2$; their first coordinates are $L/2\le q$ and $q+L/2\in(q,2q]$. In the fibre $y=2$, the lattice points are $Q$ and $Q+P$; their first coordinates are $L\in(q,2q]$ and $L+q>2q$. Therefore
\[
 \mathcal L(P,Q)=\{Q,P+Q/2\},
\]
which is alternative \textup{(iii)}.

We now assume that neither \textup{(ii)} nor \textup{(iii)} holds and construct a point in alternative \textup{(i)}.

\paragraph{Case $\delta\ge2$ and $d\ge2$.}
Take
\[
 y_0=\left\lceil\frac\delta2\right\rceil<\delta.
\]
Since $L>q$, $r=q/d\le q/2$, and $0\le\nu_{y_0}<\delta$, formula \eqref{eq:my} gives
\[
 m_{y_0}\ge\frac L2>\frac q2\ge r
\]
and
\[
 m_{y_0}
 <\frac{y_0L}{\delta}+r
 \le\frac L2+\frac{L}{2\delta}+r
 \le q+\frac q\delta+\frac qd
 \le2q.
\]
If $m_{y_0}>q$, use $U_{y_0}$. Otherwise choose the least $k\ge1$ for which $m_{y_0}+kr>q$. Since $m_{y_0}>r$, this minimal $k$ satisfies $k\le d-1$, so \eqref{eq:k-range} remains valid. Minimality gives
\[
 q<m_{y_0}+kr\le q+r<2q.
\]
The constructed point is not $Q$ because $y_0<\delta$. It is not $P+Q/2$: the corner has $\nu=d\delta$, whereas the constructed point has
\[
 \nu_{y_0}+k\delta\le(\delta-1)+(d-1)\delta=d\delta-1.
\]
Thus alternative \textup{(i)} holds.

\paragraph{Case $\delta\ge2$ and $d=1$.}
Now $P=P_*$ and $r=q$. Put
\begin{equation}\label{eq:T}
 T=\frac{q\delta}{L},
 \qquad
 \frac\delta2\le T<\delta.
\end{equation}

Suppose first that $T\in\Z$ and take $y=T$. Formula \eqref{eq:my} gives
\[
 m_y=q+\frac{\nu_y q}{\delta}.
\]
If $\nu_y>0$, then $q<m_y<2q$ and $U_y$ gives alternative \textup{(i)}. If $\nu_y=0$, the translate $U_y+P$ is allowed and has first coordinate $2q$. Unless this point is the corner, alternative \textup{(i)} again holds.

If $U_y+P=P+Q/2$, then $U_y=Q/2$, so $y=\delta/2$, $L=2q$, and $Q$ is even. The case $\delta=2$ is alternative \textup{(iii)}, already excluded. Hence $\delta\ge4$ is even. Put $y'=\delta/2+1<\delta$. In the unimodular basis, evenness of $Q$ means that both $\varrho$ and $\delta$ are even. Therefore
\[
 \nu_{y'}\equiv-\varrho y'\equiv-\varrho\pmod\delta
\]
is an even residue and cannot equal $\delta-1$. Thus $\nu_{y'}\le\delta-2$, and, since $L=2q$,
\[
 m_{y'}=q+\frac{(2+\nu_{y'})q}{\delta}
 \in(q,2q].
\]
The fibre index $y'\ne\delta/2$, so this point is not the corner. Alternative (i) follows.

Assume now that $T\notin\Z$ and put
\[
 y_- =\lfloor T\rfloor,
 \qquad
 y_+=\lceil T\rceil=y_-+1.
\]
If $y_+=\delta$, then $y_-=\delta-1$. Since $y_-<T$, one has $m_{y_-}<2q$. If $m_{y_-}>q$, use $U_{y_-}$. If $m_{y_-}\le q$, then
\[
 \nu_{y_-}q
 \le q\delta-(\delta-1)L
 <q,
\]
so $\nu_{y_-}=0$. The translate $U_{y_-}+P$ is allowed and has first coordinate in $(q,2q]$. It can be the corner only when $\delta=2$, in which case $Q$ is even and alternative \textup{(iii)} holds; that case has been excluded.

If $y_-<\lceil\delta/2\rceil$, then $\delta$ is odd and $y_+=(\delta+1)/2$. Since $y_+>T$, one has $m_{y_+}>q$, while
\[
 m_{y_+}
 \le\frac{y_+L+(\delta-1)q}{\delta}
 \le\frac{(\delta+1)q+(\delta-1)q}{\delta}
 =2q.
\]
Thus $U_{y_+}$ gives alternative \textup{(i)}.

It remains to consider
\[
 \left\lceil\frac\delta2\right\rceil\le y_-<y_+<\delta.
\]
If $m_{y_-}>q$, use $U_{y_-}$; if $m_{y_+}\le2q$, use $U_{y_+}$. Both are canonical points in nonendpoint fibres and cannot equal the corner, whose determinant coordinate is $\nu=\delta$. Suppose, for contradiction, that
\[
 m_{y_-}\le q,
 \qquad
 m_{y_+}>2q.
\]
Set
\[
 H=U_{y_+}-U_{y_-},
 \qquad
 h=H_1=m_{y_+}-m_{y_-}>q,
 \qquad
 \sigma=\det(H,Q)=\nu_{y_+}-\nu_{y_-}.
\]
Since $y_+-y_-=1$ and $P$ is primitive,
\[
 \det(P,H)=1.
\]
In the unimodular basis $(P,H)$,
\[
 Q=-\sigma P+\delta H,
\]
so the first coordinate satisfies
\begin{equation}\label{eq:L-delta-h}
 L=\delta h-\sigma q.
\end{equation}
Because $h>q$ and $L\le2q$,
\[
 \delta q<\delta h=L+\sigma q\le(2+\sigma)q,
\]
which gives $\delta<\sigma+2$. On the other hand, $0\le\nu_{y_\pm}\le\delta-1$, so $\sigma\le\delta-1$. Hence
\[
 \sigma=\delta-1,
 \qquad
 \nu_{y_-}=0,
 \qquad
 \nu_{y_+}=\delta-1.
\]
Write $U_{y_-}=uP+y_-H$. Using $Q=-(\delta-1)P+\delta H$ gives
\[
 0=\nu_{y_-}=\det(U_{y_-},Q)=u\delta+y_-(\delta-1).
\]
Modulo $\delta$, this implies $\delta\mid y_-$, contradicting $0<y_-<\delta$. Thus alternative \textup{(i)} holds in every remaining case.

The three alternatives are mutually exclusive by their explicit descriptions. If the oriented determinant is negative, reflect the second coordinate:
\[
 (q,a),(L,b),(m,n)\longmapsto(q,-a),(L,-b),(m,-n).
\]
This preserves the first-coordinate window, parity, and the endpoint and corner configurations.
\end{proof}

\section{The planar-degenerate case \texorpdfstring{$q=2$}{q=2}}\label{app:q2}

For $q=2$, the power polygon $K_2(\lambda)=[1,\lambda]$ is one-dimensional, so the ordinary-interior arguments used for $q\ge3$ do not apply. The transfer criterion nevertheless gives a direct complete calculation.

\begin{proposition}[The region for $q=2$]\label{prop:q2}
In the closed upper half-plane, the nonreal boundary of $\Sigma_2$ consists of
\[
 \mathcal I=\{iy:0\le y\le1\},
\]
\[
 \mathcal A=
 \left\{x+i\sqrt{1+2x+3x^2}:-\frac12\le x\le0\right\},
\]
and
\[
 \mathcal V=
 \left\{x+i\sqrt{(x-1)(3x+1)+4x\sqrt{-2x-1}}:-1\le x\le-\frac12\right\}.
\]
Together with $[0,1]$ and complex conjugation, these are exactly the boundary pieces prescribed by the Farey cells
\[
 \frac14<\frac13<\frac12.
\]
The interval $(-1,0)$ is interior, the endpoint $-1$ is boundary, and $\Sigma_2$ is radially filled.
\end{proposition}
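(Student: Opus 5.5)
The plan for $q=2$ is to replace the planar-interior machinery of Sections~\ref{sec:geometry}--\ref{sec:visibility} by a two-parameter computation on the single triangular face $\conv\{A,V_0,V_1\}=\Sq$. By Theorem~\ref{thm:balancing} and Proposition~\ref{prop:simplex}, $\Sigma_2$ is the union of the roots of
\[
 \chi(\lambda)=(\lambda^2-c)^2-(1-c)^2\bigl((1-t)+t\lambda\bigr),\qquad (c,t)\in[0,1]^2 .
\]
For $q=2$ the set $\F_2^+=\F_4\cap[1/4,1/2]$ is $\{1/4,1/3,1/2\}$. The lifts are $L_2(1/4)=4$, $L_2(1/3)=3$ and $L_2(1/2)=2$, so the carrier word is $(0,1,*)$. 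The two open cells therefore carry the base edge $V_0V_1$, with equation $\lambda^4=(1-t)+t\lambda$, and the lateral edge $AV_1$, with branch $\lambda^2-c=(1-c)\xi$ and $\xi^2=\lambda$. The first ray is $\mathcal I$, realized on $AV_0$ by \eqref{eq:spoke-factor}.

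\textbf{Verifying the curve formulas.} For the base edge I write $\lambda=x+iy$ with $y>0$. Taking the imaginary part of $\lambda^4-t\lambda=1-t$ gives $t=\Im(\lambda^4)/y$. Substituting this into the real part and dividing by the common factor should give $y^2=1+2x+3x^2$. I would then check that $0\le t\le1$ exactly on $-\tfrac12\le x\le0$; the endpoints are $\lambda=i$ (where $t=0$) and $\lambda=e^{2\pi i/3}$ (where $t=1$). For the lateral edge I would parametrize by $\xi=\sqrt\lambda$ in the selected sector $\Arg\xi\in(\pi/3,\pi/2)$. I eliminate $c$ from $\xi^4-c=(1-c)\xi$, i.e.\ I require $c=(\xi^4-\xi)/(1-\xi)=\xi(\xi^3-1)/(1-\xi)$ to be real. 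Squaring back to $\lambda$ should produce the nested radical in $\mathcal V$, with $x$ ranging over $[-1,-\tfrac12]$ between $e^{2\pi i/3}$ and $-1$. This is the even-terminal case: the carrier closes at $-1$, realized by $A$.

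\textbf{Identifying the boundary.} Here I use the transfer criterion directly, since $K_2(\lambda)=[1,\lambda]$ is a segment. A point $\lambda\in\D$ with $\lambda^2\neq1$ is spectral if and only if $\Gamma_\lambda(s)\in[1,\lambda]$ for some $s\in[1,s^*(\lambda)]$. Because $\Gamma_\lambda(s)$ is a parabola and $[1,\lambda]$ a segment, this is a finite intersection problem. Fix the ray $\Arg\lambda=2\pi x$ and increase the radius. The set of admissible intersections degenerates first either at $s=1$ with an interior segment point (the base carrier), or at an endpoint $\Gamma_\lambda(s)=\lambda$ with $1<s<s^*$ (the lateral carrier). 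A sign analysis of $\Im\bigl((\Gamma_\lambda(s)-1)\overline{(\lambda-1)}\bigr)$ should show that the base carrier is outermost for $1/4<x<1/3$ and the lateral carrier for $1/3<x<1/2$. The sector gap and $\mathcal I$ come from Proposition~\ref{prop:elementary}. The unit points $i$, $e^{2\pi i/3}$ and $-1$ come from Proposition~\ref{prop:unit-circle}.

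\textbf{Radial filling, the main obstacle.} Lemma~\ref{lem:persistence} and the connectedness argument of Proposition~\ref{prop:cell-filling} both use two-dimensional interiors of $K_q(\lambda)$, and these are unavailable here. I would instead argue topologically on the parameter square. Consider the continuous multivalued root map $(c,t)\mapsto\{\text{roots of }\chi\}$. Trace the image of $\partial[0,1]^2$ in the upper half-plane: the edge $c=0$ gives the base locus, the edge $t=1$ gives the lateral locus, $t=0$ gives the spoke skeleton $\lambda^2\in[-1,1]$, and $c=1$ collapses to $A$. Along these edges the roots sweep the closed curve $[0,1]\cup\mathcal I\cup\mathcal A\cup\mathcal V\cup[-1,0]$. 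For any $\lambda_0$ strictly inside this curve, the function $(c,t)\mapsto\chi(\lambda_0)$ maps $\partial[0,1]^2$ to a loop around $0$ with nonzero winding number, by the argument principle applied root by root. Hence $\chi(\lambda_0)=0$ for some interior $(c,t)$. The delicate part is checking the winding count, i.e.\ that no other root branch crosses into the region and cancels it; I would control this by tracking the four roots along the boundary using the conjugation symmetry. Finally, $(-1,0)$ is interior by Lemma~\ref{lem:endpoint-filling} applied at $x=1/2$ with the conjugate cell.
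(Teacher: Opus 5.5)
\paragraph{Where the proposal falls short.}
Your degree argument is correct as far as it goes, and it is a genuinely different route from the paper's. The edge $c=1$ is constant ($\chi=(\lambda^2-1)^2$). The root paths along $c=0$, $t=1$, $t=0$ concatenate into three pieces:
\begin{itemize}
\item one counterclockwise circuit of $\partial\Omega$, where $\Omega$ is the open region bounded by $[-1,0]\cup\mathcal I\cup\mathcal A\cup\mathcal V$;
\item one clockwise circuit of its conjugate;
\item a there-and-back run along the slit $[0,1]$, which is therefore not part of the enclosing curve.
\end{itemize}
So the degree on $\Omega$ is $+1$ and $\Omega\subset\Sigma_2$.

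The genuine gap is the reverse inclusion. A degree count cannot exclude anything, so the claim that nothing spectral lies beyond $\mathcal A\cup\mathcal V$ rests entirely on ``degenerates first \ldots\ a sign analysis should show,'' which is not an argument. The reduction to the two mechanisms $s=1$ and $\Gamma_\lambda(s)=\lambda$ is the trichotomy of Proposition~\ref{prop:maximal-trichotomy}. Its proof runs through Lemma~\ref{lem:persistence}, exactly the tool you note is unavailable for $q=2$.

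The missing idea is that for $q=2$ the nontrivial transfer parameter is unique. Since $(\Gamma_\lambda(s)-1)/(\lambda-1)=s(\lambda+1)\bigl(2+s(\lambda^2-1)\bigr)$, collinearity with $[1,\lambda]$ at $y>0$ forces $s=2/(1-2x-3x^2+y^2)$, hence an explicit $t$. With $Y=y^2$, membership becomes $x\le0$, $x^2+Y\le1$, $Y\le1+2x+3x^2$, $G_x(Y)\ge0$. The discriminant $64x^2(-2x-1)$ of $G_x$ then shows which constraint binds on each side of $x=-1/2$. That is the paper's proof, and it yields both inclusions at once.

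The same observation would also close your route. Continuity of the unique $(s,t)$ makes every point with $s>1$, $0<t<1$, $|\lambda|<1$ interior. Hence $\partial\Sigma_2$ in the open upper half-plane lies on $\mathcal I\cup\mathcal A\cup\mathcal V$, and a clopen argument in the connected set $\{\Im\lambda>0\}\setminus\overline\Omega$ excludes everything outside $\overline\Omega$.

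Radial filling needs one more input: each ray with $\pi/2<\theta<\pi$ must meet $\mathcal A\cup\mathcal V$ exactly once. This follows from strict monotonicity of \eqref{eq:base-radius} and \eqref{eq:lateral-radius}; the paper instead proves the identity \eqref{eq:q2-radial-G}. Your appeal to Lemma~\ref{lem:endpoint-filling} presupposes exactly these ray sections.

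\paragraph{The lateral branch.}
The branch you propose is the extraneous one. For the cell $1/3<x<1/2$ one has $\ell/L=1/3$ and $J=1$, so \eqref{eq:selected-root} gives $\xi=-\rho^{1/2}\e^{\pi ix}$, with $\Arg\xi\in(-2\pi/3,-\pi/2)$. On your sector, write $\xi=r\e^{i\phi}$ with $\pi/3<\phi<\pi/2$ and $0<r<1$. Then $c=-\xi(1+\xi+\xi^2)$ and
\[
 -\Im c=r\bigl(\sin\phi+r\sin2\phi+r^2\sin3\phi\bigr)>r(\sin\phi+\sin3\phi)=2r\sin2\phi\cos\phi>0,
\]
so requiring $c$ to be real yields no points at all. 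Already at $\lambda=\e^{2\pi i/3}$, $\xi=\e^{i\pi/3}$ gives $c=1-i\sqrt3$, while $\xi=\e^{-2\pi i/3}$ gives $c=0$. The repair is easy: use the sign $(-1)^\ell$ with $\ell=1$, or eliminate $c$ from the squared equation, which is all the degree argument needs. As written, however, this step fails.
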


\begin{proof}
Write $\lambda=x+iy$ with $y>0$. The transfer criterion is
\[
 \Gamma_\lambda(s)=(1-t)+t\lambda,
 \qquad
 1\le s\le s^*(\lambda),
 \qquad
 0\le t\le1.
\]
Since
\[
 \frac{\Gamma_\lambda(s)-1}{\lambda-1}
 =s(\lambda+1)\bigl(2+s(\lambda^2-1)\bigr),
\]
collinearity with the segment $[1,\lambda]$ is equivalent to
\[
 \Im\bigl((\lambda+1)(2+s(\lambda^2-1))\bigr)=0.
\]
Its imaginary part is
\[
 y\bigl(2+s(3x^2+2x-y^2-1)\bigr).
\]
Thus a nonreal witness has the unique transfer parameter
\begin{equation}\label{eq:q2-s}
 s=\frac{2}{1-2x-3x^2+y^2}.
\end{equation}
Substitution into the real segment coordinate gives
\begin{equation}\label{eq:q2-t}
 t=\frac{-8x\bigl((x+1)^2+y^2\bigr)}{(1-2x-3x^2+y^2)^2}.
\end{equation}
Put $Y=y^2$. The conditions $|\lambda|\le1$, $s\ge1$, and $0\le t\le1$ reduce to
\begin{equation}\label{eq:q2-conditions}
 x\le0,
 \qquad
 x^2+Y\le1,
 \qquad
 Y\le1+2x+3x^2,
 \qquad
 G_x(Y)\ge0,
\end{equation}
where
\begin{equation}\label{eq:q2-G}
 G_x(Y)=Y^2+(-6x^2+4x+2)Y+9x^4+20x^3+14x^2+4x+1.
\end{equation}

For $-1/2\le x\le0$, the discriminant of $G_x$ is
\[
 64x^2(-2x-1)\le0,
\]
so $G_x(Y)\ge0$ for all real $Y$. Also
\[
 1+2x+3x^2\le1-x^2
 \quad\Longleftrightarrow\quad
 2x(1+2x)\le0.
\]
Hence the upper boundary in this interval is
\[
 Y=1+2x+3x^2.
\]
On this boundary $s=1$, so
\[
 \lambda^4=(1-t)+t\lambda,
\]
which is the base-edge carrier for $1/4<1/3$.

For $-1\le x\le-1/2$, the quadratic factors as
\[
 G_x(Y)=(Y-Y_-(x))(Y-Y_+(x)),
\]
where
\[
 Y_-(x)=(x-1)(3x+1)+4x\sqrt{-2x-1},
\]
\[
 Y_+(x)=(x-1)(3x+1)-4x\sqrt{-2x-1}.
\]
With $w=\sqrt{-2x-1}\in[0,1]$, equivalently $x=-(1+w^2)/2$, direct simplification gives
\[
 Y_-(x)=\frac{(1-w)^2(3w^2-2w+3)}4\ge0,
\]
\[
 1-x^2-Y_-(x)=w(1-w)(w^2-w+2)\ge0,
\]
\[
 1+2x+3x^2-Y_-(x)=2w(w^2-w+1)\ge0,
\]
and
\[
 Y_+(x)-(1-x^2)=w(w+1)(w^2+w+2)\ge0.
\]
Therefore \eqref{eq:q2-conditions} is equivalent to
\[
 0\le Y\le Y_-(x).
\]
On the upper boundary $G_x(Y)=0$, hence $t=1$, and the transfer equation becomes
\[
 \Gamma_\lambda(s)=\lambda,
\]
or equivalently
\[
 (\lambda^2-c)^2=(1-c)^2\lambda,
 \qquad
 c=1-\frac1s.
\]
This is the lateral-edge carrier for $1/3<1/2$.

These Cartesian arcs agree ray by ray with the operational formulas in Section~\ref{subsec:radius-formulas}. Indeed, $\F_2^+=\{1/4,1/3,1/2\}$: the first cell has lifts $4$ and $3$ and is governed by \eqref{eq:base-radius}, while the second has lifts $3$ and $2$ and is governed by \eqref{eq:lateral-radius}. The strict monotonicity of the two scalar radius equations, equivalently the uniqueness part of Theorem~\ref{thm:chord}, gives one selected radius on each ray. Hence the displayed Cartesian boundary is the same outer trace $\varrho_2$, not merely another branch of the same edge equations.

At $x=0$, membership is exactly $0\le y\le1$, the first ray. There are no upper-half-plane points with $x>0$, and the real section is $[-1,1]$.

It remains to verify radial filling. Suppose $(x,Y)$ satisfies \eqref{eq:q2-conditions} and let $0\le\rho\le1$. The disk constraint and $\rho x\le0$ are immediate. For the $s\ge1$ inequality,
\[
 1+2\rho x+3\rho^2x^2-\rho^2Y
 =\rho^2(1+2x+3x^2-Y)+(1-\rho)(1+\rho+2\rho x)\ge0.
\]
For the final inequality, direct expansion gives
\begin{equation}\label{eq:q2-radial-G}
 G_{\rho x}(\rho^2Y)
 =\rho^4G_x(Y)+(1-\rho)(1+\rho+2\rho x)Q_\rho(x,Y),
\end{equation}
where
\[
 Q_\rho(x,Y)=2\rho^2Y+10\rho^2x^2+2\rho(\rho+1)x+\rho^2+1.
\]
For $\rho>0$,
\[
 Q_\rho(x,Y)
 =2\rho^2Y
 +10\rho^2\left(x+\frac{\rho+1}{10\rho}\right)^2
 +\frac{9\rho^2-2\rho+9}{10}>0,
\]
and the case $\rho=0$ is immediate. Since $1+\rho+2\rho x\ge1-\rho\ge0$, \eqref{eq:q2-radial-G} is nonnegative. Thus every smaller point on a spectral ray is spectral. The displayed arcs are the complete upper boundary, $(-1,0)$ is interior, $-1$ is boundary, and conjugation completes the proof.
\end{proof}

\section*{Acknowledgments}
Vincent Ginis acknowledges support from the Research Foundation--Flanders (FWO) under grants No.~G032822N and G0K9322N.

\section*{Declaration of generative AI and AI-assisted technologies in the manuscript preparation process}
During the preparation of this work, the authors used ChatGPT (OpenAI) and Fable for brainstorming, organizing the exposition, and substantial assistance in drafting and revising the manuscript's prose. After using these tools, the authors reviewed and edited the material as needed and take full responsibility for the mathematical content, citations, and final text of the publication.

\end{document}